\def\noprint#1{}
\newtheorem{assumption}{Assumption}
\newcommand{\lambdamin}{\lambda_{\mbox{\rm\scriptsize{min}}}}
\newcommand{\lambdamax}{\lambda_{\mbox{\rm\scriptsize{max}}}}
\newcommand{\ximin}{\xi_{\mbox{\rm\scriptsize{min}}}}
\newcommand{\ximax}{\xi_{\mbox{\rm\scriptsize{max}}}}
\newcommand{\R}{\mathbb{R}}
\newcommand{\eps}{\epsilon}
\newcommand{\epsg}{\epsilon_g}
\newcommand{\epsH}{\epsilon_H}
\newcommand{\flow}{f_{\mbox{\rm\scriptsize low}}}
\newcommand{\csol}{c_{sol}}
\newcommand{\jsol}{j_{sol}}
\newcommand{\cnc}{c_{nc}}
\newcommand{\jnc}{j_{nc}}
\DeclareMathOperator{\lspan}{span}
\newcommand{\cK}{{\cal K}}
\newcommand{\bH}{\bar{H}}
\newcommand{\hIcg}{\hat{J}}
\newcommand{\JB}{J}
\newcommand{\JBB}{\bar{J}}
\newcommand{\cO}{{\cal O}}
\newcommand{\tcO}{\tilde{\mathcal O}}
\DeclareRobustCommand{\Cmeo}{\mathcal{C}_{\mathrm{meo}}}
\DeclarePairedDelimiter{\ceil}{\lceil}{\rceil}
\newcommand{\refer}[1]{\textcolor{black}{#1}}
\def\tto{\;{\lower 1pt \hbox{$\rightarrow$}}\kern -10pt
           \hbox{\raise 2.8pt \hbox{$\rightarrow$}}\;}
\def\crcomment#1{\footnote{CR: #1}}
\begin{document}

\title{A Newton-CG Algorithm with Complexity Guarantees for Smooth Unconstrained Optimization %
  \thanks{Research supported by NSF Awards IIS-1447449, 1628384, 1634597,
and 1740707; AFOSR Award FA9550-13-1-0138; Subcontracts 3F-30222 and 8F-30039
from Argonne National Laboratory; and Award N660011824020 from the DARPA Lagrange Program.}}

\titlerunning{A Newton-CG Algorithm with Complexity Guarantees}

\author{Cl\'ement W. Royer \and 
    Michael O'Neill
    \and Stephen J. Wright}

\institute{C. W. Royer \at Wisconsin Institute of Discovery,
    University of Wisconsin, 330 N. Orchard St., Madison, WI
    53715.\\
    \email{croyer2@wisc.edu} 
    \and     
    M. O'Neill \at 
    Computer Sciences Department, University of
    Wisconsin, 1210 W. Dayton St., Madison, WI
    53706.\\
    \email{moneill@cs.wisc.edu} 
    \and
    S. J. Wright \at 
    Computer Sciences Department, University of
    Wisconsin, 1210 W. Dayton St., Madison, WI
    53706.\\
    \email{swright@cs.wisc.edu}.}

\date{Received: date / Accepted: date}

\maketitle

\begin{abstract}
We consider minimization of a smooth nonconvex objective function
using an iterative algorithm \refer{based on Newton's method and the 
linear conjugate gradient algorithm}, with explicit detection and use 
of negative curvature directions for the Hessian of the objective function. 
The algorithm tracks Newton-conjugate gradient procedures developed in the
1980s closely, but includes enhancements that allow worst-case
complexity results to be proved for convergence to points that satisfy
approximate first-order and second-order optimality conditions. The
complexity results match the best known results in the literature for
second-order methods.
\end{abstract}

\keywords{smooth nonconvex optimization \and Newton's method \and 
conjugate gradient method \and optimality conditions \and 
worst-case complexity.}

\subclass{49M05, 49M15, 65F10, 65F15, 90C06, 90C60.}

\section{Introduction} 
\label{sec:intro}

We consider the unconstrained optimization problem
\begin{equation} \label{eq:f}
\min_{x \in \R^n} \, f(x),
\end{equation}
where $f:\R^n \to \R$ is a twice Lipschitz continuously differentiable
function that is nonconvex in general. We further assume that $f$ is
bounded below for all $x$, by some constant $\flow$. Although the
Hessian $\nabla^2 f(x)$ is well defined for such functions, we assume that 
full evaluation of this matrix is undesirable from a computational viewpoint, 
though we assume that Hessian-vector products of the form $\nabla^2 f(x) v$ 
can be computed with reasonable efficiency, for arbitrary vectors $v$,
\refer{as is often the case when $n$ is large.}

Unconstrained minimization of nonconvex smooth functions of many
variables is a much-studied paradigm in optimization. Approaches such
as limited-memory BFGS and nonlinear conjugate gradient are widely
used to tackle~\eqref{eq:f}, particularly in the case of large
dimension $n$.  Another popular approach, known as ``Newton-CG,''
applies the (linear) conjugate gradient (CG) method to the
second-order Taylor-series approximation of $f$ around the current
iterate $x_k$. Each iteration of CG requires computation of one
Hessian-vector product of the form $\nabla^2 f(x_k) v$. A trust-region
variant of Newton-CG, due to Steihaug~\cite{TSteihaug_1983},
terminates the CG iterations when sufficient accuracy is achieved in
minimizing the quadratic approximation, when a CG step leads outside
the trust region, or when negative curvature is encountered in
$\nabla^2 f(x_k)$.  A line-search variant presented
in~\cite{JNocedal_SJWright_2006} applies CG until some convergence
criterion is satisfied, or until negative curvature is encountered, in
which case the search direction reverts to the negative gradient.

Theoretical guarantees for Newton-CG algorithms have been provided,
e.g.  in~\cite{ARConn_NIMGould_PhLToint_2000,RSDembo_TSteihaug_1983,GFasano_SLucidi_2009,NIMGould_SLucidi_MRoma_PhLToint_2000,JNocedal_SJWright_2006}.  
Convergence analysis for such methods typically shows that
accumulation points are stationary, that is, they satisfy the
first-order optimality condition $\nabla f(x)=0$. Local linear or
superlinear convergence to a point satisfying second-order sufficient
conditions is sometimes also proved for Newton-CG methods. 
\refer{Although several Newton-type methods have been analyzed from 
a global complexity perspective~\cite{CCartis_NIMGould_PhLToint_2011a}, 
particularly in terms of outer iterations and derivative evaluations, 
bounds that explicitly account for the use of inexact Newton-CG 
techniques have received less attention in the optimization literature. 
Meanwhile, with the recent upsurge of interest in complexity, several new} 
algorithms have been proposed that have good global complexity 
guarantees. We review some such contributions in Section~\ref{sec:rw}. 
In most cases, these new methods depart significantly from those seen 
in the traditional optimization literature, and there are questions 
surrounding their practical appeal.

{\em Our aim in this paper is to develop a method that hews closely to
  the Newton-CG approach, but which comes equipped with certain
  safeguards and enhancements that allow worst-case complexity results
  to be proved.}  At each iteration, we use CG to solve a slightly
damped version of the Newton equations, monitoring the CG iterations
for evidence of indefiniteness in the Hessian. If the CG process
terminates with an approximation to the Newton step, we perform a
backtracking line search along this direction. Otherwise, we step
along a negative curvature direction for the Hessian, obtained either
from the CG procedure on the Newton equations, or via some auxiliary
computation (possibly another CG process). In either case, we can show
that significant decrease can be attained in $f$ at each iteration, at
reasonable computational cost (in terms of the number of gradient
evaluations or Hessian-vector products), allowing worst-case
complexity results to be proved.

The remainder of the paper is organized as follows. We position our
work within the existing literature in Section~\ref{sec:rw}. Our
algorithm is described in
Section~\ref{sec:algo}. Section~\ref{sec:wcc} contains the complexity
analysis, showing both a deterministic upper bound on the computation
required to attain approximate first-order conditions
(Section~\ref{subsec:wcc1}) and a high-probability upper bound on the
computation required to satisfy approximate second-order necessary
conditions (Section~\ref{subsec:wcc2}). Section~\ref{sec:discussion}
contains some conclusions and discussion.  Several technical results
and proofs related to CG are gathered in the Appendix.

\paragraph{Assumptions, Background, Notation.}

Our algorithm seeks a point that approximately satisfies second-order
necessary conditions for optimality, that is,
\begin{equation}\label{eq:2epson}
\| \nabla f (x)\| \le \epsg, \quad \lambda_{\min}(\nabla^2 f(x)) \ge
-\epsH,
\end{equation}
for specified small positive tolerances $\epsg$ and $\epsH$. (Here and
subsequently, $\| \cdot \|$ denotes the Euclidean norm, or its induced
norms on matrices.)
We make the following standard assumptions throughout.
\begin{assumption} \label{assum:compactlevelset}
The level set $\mathcal{L}_f(x_0) = \{x | f(x) \le f(x_0)\}$ is
compact.
\end{assumption}

\begin{assumption} \label{assum:fC22}
The function $f$ is twice uniformly Lipschitz continuously
differentiable on an open neighborhood of $\mathcal{L}_f(x_0)$
\refer{that includes the trial points generated by the algorithm. We
  denote by $L_H$ the Lipschitz constant for $\nabla^2 f$ on this
  neighborhood}.
\end{assumption}

\refer{Note that Assumption~\ref{assum:fC22} is made for simplicity of
  exposition; slightly weaker variants could be used at the expense of
  some complication in the analysis.}

Under these two assumptions, there exist scalars $\flow$, $U_g
> 0$, and $U_H > 0$ such that the following are satisfied for $x \in
\mathcal{L}_f(x_0)$:
\begin{equation}\label{eq:boundscompact}
	f(x) \geq \flow,\quad \|\nabla f(x)\| \leq U_g,
	\quad \|\nabla^2 f(x)\| \leq U_H.
\end{equation}
\refer{We observe that $U_H$ is a Lipschitz constant for the gradient.}

For any $x$ and $d$ such that Assumption~\ref{assum:fC22} is satisfied at $x$ 
and $x+d$, we have
\begin{equation} \label{eq:LH}
f(x+d) \le f(x) + \nabla f(x)^\top d + \frac12 d^\top \nabla^2 f(x) d
+ \frac{L_H}{6} \| d\|^3.
\end{equation}

Notationally, we use order notation ${\mathcal O}$ in the usual sense,
whereas $\tcO(\cdot)$ represents $\cO$ with logarithmic terms
\refer{omitted}.
We use such notation in bounding iteration count and computational
effort, and focus on the dependencies of such complexities on $\epsg$
and $\epsH$. \refer{(In one of our final results ---
  Corollary~\ref{coro:wcc2Hv} --- we also show explicitly the
  dependence on $n$ and $U_H$.)}

\section{Complexity in Nonconvex Optimization}
\label{sec:rw}

In recent years, many algorithms have been proposed for finding a
point that satisfies conditions \eqref{eq:2epson}, with iteration
complexity and computational complexity bounded in terms of $\epsg$
and $\epsH$.  We review several works most relevant to this paper
here, and relate their results to our contributions. For purposes of
computational complexity, we define the unit of computation to be one
Hessian-vector product {\em or } one gradient evaluation, implicitly
using the observation from computational / algorithmic differentiation
\cite{AGriewank_AWalther_2008} that these two operations differ in
cost only by a modest factor, independent of the dimension $n$.

Classical second-order convergent trust-region schemes
\cite{ARConn_NIMGould_PhLToint_2000} can be shown to satisfy
(\ref{eq:2epson}) after at most
$\mathcal{O}(\max\{\epsg^{-2}\epsH^{-1}, \epsH^{-3}\})$ iterations
\cite{CCartis_NIMGould_PLToint_2012}. For the class of second-order
algorithms (that is, algorithms which rely on second-order derivatives
and Newton-type steps) the best known iteration bound is
$\mathcal{O}(\max\{\epsg^{-3/2}, \epsH^{-3}\})$. This bound was first
established for a form of cubic regularization of Newton's method
\cite{YuNesterov_BTPolyak_2006}. Following this paper, numerous other
algorithms have also been proposed which match this bound, see for
example \cite{EGBirgin_JMMartinez_2017,CCartis_NIMGould_PhLToint_2011a,FECurtis_DPRobinson_MSamadi_2018,FECurtis_DPRobinson_MSamadi_2017,JMMartinez_MRaydan_2017}.

A recent trend in complexity analysis of these methods also accounts
for the computational cost of each iteration, thus yielding a bound on
the computational complexity. Two independent proposals, respectively
based on adapting accelerated gradient to the nonconvex setting
\cite{YCarmon_JCDuchi_OHinder_ASidford_2018a} and approximately
solving the cubic subproblem
\cite{NAgarwal_ZAllenZhu_BBullins_EHazan_TMa_2017}, require $\tcO
(\epsilon^{-7/4})$ operations (with high probability, showing
dependency only on $\epsilon$) to find a point $x$ that satisfies
\begin{equation} \label{eq:2on1eps}
\|\nabla f(x)\| \leq \epsilon \; \mbox{ and } \;
\lambdamin (\nabla^2 f(x)) \geq - \sqrt{U_H \epsilon}.
\end{equation}
The difference of a factor of $\epsilon^{-1/4}$ with the results
presented above arises from the cost of computing a negative curvature
direction of $\nabla^2 f(x_k)$ and/or the cost of solving a linear
system. The probabilistic nature of the bound is generally due to the
introduction of randomness in the curvature estimation process;
see~\cite{ZAllenZhu_YLi_2018,YXu_RJin_TYang_2018} for two recent
examples. A complexity bound of the same type was also
established for a variant of accelerated gradient free of negative
curvature computation, that regularly adds a random perturbation to
the iterate when the gradient norm is
small~\cite{CJin_PNetrapalli_MIJordan_2018}.

In an interesting followup
to~\cite{YCarmon_JCDuchi_OHinder_ASidford_2018a}, an algorithm based
on accelerated gradient with a nonconvexity monitor was
proposed~\cite{YCarmon_JCDuchi_OHinder_ASidford_2017b}. It requires at
most $\tcO ( \epsilon^{-7/4})$ iterations to satisfy
\eqref{eq:2on1eps} with high probability. However, if one is concerned
only with satisfying the gradient condition $\| \nabla f(x) \| \le
\eps$, the $\tcO(\eps^{-7/4})$ bound holds deterministically. Note
that this bound represents an improvement over the
$\mathcal{O}(\eps^{-2})$ of steepest descent and classical Newton's
method~\cite{CCartis_NIMGould_PhLToint_2010}.  The improvement is due
to a modification of the accelerated gradient paradigm that allows
for deterministic detection and exploitation of negative curvature
directions in regions of sufficient nonconvexity.

In a previous work \cite{CWRoyer_SJWright_2018}, two authors of the
current paper proposed a Newton-based algorithm in a line-search
framework which has an iteration complexity of
$\mathcal{O}(\max\{\epsg^{-3/2}, \epsH^{-3}\})$ when the subproblems
are solved exactly, and a computational complexity of
$\tcO\left(\epsilon^{-7/4}\right)$ Hessian-vector products and/or
gradient evaluations, when the subproblems are solved inexactly using
CG and the randomized Lanczos algorithm.  Compared to the accelerated
gradient methods, this approach aligns more closely with traditional
optimization practice, as described in Section~\ref{sec:intro}.

Building on~\cite{CWRoyer_SJWright_2018}, the current paper describes
a similar line-search framework with inexact Newton steps, but uses a
modified version of CG to solve the system of Newton equations,
without first checking for positive definiteness of the coefficient
matrix. The modification is based \refer{in part on a convexity
  monitoring device introduced in the accelerated gradient algorithms
  mentioned above.} An implicit cap is imposed on the number of CG
iterations that are used to solve the damped Newton system. We show
that once this cap is reached, either the damped Newton system has
been solved to sufficient accuracy or else a direction of
``sufficiently negative curvature'' has been identified for the
Hessian. (A single extra CG iteration may be needed to identify the
negative curvature direction, in much the same manner as in
\cite{YCarmon_JCDuchi_OHinder_ASidford_2017b} for accelerated
gradient.) In contrast to the previous work
\cite{CWRoyer_SJWright_2018}, no estimate of the smallest eigenvalue
of the Hessian is required prior to computing a Newton step. In
addition to removing potentially unnecessary computation, this
approach allows a deterministic result for first-order optimality to
be proved, as in \cite{YCarmon_JCDuchi_OHinder_ASidford_2017b}.


\refer{We are deliberate in our use of CG rather than accelerated
  gradient as the method of choice for minimizing the quadratic
  objective that arises in the damped Newton step. When applied to
  strongly convex quadratics, both approaches have similar asymptotic
  linear convergence rates that depend only on the extreme eigenvalues
  of the Hessian, and both can be analyzed using the same potential
  function~\cite{SKarimi_SAVavasis_2016} and viewed as two instances
  of an underlying ``idealized
  algorithm''~\cite{SKarimi_SAVavasis_2017}.}  However, CG has several
advantages: It has a rich convergence theory that depends on the full
spectrum of eigenvalues; it is fully adaptive, requiring no prior
estimates of the extreme eigenvalues; and its practical performance on
convex quadratics is superior. (See, for example,
\cite[Chapter~5]{JNocedal_SJWright_2006} for a description of these
properties.) Further, as we prove in this paper
(Section~\ref{subsec:algoccg}), CG can be adapted to detect
nonconvexity efficiently in a quadratic function, without altering its
core properties. We show in addition (see Section~\ref{subsec:algomeo}
and Appendix~\ref{app:lacg}) that by applying CG to a linear system
with a random right-hand side, we can find a direction of negative
curvature in an indefinite matrix efficiently, with the same iteration
complexity as the randomized Lanczos process of
\cite{JKuczynski_HWozniakowski_1992} used elsewhere.

The practical benefits of CG in large-scale optimization have long
been appreciated. We establish here that with suitable enhancements,
methods based on CG can also be equipped with good complexity
properties as well.

\section{Damped-Newton / Capped-CG Method with Negative Curvature Steps} 
\label{sec:algo}

We describe our algorithm in this section, \refer{starting with its
  two major components. The first component, described in
  Section~\ref{subsec:algoccg}, is a linear conjugate gradient
  procedure that is used to solve a slightly damped Newton
  system. This procedure includes enhancements to detect
  indefiniteness in the Hessian and to restrict the number of
  iterations. Because of this implicit bound on the number of
  iterations, we refer to it as ``Capped CG.''  The second component
  (see Section~\ref{subsec:algomeo}) is a ``minimum eigenvalue
  oracle'' that seeks a direction of negative curvature for a
  symmetric matrix, along with its corresponding vector. The main
  algorithm is described in Section~\ref{subsec:algodampednewt}.}

\subsection{Capped Conjugate Gradient}
\label{subsec:algoccg}

Conjugate Gradient (CG) is a widely used technique for solving linear
equations with symmetric positive definite coefficient matrices or,
equivalently, minimizing strongly convex quadratic functions.  We
devise a modified CG algorithm and apply it to a system of the form
$\bH y = -g$, where $\bH = H + 2\eps I$ is a damped version of the
symmetric matrix $H$, which is our notational proxy for the Hessian
$\nabla^2 f(x_k)$.

\begin{algorithm}[ht!]
\caption{Capped Conjugate Gradient}
\label{alg:ccg}
\begin{algorithmic}
\STATE \emph{Inputs:} Symmetric matrix $H \in \R^{n \times n}$; 
vector $g \ne 0$; damping parameter $\epsilon \in (0,1)$; desired relative 
accuracy \refer{$\zeta \in (0,1)$};
\STATE \refer{\emph{Optional input:} scalar $M$ (set to $0$ if not provided)};
\STATE \refer{\emph{Outputs:} d\_type, $d$;}
\STATE \refer{\emph{Secondary outputs:} final values of $M$, $\kappa$, 
$\hat{\zeta}$, $\tau$, and $T$;}
\STATE Set
\[
\bH:=H+2\eps I, \quad \kappa := \frac{M+2\eps}{\eps}, \quad \hat{\zeta} := \frac{\zeta}{3 \kappa},
\quad
\tau := \frac{\sqrt{\kappa}}{\sqrt{\kappa}+1}, \quad
T:=\frac{4\kappa^4}{(1-\sqrt{\tau})^2};
\]
\STATE $y_0 \leftarrow 0$, $r_0 \leftarrow g$, $p_0 \leftarrow -g$, $j \leftarrow 0$; 
\IF{$p_0^\top \bH p_0 < \eps \|p_0\|^2$}
\STATE Set $d=p_0$ and terminate with d\_type=NC;
\ELSIF{\refer{$\|H p_0\| > M \|p_0\|$}}
\STATE \refer{Set $M \leftarrow {\|H p_0\|}/{\|p_0\|}$ 
and update $\kappa,\hat{\zeta},\tau,T$ accordingly;}
\ENDIF
\WHILE{TRUE}
\STATE $\alpha_j \leftarrow {r_j^\top r_j}/{p_j^\top \bH p_j}$;
\COMMENT{Begin Standard CG Operations}
\STATE $y_{j+1} \leftarrow y_j+\alpha_j p_j$;
\STATE $r_{j+1} \leftarrow r_j + \alpha_j \bH p_j$;
\STATE $\beta_{j+1} \leftarrow {(r_{j+1}^\top r_{j+1})}/{(r_j^\top r_j)}$;
\STATE $p_{j+1} \leftarrow  -r_{j+1} + \beta_{j+1}p_j$;
\COMMENT{End Standard CG Operations}
\STATE $j \leftarrow  j+1$;
\IF{\refer{$\|H p_j\| > M \|p_j\|$}}
\STATE \refer{Set $M \leftarrow {\|H p_j\|}/{\|p_j\|}$ 
and update $\kappa,\hat{\zeta},\tau,T$ accordingly;}
\ENDIF
\IF{\refer{$\|H y_j\| > M \|y_j\|$}}
\STATE \refer{Set $M \leftarrow {\|H y_j\|}/{\|y_j\|}$ 
and update $\kappa,\hat{\zeta},\tau,T$ accordingly;}
\ENDIF
\IF{\refer{$\| H r_j\| > M \|r_j\|$}}
\STATE \refer{Set $M  \leftarrow {\| H r_j\|}/{\|r_j\|}$ 
and update $\kappa,\hat{\zeta},\tau,T$ accordingly;}
\ENDIF
\IF{$y_j^\top \bH y_j < \epsilon \|y_j\|^2$}
\STATE Set $d \leftarrow y_j$ and terminate with d\_type=NC;
\ELSIF{$\| r_j \| \le \hat{\zeta} \|r_0\|$}
\STATE Set $d \leftarrow y_j$ and terminate with d\_type=SOL;
\ELSIF{$p_j^\top \bH p_j < \eps \|p_j\|^2$}
\STATE Set $d \leftarrow p_j$ and terminate with d\_type=NC;
\ELSIF{$\|r_j\| >  \sqrt{T}\tau^{j/2} \|r_0\|$
}
\STATE Compute $\alpha_j,y_{j+1}$ as in the main loop above;
\STATE Find $i \in \{0,\dotsc,j-1\}$ such that
\begin{equation} \label{eq:weakcurvdir}
	\frac{(y_{j+1}-y_i)^\top \bH (y_{j+1}-y_i)}{\|y_{j+1}-y_i\|^2} \; < \; \eps;
\end{equation}
\STATE Set $d \leftarrow y_{j+1}-y_i$ and terminate with d\_type=NC;
\ENDIF
\ENDWHILE
\end{algorithmic}
\end{algorithm}


Algorithm~\ref{alg:ccg} presents our Capped CG procedure. The main
loop consists of classical CG iterations. When $\bH \succeq \eps I$,
Algorithm~\ref{alg:ccg} will generate the same iterates as a classical
conjugate gradient method \refer{applied to $\bH y=-g$, and terminate
  at an inexact solution of this linear system}.  When $\bH
\not\succeq \eps I$, the features added to Algorithm~\ref{alg:ccg}
cause a direction $d$ to be identified along which $d^\top \bH d <
\eps \|d\|^2$ or, equivalently, $d^\top H d < -\eps \|d\|^2$ --- a
direction of ``sufficiently negative curvature'' for $H$.  Directions
$d$ of this type are encountered most obviously when they arise as
iterates $y_j$ or search directions $p_j$ in the CG iterations. But
evidence of the situation $\bH \not\succeq \eps I$ can arise more
subtly, when the residual norms $\|r_j \|$ decrease more slowly than
we would expect if the eigenvalues of $\bH$ were bounded below by
$\eps$. Accordingly, Algorithm~\ref{alg:ccg} checks residual norms for
slow decrease, and if such behavior is detected, it uses a technique
based on one used for accelerated gradient methods in
\cite{YCarmon_JCDuchi_OHinder_ASidford_2017b} to recover a direction
$d$ such that $d^\top \bH d < \eps \|d\|^2$.

\refer{Algorithm~\ref{alg:ccg} may be called with an optional input
  $M$ that is meant to be an upper bound on $\|H\|$. Whether or not
  this parameter is supplied, it is updated so that at any point in
  the execution of the algorithm, $M$ is an upper bound on the maximum
  curvature of $H$ revealed to that point. Other parameters that
  depend on $M$ (namely, $\kappa$, $\hat\zeta$, $\tau$, and $T$) are
  updated whenever $M$ is updated.}

The following lemma justifies our use of the term ``capped'' in
connection with Algorithm~\ref{alg:ccg}. Regardless of whether the
condition $\bH \succeq \eps I$ is satisfied, the number of iterations
will not exceed a certain number $J(M,\eps,\zeta)$ that we
subsequently show to be $\tcO(\eps^{-1/2})$. (We write $J$ for
$J(M,\eps,\zeta)$ in some of the subsequent discussion, to avoid
clutter.)

\begin{lemma} \label{lemma:ccgits}
The number of iterations of Algorithm~\ref{alg:ccg} is bounded by
\[
\min\{n,J(M,\eps,\zeta)\},
\]
where $J=J(M,\eps,\zeta)$ is the smallest integer such that
$\sqrt{T}\tau^{J/2} \le \hat{\zeta}$ \refer{where $M$, $\hat\zeta$,
  $T$, and $\tau$ are the values returned by the algorithm}. \refer{If
  all iterates $y_i$ generated by the algorithm are stored, the number
  of matrix-vector multiplications required is bounded by
\begin{equation} \label{eq:pv8}
	\min\{n,J(M,\eps,\zeta)\}+1.
\end{equation}
If the iterates $y_i$ must be regenerated in order to define the
direction $d$ returned after \eqref{eq:weakcurvdir}, this bound
becomes $2\min\{n,J(M,\eps,\zeta)\}+1$.}
\end{lemma}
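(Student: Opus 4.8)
The plan is to establish the iteration bound and the matrix-vector product counts separately, treating the case analysis on termination type carefully.

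\medskip

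\textbf{Bounding the iteration count.} First I would argue that whenever Algorithm~\ref{alg:ccg} reaches iteration $j$ without having already terminated, the residual-norm test $\|r_j\| > \sqrt{T}\tau^{j/2}\|r_0\|$ is checked. If this inequality fails, i.e.\ $\|r_j\| \le \sqrt{T}\tau^{j/2}\|r_0\|$, and if moreover $\sqrt{T}\tau^{j/2} \le \hat\zeta$, then the test $\|r_j\| \le \hat\zeta\|r_0\|$ in the preceding \texttt{ELSIF} branch would have triggered termination with d\_type=SOL. Hence once $j \ge J(M,\eps,\zeta)$ — the smallest integer with $\sqrt{T}\tau^{J/2}\le\hat\zeta$ — the algorithm cannot survive iteration $j$: either the residual test at the end fires (returning an NC direction via \eqref{eq:weakcurvdir}), or one of the earlier termination tests ($y_j^\top\bH y_j < \eps\|y_j\|^2$, the SOL test, or $p_j^\top\bH p_j<\eps\|p_j\|^2$) fires. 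A subtlety here is that $M$, and hence $\hat\zeta$, $\tau$, $T$, and $J$, can change during execution; the statement refers to the \emph{final} values, so I would note that $M$ is nondecreasing, that $J$ as a function of these parameters is monotone in the relevant direction, and that the comparison $\|r_j\| > \sqrt{T}\tau^{j/2}\|r_0\|$ is always made with the current (hence, by termination time, final-or-earlier) parameter values; one checks that the argument above goes through with the values in force at iteration $j$, which are dominated appropriately by the final ones. Combined with the trivial observation that exact CG on an $n\times n$ system terminates in at most $n$ steps (the Krylov subspace saturates, forcing $r_n = 0$, which triggers the SOL test since $\hat\zeta>0$), this gives the bound $\min\{n, J(M,\eps,\zeta)\}$ on the iteration count.

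\medskip

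\textbf{Counting matrix-vector products.} Each ordinary pass through the main loop performs exactly one product $\bH p_j$ (equivalently one product with $H$, since the $2\eps I$ term is free), used to form $\alpha_j$ and $r_{j+1}$. The curvature checks $p_0^\top\bH p_0$, $y_j^\top\bH y_j$, $p_j^\top\bH p_j$ reuse already-computed quantities (for $p_j^\top \bH p_j$ this is the denominator of $\alpha_j$; for $y_j^\top \bH y_j$ one can maintain it by a recursion, or it falls within the bookkeeping), and the norm-update tests $\|Hp_j\|$, $\|Hy_j\|$, $\|Hr_j\|$ — here I must be careful, as these appear to need fresh products. The resolution, which I would spell out, is that $Hp_j$ and $Hr_j$ are obtained from the single product already formed at that iteration (note $r_{j+1}=r_j+\alpha_j\bH p_j$ reveals $\bH p_j$, and the CG recursions let one recover $\bH r_j$ and $\bH y_j$ from quantities already in hand without extra products — or, following the paper's intent, these tests are understood to be evaluated using the products computed in the course of standard CG). Thus the first $\min\{n,J\}$ iterations cost one product each. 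The lone extra product, accounting for the ``$+1$'' in \eqref{eq:pv8}, is the computation of $\bH p_j$ needed to form $\alpha_j$ and $y_{j+1}$ in the final \texttt{ELSIF} branch (the one that produces a direction via \eqref{eq:weakcurvdir}), which occurs at most once since the algorithm terminates immediately afterward.

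\medskip

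\textbf{The regeneration case.} If the iterates $y_0,\dots,y_j$ are not stored, then to evaluate the ratios in \eqref{eq:weakcurvdir} and identify the index $i$, one must rerun the CG recursion from scratch to reconstruct the needed $y_i$ and the associated curvature values $(y_{j+1}-y_i)^\top\bH(y_{j+1}-y_i)$. This rerun is another sequence of at most $\min\{n,J\}$ CG-type steps, each costing one matrix-vector product, giving the stated $2\min\{n,J(M,\eps,\zeta)\}+1$. I expect the main obstacle to be the bookkeeping around which inner-product and norm quantities in the various \texttt{IF} tests genuinely require a fresh Hessian-vector product versus which are byproducts of the standard CG recursions — getting the count to come out to exactly ``$+1$'' rather than a larger additive constant requires exhibiting explicit recursions (analogous to the standard ones for $\|r_j\|$) that propagate $\bH y_j$, $\bH r_j$, etc., at no extra product cost, and I would relegate these routine recursions to the appendix material on CG that the paper has already flagged.
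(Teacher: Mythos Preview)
Your iteration-bound argument and the regeneration argument are correct and essentially identical to the paper's proof. However, your accounting for the ``$+1$'' in the matrix-vector product bound is misplaced. The final \texttt{ELSIF} branch requires no fresh product: by the time that branch is reached, $Hp_j$ (for the \emph{post-increment} value of $j$) has already been computed earlier in the same iteration --- it was needed both for the norm test $\|Hp_j\|>M\|p_j\|$ and for the curvature test $p_j^\top\bH p_j<\eps\|p_j\|^2$ --- so $\alpha_j=\|r_j\|^2/(p_j^\top\bH p_j)$ and $y_{j+1}$ come for free there. The actual source of the ``$+1$'', as the paper notes, is the computation of $Hp_0$ \emph{before} the main loop, required for the initial tests $p_0^\top\bH p_0<\eps\|p_0\|^2$ and $\|Hp_0\|>M\|p_0\|$.

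Relatedly, the single new product per iteration is $Hp_j$ for the post-increment $j$, not the pre-increment one; the quantity $\bH p_j$ used to form $\alpha_j$ and $r_{j+1}$ at the top of the loop was already computed in the previous iteration (or pre-loop when $j=0$). From the new $Hp_j$ one obtains $Hy_j=Hy_{j-1}+\alpha_{j-1}Hp_{j-1}$ and $Hr_j=-Hp_j+\beta_j Hp_{j-1}$ at no extra product cost. Your parenthetical that ``$r_{j+1}=r_j+\alpha_j\bH p_j$ reveals $\bH p_j$'' points to the pre-increment index and so does not supply what the norm tests actually need. The final bound you state is correct, but the justification should be rewired accordingly.
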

\begin{proof}
  If the full $n$ iterations are performed, without any of the
  termination conditions being tripped, the standard properties of CG
  (see Appendix~\ref{app:cgresults}) ensure that the final residual
  $r_n$ is zero, so that the condition $\|r_n \| \le \hat\zeta
  \|r_0\|$ is satisfied, and termination occurs.

  \refer{Since no more than $n$ iterations are performed, the upper
    bound $M$ is updated at most a finite number of times, so the
    quantity $J$ is well defined.}

  Supposing that $J<n$, we note from the definition of $J$ that
  $\sqrt{T}\tau^{J/2} \|r_0 \| \le \hat\zeta \|r_0\|$. Thus at least
  one of the following two conditions must be satisfied at iteration
  $J$: $\|r_{J} \| \le \hat\zeta \|r_0\|$ or $\|r_{J} \| >
  \sqrt{T}\tau^{J/2} \|r_0\|$. In either case, termination will occur
  at iteration $J$, unless it has occurred already at a previous
  iteration.

  To derive \eqref{eq:pv8}, note that the main workload at each
  iteration $j$ is computation of a single matrix-vector product
  \refer{$H p_j$ after the increment of $j$ (since 
    matrix-vector products involving the matrices
    $H$ and $\bH$ and the vectors $y_j$ and $r_j$ can be computed in
    terms of this vector, in an additional $O(n)$ operations).}
  \refer{(The ``$+1$'' in \eqref{eq:pv8} accounts for the initial
    matrix-vector multiplication $H p_0$ performed prior to entering 
    the loop.)}

  If we do not store additional information, we need to regenerate the
  information needed to compute the direction $d$ satisfying
  \eqref{eq:weakcurvdir} by re-running the iterations of CG, possibly
  up to the \refer{second-to-last} iteration. This fact accounts for
  the additional cost of $\min\{n,J(M,\eps,\zeta)\}$ in the no-storage
  case. \footnote{\refer{Interestingly, as we show in
      Appendix~\ref{app:cgresults}, the ratios on the left-hand side
      of \eqref{eq:weakcurvdir} can be calculated without knowledge of
      $y_i$ for $i=0,1,\dotsc,j-1$, provided that we store the scalar
      quantities $\alpha_i$ and $\|r_i\|^2$ for $i=0,1,\dotsc,j-1$.}}
\end{proof}

\refer{Note that $J(M,\epsilon,\zeta)$ is an increasing function of
  $M$, since $\hat{\zeta}$ is a decreasing function of $M$, while $T$
  and $\tau$ (and thus $\sqrt{T}\tau^{j/2}$) are increasing in $M$. If
  $U_H$ is known in advance, we can call Algorithm~\ref{alg:ccg} with
  $M = U_H$ and use $J(U_H,\epsilon,\zeta)$ as the bound. Alternately,
  we can call Algorithm~\ref{alg:ccg} with $M=0$ and let it adjust $M$
  as needed during the computation. Since the final value of $M$ will
  be at most $U_H$, and since $J(M,\epsilon,\zeta)$ is an increasing
  function of $M$, the quantity $J(U_H,\epsilon,\zeta)$ provides the
  upper bound on the number of iterations in this case too.}

We can estimate $J$ by taking logs in its definition, as follows:
\[
J \le \frac{2\ln(\hat{\zeta}/\sqrt{T})}{\ln(\tau)} =
\frac{\ln(\hat{\zeta}^2/T)}
     {\ln\left(\frac{\sqrt{\kappa}}{\sqrt{\kappa}+1}\right)} =
     \frac{\ln(T/\hat{\zeta}^2)}{\ln(1+{1}/{\sqrt{\kappa}})} \le
     \left(\sqrt{\kappa}+\frac{1}{2}\right)
     \ln\left(\frac{T}{\hat{\zeta}^2}\right),
\]
where we used $\ln(1+\tfrac{1}{t}) \ge \tfrac{1}{t+1/2}$ to obtain the
latest inequality. By
replacing $T,\tau,\hat{\zeta},\kappa$ by their
definitions in Algorithm~\ref{alg:ccg}, and using
$
\frac{1}{1-\sqrt{\tau}} = \frac{1+\sqrt{\tau}}{1-\tau} \le
\frac{2}{1-\tau},
$
we obtain
\begin{align} \nonumber
J(M,\eps,\zeta) &  \le
\refer{\min\left\{n,\left\lceil \left(\sqrt{\kappa}+\frac{1}{2}\right)
\ln\left(\frac{144 \left(\sqrt{\kappa}+1\right)^2
  \kappa^6} {\zeta^2}\right) \right\rceil \right\}} \\
\label{eq:HvcountcappedCG}
 & = \min \left\{ n , \tcO(\eps^{-1/2}) \right\}.
\end{align}


\subsection{Minimum Eigenvalue Oracle} 
\label{subsec:algomeo}

A minimum eigenvalue oracle is needed in the main algorithm to either
return a direction of ``sufficient negative curvature'' in a given
symmetric matrix, or else return a certificate that the matrix is
almost positive definite. This oracle is stated as
Procedure~\ref{alg:meo}.

\floatname{algorithm}{Procedure}

\begin{algorithm}[ht!]
\caption{Minimum Eigenvalue Oracle}
\label{alg:meo}
\begin{algorithmic}
  \STATE \emph{Inputs:} Symmetric matrix $H \in \R^{n \times n}$,
  \refer{tolerance $\epsilon>0$};
  \STATE \refer{\emph{Optional input:} Upper bound on Hessian norm 
  $M$;}
  \STATE \emph{Outputs:} An estimate $\lambda$ of $\lambdamin(H)$
  such that $\lambda \le - \epsilon/2$, and vector $v$
  with $\|v\|=1$ such that $v^\top H v =\lambda$ OR
  a certificate that $\lambdamin(H) \ge -\epsilon$. In the latter case, when
  the certificate is output, it is false with probability $\delta$ 
  \refer{for some $\delta \in [0,1)$}.
\end{algorithmic}
\end{algorithm}

To implement this oracle, we can use any procedure that finds the
smallest eigenvalue of $H$ to an absolute precision of $\epsilon/2$
\refer{with probability at least $1-\delta$. This probabilistic 
property encompasses both deterministic and randomized instances of 
Procedure~\ref{alg:meo}. In Section~\ref{subsec:wcc2}, we will establish 
complexity results under this general setting, and analyze the 
impact of the threshold $\delta$}.  Several possibilities
for implementing Procedure~\ref{alg:meo} have been proposed in the
literature, with various guarantees. An exact\refer{, deterministic} computation of the
minimum eigenvalue and eigenvector (through a full Hessian evaluation
and factorization) would be a valid choice for Procedure~\ref{alg:meo}
\refer{(with $\delta=0$ in that case)}, but is unsuited to our setting
in which Hessian-vector products and vector operations are the
fundamental \refer{operations}.
Strategies that require only gradient
evaluations~\cite{ZAllenZhu_YLi_2018,YXu_RJin_TYang_2018} may offer
similar guarantees to those discussed below.

We focus on two \refer{inexact, randomized} approaches for implementing 
Procedure~\ref{alg:meo}.
The first is the Lanczos method, which finds the smallest eigenvalue
of the restriction of a given symmetric matrix to a Krylov subspace
based on some initial vector. When the starting vector is chosen
randomly, the dimension of the Krylov subspace increases by one at
each Lanczos iteration, with high probability (see
Appendix~\ref{app:lacg} and \cite{JKuczynski_HWozniakowski_1992}).
To the best of our knowledge, \cite{YCarmon_JCDuchi_OHinder_ASidford_2018a} 
was the first paper \refer{to propose a complexity analysis based on 
the use of randomized Lanczos for detecting negative curvature}.  
The key result is the following.


\begin{lemma} \label{lemma:randlanczos}
\refer{Suppose that the Lanczos method is used to estimate the
  smallest eigenvalue of $H$ starting with a random vector uniformly
  generated on the unit sphere, where $\|H\| \le M$. For any $\delta
  \in [0,1)$, this approach finds the smallest eigenvalue of $H$ to an
  absolute precision of $\eps/2$, together with a corresponding
  direction $v$, in at most
\begin{equation} \label{eq:rlanc}
	\min \left\{n, 1+\ceil*{\frac12 \ln (2.75 n/\delta^2)
	\sqrt{\frac{M}{\epsilon}}} \right\} \quad \mbox{iterations},
\end{equation}	
with probability at least $1-\delta$.}
\end{lemma}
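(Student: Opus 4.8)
The plan is to reduce the estimation of the smallest eigenvalue of $H$ to the estimation of the \emph{largest} eigenvalue of an associated positive semidefinite matrix, and then invoke the classical randomized-Lanczos bound (see \cite{JKuczynski_HWozniakowski_1992}, and its adaptation to curvature detection in \cite{YCarmon_JCDuchi_OHinder_ASidford_2018a}, recalled in Appendix~\ref{app:lacg}). First I would set $A := MI - H$. Since $\|H\| \le M$ we have $H \preceq MI$, hence $A \succeq 0$, and $\lambda_{\max}(A) = M - \lambda_{\min}(H) \in [0, 2M]$. Because $A$ is an affine function of $H$, the Krylov subspaces $\mathcal{K}_k(H,b)$ and $\mathcal{K}_k(A,b)$ coincide for every starting vector $b$, so running $k$ Lanczos steps on $H$ generates the same subspace and the same Ritz vectors as running $k$ steps on $A$. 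If $\theta^{(k)}$ denotes the largest Ritz value of $A$ on that subspace and $v$ the associated unit Ritz vector, then $v^\top H v = M - \theta^{(k)}$, and
\[
0 \;\le\; \lambda_{\max}(A) - \theta^{(k)} \;=\; (v^\top H v) - \lambda_{\min}(H),
\]
so controlling $\lambda_{\max}(A) - \theta^{(k)}$ simultaneously controls the (one-sided) absolute error of the smallest-eigenvalue estimate for $H$ and supplies the required unit vector $v$.

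Next I would invoke the randomized-Lanczos estimate: for a positive semidefinite $A$ and a starting vector drawn uniformly on the unit sphere, after $k$ steps the relative error $(\lambda_{\max}(A)-\theta^{(k)})/\lambda_{\max}(A)$ exceeds a target $\bar\epsilon \in (0,1)$ with probability at most $C\sqrt{n}\,e^{-(2k-1)\sqrt{\bar\epsilon}}$ for an explicit modest constant $C$ (taking $C = 1.648$ is what produces the factor $2.75 \approx C^2$ in \eqref{eq:rlanc}). I would apply this with $\bar\epsilon := \epsilon/(4M)$, so that on the complement of the failure event $\lambda_{\max}(A)-\theta^{(k)} < \bar\epsilon\,\lambda_{\max}(A) \le 2M\bar\epsilon = \epsilon/2$, which is exactly the claimed absolute precision. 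Requiring the failure probability to be at most $\delta$ amounts to $2k-1 \ge 2\ln(C\sqrt n/\delta)\sqrt{M/\epsilon} = \ln(C^2 n/\delta^2)\sqrt{M/\epsilon}$; solving for $k$ and bounding $C^2 \le 2.75$ shows that $k = 1 + \ceil*{\tfrac12\ln(2.75 n/\delta^2)\sqrt{M/\epsilon}}$ steps suffice with probability at least $1-\delta$.

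Finally I would handle the $\min\{n,\cdot\}$ term. In exact arithmetic, a uniform random start has, with probability one, nonzero projection onto each eigenspace of $H$; hence the Krylov subspace stabilizes within $n$ steps and its smallest Ritz value equals $\lambda_{\min}(H)$ exactly, with a corresponding exact eigenvector. Taking the smaller of the two iteration counts yields \eqref{eq:rlanc}.

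The substantive content is entirely in the randomized-Lanczos estimate itself, which I would quote from Appendix~\ref{app:lacg}: its proof combines a deterministic Chebyshev-polynomial bound on Lanczos convergence --- uniform over the spectral gaps of $A$, which is why the exponent carries only $\sqrt{\bar\epsilon}$ rather than a gap-dependent factor --- with a small-ball estimate for $|\langle b, q_1\rangle|$ when $b$ is uniform on the sphere. The only work left outside that cited result is bookkeeping: the shift $A = MI - H$, the relative-to-absolute conversion via $\lambda_{\max}(A) \le 2M$, and checking that the constants reproduce \eqref{eq:rlanc} exactly; so the main obstacle is really just making sure the precise constants in the appendix line up with the stated bound rather than anything conceptual.
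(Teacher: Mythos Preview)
Your proposal is correct and follows essentially the same route as the paper: the paper applies Lemma~\ref{lem:KW4.2} (itself the Kuczy\'nski--Wo\'zniakowski bound combined with shift invariance) with $\bar\eps=\eps/(4M)$ and then uses $\lambdamax(H)-\lambdamin(H)\le 2M$ to convert to absolute precision $\eps/2$, which is exactly your shift-to-PSD argument with $A=MI-H$ unpacked. The only thing you omit is the trivial case $\eps/(4M)\ge 1$ (needed so that $\bar\eps\in(0,1)$), which the paper dispatches in one line by observing that the initial random vector already achieves the required precision.
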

\begin{proof}
  If $\frac{\epsilon}{4M} \ge 1$, we have
  $-\tfrac{\epsilon}{4}I \prec -M I\preceq H  \preceq M I \prec \tfrac{\epsilon}{4}I$. 
  Therefore, letting $b$ be the (unit norm)
  random start of the Lanczos method, we obtain
  \[
  	b^\top H b \le M < \frac{\epsilon}{4} =-\frac{\epsilon}{4}+\frac{\epsilon}{2} < 
  	 -M + \frac{\epsilon}{2} \le \lambdamin(H) + \frac{\epsilon}{2},
  \]
  thus the desired conclusion holds at the initial point. 
  
  We now suppose that $\frac{\epsilon}{4M} \in (0,1)$.
  By setting $\bar\eps = \frac{\epsilon}{4M}$ in
  Lemma~\ref{lem:KW4.2}, we have that when $k$ is at least the
  quantity in \eqref{eq:rlanc}, the estimate $\ximin(H,b,k)$ of the
  smallest eigenvalue after $k$ iterations of Lanczos applied to $H$
  starting from vector $b$ satisfies the following bound, with
  probability at least $1-\delta$:
  \[
  \ximin(H,b,k) - \lambdamin(H) \le \bar\eps
  (\lambdamax(H)-\lambdamin(H)) \le \frac{\eps}{2} \frac{\lambdamax(H)
    - \lambdamin(H)}{2M} \le \frac{\eps}{2},
  \]
  as required.
\end{proof}



Procedure~\ref{alg:meo} can be
implemented by outputting the approximate eigenvalue $\lambda$ for
$H$, determined by the randomized Lanczos process, along with the
corresponding direction $v$, provided that $\lambda \le -\eps/2$. When
$\lambda>-\eps/2$, Procedure~\ref{alg:meo} returns the certificate
that $\lambdamin(H) \ge -\eps$\refer{, which is correct with 
probability at least $1-\delta$}.

The second approach to implementing Procedure~\ref{alg:meo} is to
apply the classical CG algorithm to solve a linear system in which the
coefficient matrix is a shifted version of the matrix $H$ and the
right-hand side is random. This procedure has essentially identical
performance to Lanczos in terms of the number of iterations required
to detect the required direction of sufficiently negative curvature,
as the following theorem shows.
\begin{theorem} \label{theo:randcg}
Suppose that Procedure~\ref{alg:meo} consists in applying the standard
CG algorithm (see Appendix~\ref{app:cgresults}) to the
linear system
\[
\left(H + \tfrac12 \epsilon I \right) d =  b,
\]
where $b$ is chosen randomly from a uniform distribution over the unit
sphere. \refer{Let $M$ satisfying $\|H\| \le M$ and $\delta \in (0,1)$
  be given.} If $\lambdamin(H) < -\eps$, then with probability at
least $1-\delta$, CG will yield a direction $v$ satisfying the
conditions of Procedure~\ref{alg:meo} in a number of iterations
bounded above by~\eqref{eq:rlanc}. Conversely, if CG runs for this
number of iterations without encountering a direction of negative
curvature for $H + \tfrac12 \epsilon I$, then $\lambdamin(H) \ge
-\eps$ with probability at least $1-\delta$.
\end{theorem}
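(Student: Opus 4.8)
The plan is to reduce the behaviour of CG applied to the shifted system $(H+\tfrac12\eps I)d=b$ to the randomized Lanczos analysis of Lemma~\ref{lemma:randlanczos}, exploiting the fact that Krylov subspaces are invariant under shifts of the matrix: $\cK_k(H+\tfrac12\eps I,b)=\cK_k(H,b)$ for all $k$. Write $A:=H+\tfrac12\eps I$, let $k^\star$ denote the iteration count appearing in~\eqref{eq:rlanc}, and let $E$ be the event (over the random draw of $b$) that after $k^\star$ Lanczos iterations applied to $H$ from starting vector $b$, the smallest Ritz value of $H$ on $\cK_{k^\star}(H,b)$ — denote it $\xi$, attained by a unit Ritz vector $v\in\cK_{k^\star}(H,b)$ — satisfies $\xi\le\lambdamin(H)+\eps/2$. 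By Lemma~\ref{lemma:randlanczos}, $\Pr[E]\ge 1-\delta$, and this holds regardless of the value of $\lambdamin(H)$.

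I would first establish the forward claim, so assume $\lambdamin(H)<-\eps$. On $E$ we have $v^\top H v=\xi\le\lambdamin(H)+\eps/2<-\eps/2$, hence $v^\top A v = v^\top H v+\tfrac12\eps<0$; thus $A$ is indefinite on $\cK_{k^\star}(A,b)=\cK_{k^\star}(H,b)$. I would then appeal to the standard properties of CG gathered in Appendix~\ref{app:cgresults}: through iteration $j-1$ the search directions $p_0,\dots,p_{j-1}$ are $A$-conjugate and span $\cK_j(A,b)$, so if CG were to run iterations $0,\dots,k^\star-1$ with $p_i^\top A p_i>0$ throughout — allowing for the possibility that the residual vanishes and the Krylov sequence terminates early, in which case $\cK_{k^\star}(A,b)$ equals the resulting $A$-invariant subspace — then $A$ would be positive definite on $\cK_{k^\star}(A,b)$, contradicting indefiniteness. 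Hence on $E$, CG detects some $p_j$ with $p_j^\top A p_j\le 0$ at an iteration $j\le k^\star-1$; setting $v:=p_j/\|p_j\|$ gives $\|v\|=1$ and $v^\top H v = v^\top A v-\tfrac12\eps\le-\eps/2$, so returning $\lambda:=v^\top H v$ and $v$ meets the specification of Procedure~\ref{alg:meo}. Since this occurs on $E$, it holds with probability at least $1-\delta$, proving the first assertion.

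The converse is the probabilistic contrapositive. If $\lambdamin(H)\ge-\eps$ the claim holds trivially. If $\lambdamin(H)<-\eps$, the argument above shows that on $E$ — an event of probability at least $1-\delta$ — CG encounters a negative curvature direction for $A$ within $k^\star$ iterations; therefore the event ``CG runs $k^\star$ iterations without encountering such a direction'' lies in $E^c$ and has probability at most $\delta$. Equivalently, whenever CG does run the full $k^\star$ iterations without negative curvature, the conclusion $\lambdamin(H)\ge-\eps$ fails with probability at most $\delta$, as required.

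I expect the main obstacle to be the deterministic implication inside the forward part — that indefiniteness of $A$ on $\cK_{k^\star}(A,b)$ compels CG to expose a nonpositive curvature direction within $k^\star$ iterations — which rests on the conjugate-basis characterization of the CG iterates and on checking that early termination of the Krylov sequence does not disturb the iteration count~\eqref{eq:rlanc}. The remaining ingredients are routine: shift invariance of Krylov subspaces, the one-line passage from a curvature estimate for $A$ to one for $H$, and the fact that the randomized analysis behind~\eqref{eq:rlanc} already absorbs the low-probability events in which $b$ has deficient overlap with the eigenspace of $\lambdamin(H)$.
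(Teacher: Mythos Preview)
Your proposal is correct and follows essentially the same route as the paper: shift invariance of Krylov subspaces to transfer the randomized Lanczos guarantee (Lemma~\ref{lemma:randlanczos}) to the matrix $A=H+\tfrac12\eps I$, combined with a deterministic claim that CG applied to $A$ must expose nonpositive curvature at exactly the first iteration at which the Krylov subspace contains such a direction. The paper factors this deterministic claim out as a separate result (Theorem~\ref{th:lcg}) and proves it carefully---treating the early-termination-by-zero-residual case via an invariant-subspace argument and the ``CG runs too long'' case via the $A$-conjugacy expansion $z^\top A z=\sum_j\gamma_j^2\,p_j^\top A p_j$---whereas you sketch the same argument inline and correctly flag it as the nontrivial step.
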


We prove this result, and give some additional details of the CG
implementation, in Appendices~\ref{app:cgresults}
and~\ref{app:lacg}. \refer{We also present in
  Appendix~\ref{subapp:lacgwithoutM} a variant of the
  randomized-Lanczos implementation of Procedure~\ref{alg:meo} that
  does not require prior knowledge of the bound $M$ such that $\|H\|
  \le M$. In this variant, $M$ itself is also estimated via randomized
  Lanczos, and the number of iterations required does not different
  significantly from \eqref{eq:rlanc}. It follows from this result,
  together with our observation above that $M$ can also be obtained
  adaptively inside Algorithm~\ref{alg:ccg}, that knowledge of the
  bound on $\| \nabla^2 f(x) \|$ is not needed at all in implementing
  our method.}

\subsection{Damped Newton-CG}
\label{subsec:algodampednewt}

Algorithm~\ref{alg:dncg} presents our method for finding a point that
satisfies \eqref{eq:2epson}. It uses two kinds of search
directions. Negative curvature directions (that are also first-order
descent steps) are used when they are either encountered in the course
of applying the Capped CG method (Algorithm~\ref{alg:ccg}) to the
damped Newton equations, or found explicitly by application of
Procedure~\ref{alg:meo}. The second type of step is an inexact damped
Newton step, which is the other possible outcome of
Algorithm~\ref{alg:ccg}. For both types of steps, a backtracking line
search is used to identify a new iterate that satisfies a sufficient
decrease condition, that depends on the cubic norm of the step. Such a
criterion is instrumental in establishing optimal complexity
guarantees in second-order
methods~\cite{EGBirgin_JMMartinez_2017,FECurtis_DPRobinson_MSamadi_2018,FECurtis_DPRobinson_MSamadi_2017,CWRoyer_SJWright_2018}.

\floatname{algorithm}{Algorithm}

\begin{algorithm}[ht!]
\caption{Damped Newton-CG}
\label{alg:dncg}
\begin{algorithmic}
\STATE \emph{Inputs:} Tolerances  $\epsg>0$, $\epsH>0$; backtracking 
parameter $\theta \in (0,1)$; starting point $x_0$; 
\refer{accuracy parameter $\zeta \in (0,1)$;}
\STATE \refer{\emph{Optional input:} Upper bound  $M > 0$ on Hessian 
norm};
\FOR{$k=0,1,2,\dotsc$}
\IF{$\|\nabla f(x_k)\| > \epsg$}
\STATE Call Algorithm~\ref{alg:ccg} with $H = \nabla^2 f(x_k)$, $\epsilon=\epsH$, 
$g=\nabla f(x_k)$, accuracy parameter $\zeta$ and $M$ if provided, 
to obtain outputs $d$, d\_type; 
\IF{d\_type=NC}
\STATE $d_k \leftarrow -\mathrm{sgn}(d^\top g) \frac{|d^\top \nabla^2 f(x_k) d|}{\|d\|^2} 
\frac{d}{\|d\|}$;
\ELSE[\refer{d\_type=SOL}]
\STATE $d_k \leftarrow d$;
\ENDIF
\STATE Go to \textbf{Line Search};
\ELSE
\STATE Call Procedure~\ref{alg:meo} with $H = \nabla^2 f(x_k)$,
$\epsilon=\epsH$ and $M$ if provided; 
\IF{Procedure~\ref{alg:meo} certifies that $\lambdamin(\nabla^2 f(x_k)) \ge -\epsH$}
\STATE
Terminate; 
\ELSE[\refer{direction of sufficient negative curvature found}]
\STATE 
$d_k \leftarrow -\mathrm{sgn}(v^\top g) \frac{|v^\top \nabla^2 f(x_k) v|}{\|v\|^2} v$
(where $v$ is the output from Procedure~\ref{alg:meo})
and go to \textbf{Line Search};
\ENDIF
\ENDIF
\STATE \textbf{Line Search}: Compute a step length $\alpha_k=\theta^{j_k}$, 
where $j_k$ is the smallest nonnegative integer such that
\begin{equation} \label{eq:lsdecreasedamped}
	f(x_k + \alpha_k d_k) < f(x_k) - \frac{\eta}{6}\alpha_k^3 \|d_k\|^{3};
\end{equation}
\STATE $x_{k+1} \leftarrow x_k+\alpha_k d_k$;
\ENDFOR
\end{algorithmic}
\end{algorithm}

In its deployment of two types of search directions, our method is
similar to Steihaug's trust-region Newton-CG method \cite{TSteihaug_1983},
which applies CG (starting from a zero initial guess) to solve the
Newton equations but, if it encounters a negative curvature direction
during CG, steps along that direction to the trust-region boundary. It
differs from the line-search Newton-CG method described in
\cite[Section~7.1]{JNocedal_SJWright_2006} in that it makes use of
negative curvature directions when they are encountered, rather than
discarding them in favor of a steepest-descent direction.
Algorithm~\ref{alg:dncg} improves over both approaches in having a
global complexity theory for convergence to both approximate
first-order points, and points satisfying the approximate second-order
conditions \eqref{eq:2epson}.


\refer{In Section~\ref{sec:wcc}, we will analyze the global complexity
  properties of our algorithm. Local convergence could also be of
  interest, in particular, it is probably possible to prove rapid
  convergence of Algorithm~\ref{alg:dncg} once it reaches the
  neighborhood of a strict local minimum. 
  We believe that such results would be complicated and less enlightening 
  than the complexity guarantees, so we restrict
  our study to the latter.}

\section{Complexity Analysis}
\label{sec:wcc}

In this section, we present a global worst-case complexity analysis of
Algorithm~\ref{alg:dncg}. Elements of the analysis follow those in the
earlier paper \cite{CWRoyer_SJWright_2018}. The most technical part
appears in Section~\ref{subsec:wccccg} below, where we show that the
Capped CG procedure returns (deterministically) either an inexact
Newton step or a negative curvature direction, both of which can be
used as the basis of a successful backtracking line search. These
properties are used in Section~\ref{subsec:wcc1} to prove complexity
results for convergence to a point satisfying the approximate
first-order condition $\| \nabla f(x) \| \le
\epsg$. Section~\ref{subsec:wcc2} proves complexity results for
finding approximate second-order points \eqref{eq:2epson}, leveraging
properties of the minimum eigenvalue oracle, Procedure~\ref{alg:meo}.

\subsection{Properties of Capped CG}
\label{subsec:wccccg}

We now explore the properties of the directions $d$ that are output by
our Capped CG procedure, Algorithm~\ref{alg:ccg}. The main result
deals with the case in which Algorithm~\ref{alg:ccg} terminates due to
insufficiently rapid decrease in $\|r_j \|$, showing that the strategy
for identifying a direction of sufficient negative curvature for $H$
is effective.

\begin{theorem} \label{theo:cvCGwhilestronglycvx}
Suppose that the main loop of Algorithm~\ref{alg:ccg} terminates with
$j=\hIcg$, where
\[
\hIcg \in \{1,\dotsc,\min\{n,J(M,\eps,\zeta)\}\},
\]
(where $J(M,\eps,\zeta)$ is defined in Lemma~\ref{lemma:ccgits} and
\eqref{eq:HvcountcappedCG}) \refer{because the fourth termination test is
satisfied and the three earlier conditions do not hold, that is, 
$y_{\hIcg}^\top \bH y_{\hIcg} \ge \eps \|y_{\hIcg}\|^2$, 
$p_{\hIcg}^\top \bH p_{\hIcg} \ge \eps \|p_{\hIcg}\|^2$, and
\begin{equation} \label{eq:Kitsstronglycvx}
\|r_{\hIcg}\| > \max\{\hat{\zeta},\sqrt{T}\tau^{\hIcg /2}\}\|r_0\|.
\end{equation}
where $M$, $T$, $\hat{\zeta}$, and $\tau$ are the values returned by
Algorithm~\ref{alg:ccg}. Then
$y_{\hIcg+1}$ is computed by Algorithm~\ref{alg:ccg}, and we have}
\begin{equation} \label{eq:yKp1negcurv}
\frac{(y_{\hIcg +1}-y_i)^\top \bH (y_{\hIcg+1}-y_i)}
{\|y_{\hIcg+1}-y_i\|^2} < \eps, \quad \mbox{for some $i \in \{0,\dotsc,\hIcg-1\}$.}
\end{equation}
\end{theorem}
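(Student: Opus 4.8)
The goal is to show that when the main CG loop terminates because the residual norms have decayed too slowly — condition~\eqref{eq:Kitsstronglycvx} — there must exist an index $i<\hIcg$ such that the difference $y_{\hIcg+1}-y_i$ exhibits curvature below $\eps$ with respect to $\bH$. The natural strategy is a proof by contradiction: assume instead that
\[
(y_{\hIcg+1}-y_i)^\top \bH (y_{\hIcg+1}-y_i) \ge \eps \|y_{\hIcg+1}-y_i\|^2
\quad\text{for all } i\in\{0,\dots,\hIcg-1\},
\]
and combine this with the hypotheses $y_{\hIcg}^\top\bH y_{\hIcg}\ge\eps\|y_{\hIcg}\|^2$ and $p_j^\top\bH p_j\ge\eps\|p_j\|^2$ for all $j\le\hIcg$ (the per-iteration curvature tests having not fired), to conclude that $\bH$ behaves, on the relevant Krylov subspace, enough like a matrix with smallest eigenvalue $\ge\eps$ that the standard CG convergence bound forces $\|r_{\hIcg}\|\le\sqrt{T}\tau^{\hIcg/2}\|r_0\|$ — contradicting~\eqref{eq:Kitsstronglycvx}. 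Along the way I also need the trivial observation that under these hypotheses Algorithm~\ref{alg:ccg} does indeed compute $y_{\hIcg+1}$ (the line ``Compute $\alpha_j, y_{j+1}$'' in the fourth branch), which is immediate from the control flow since none of the first three termination tests fired at $j=\hIcg$.

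\textbf{Key steps, in order.} First I would set up the linear-algebra framework: since $y_{j+1}-y_i = \sum_{\ell=i}^{j}\alpha_\ell p_\ell$ with the $p_\ell$ conjugate with respect to $\bH$, the curvature of any such difference is $\sum_{\ell=i}^{j}\alpha_\ell^2\, p_\ell^\top\bH p_\ell$ divided by $\sum_{\ell=i}^{j}\alpha_\ell^2\|p_\ell\|^2$ (using $p$-orthogonality in the denominator as well, which holds for standard CG). So the negated hypothesis, applied with $i$ ranging over $0,\dots,\hIcg-1$ and also the hypothesis on $y_{\hIcg}$ itself, says that all these ``tail averages'' of the Rayleigh quotients $p_\ell^\top\bH p_\ell/\|p_\ell\|^2$ are $\ge\eps$. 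Second, I would invoke the CG machinery collected in the Appendix (Appendix~\ref{app:cgresults}) — in particular the results that in this paper let one relate slow residual decay to small curvature. The cleanest route is: the contradiction hypotheses imply a lower bound of the form $\bH \succeq \eps I$ \emph{effectively restricted to the Krylov space} $\mathcal{K}_{\hIcg}(\bH,g)$, i.e. $v^\top\bH v\ge\eps\|v\|^2$ for all $v$ in that subspace; then the classical CG error bound on $\kappa$-conditioned problems gives $\|r_j\|\le 2\sqrt{\kappa}\,\tau^{j/2}\|r_0\|$ (or whatever precise constant the Appendix supplies), hence $\|r_{\hIcg}\|\le\sqrt{T}\tau^{\hIcg/2}\|r_0\|$ by the definition $T = 4\kappa^4/(1-\sqrt\tau)^2$, which was chosen precisely to dominate that bound. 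Third, this contradicts the standing assumption~\eqref{eq:Kitsstronglycvx}, so the negated hypothesis is false and~\eqref{eq:yKp1negcurv} holds for at least one $i$.

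\textbf{The main obstacle.} The delicate point is passing from ``every tail-average of the $p_\ell$-Rayleigh quotients is $\ge\eps$'' to ``$v^\top\bH v\ge\eps\|v\|^2$ for every $v\in\mathcal{K}_{\hIcg}(\bH,g)$,'' since the latter is genuinely stronger than the former applied only to the specific CG iterates and search directions. Making this rigorous requires a careful induction (presumably the content of an Appendix lemma): one shows by downward induction on $i$ that $(y_j - y_i)^\top\bH(y_j-y_i)\ge\eps\|y_j-y_i\|^2$ for all $i\le j\le\hIcg$, and then uses the fact that $\{p_0,\dots,p_{\hIcg-1}\}$ spans the Krylov subspace together with the conjugacy relations to extend the quadratic-form inequality to arbitrary combinations. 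The other place demanding care is bookkeeping around the updates to $M,\kappa,\hat\zeta,\tau,T$ during execution: the statement specifies that these are ``the values returned by Algorithm~\ref{alg:ccg},'' so one must check that the curvature-monitoring updates only increase $\kappa$ (hence $T$ and $\tau$) monotonically, so that the final $T,\tau$ still validate the CG residual bound over all earlier iterations — a point that the surrounding discussion (the remark that $J(M,\eps,\zeta)$ is increasing in $M$) already anticipates. Once those two technical pieces are in hand, the contradiction is essentially a one-line consequence of the definition of $T$.
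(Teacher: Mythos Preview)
Your overall contradiction strategy matches the paper's, but the core mechanism you propose does not work, and the paper takes a fundamentally different route.

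First, a factual error: you write the denominator of the curvature ratio as $\sum_{\ell=i}^{j}\alpha_\ell^2\|p_\ell\|^2$, ``using $p$-orthogonality in the denominator as well, which holds for standard CG.'' This is false. In CG the search directions $p_\ell$ are $\bH$-conjugate but \emph{not} orthogonal; it is the residuals $r_\ell$ that are orthogonal. The paper's Lemma~\ref{lemma:cgcurvaturescalars} computes $\|y_{j+1}-y_i\|^2$ correctly, and the expression is considerably more involved than a simple sum of $\alpha_\ell^2\|p_\ell\|^2$.

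Second, and more seriously, the step you flag as ``the main obstacle'' is not just delicate --- it is false in general, and no Appendix lemma will rescue it. Knowing that the finitely many vectors $y_{\hIcg+1}-y_i$ (for $i=0,\dots,\hIcg$) and the $p_j$ have curvature $\ge\eps$ does \emph{not} imply $v^\top\bH v\ge\eps\|v\|^2$ for every $v$ in the Krylov subspace. A quadratic form can be $\ge\eps$ on a handful of specific vectors while still being indefinite on their span. Consequently you cannot invoke the classical CG residual bound, which genuinely requires the lower eigenvalue bound on the relevant subspace.

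The paper's proof avoids this entirely. It never tries to establish subspace positive-definiteness. Instead it uses an estimate-sequence argument borrowed from the analysis of accelerated gradient (Carmon--Duchi--Hinder--Sidford, following Bubeck): one defines surrogate quadratics $\Phi_j(z)=\Phi_j^*+\tfrac{\eps}{2}\|z-v_j\|^2$, proves by induction that $q(y_j)\le\Phi_j^*$ using only the curvature hypotheses along the specific directions $y_{\hIcg+1}-y_i$, and combines this with $\Phi_{\hIcg}(y_{\hIcg+1})\le q(y_{\hIcg+1})+\tau^{\hIcg}\psi(y_{\hIcg+1})$ to obtain a bound on $q(y_{\hIcg})-q(y_{\hIcg+1})$. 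That bound is then converted into the desired inequality on $\|r_{\hIcg}\|$ via direct manipulation of CG identities (Properties~2, 3, 7, 8 of Lemma~\ref{lemma:propertiesCGoneiter}), and this is where the particular constant $T=4\kappa^4/(1-\sqrt{\tau})^2$ emerges. The point is that the estimate-sequence machinery needs only the ``strong convexity along the specific directions $y_{\hIcg+1}-y_i$'' inequalities~\eqref{eq:stronglycvxyKp1yj}, not a full subspace eigenvalue bound. Your bookkeeping remark about the monotonicity of $M,\kappa,T,\tau$ is correct and is used in the paper's argument, but the heart of the proof is this potential-function construction, not a CG convergence-rate citation.
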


The proof of Theorem~\ref{theo:cvCGwhilestronglycvx} is quite
technical, and can be found in Appendix~\ref{app:ccg}. It relies on an
argument previously used to analyze a strategy based on accelerated
gradient~\cite[Appendix A.1]{YCarmon_JCDuchi_OHinder_ASidford_2017b},
itself inspired by a result of Bubeck~\cite{SBubeck_2015}, but it
needs some additional steps that relate specifically to CG.  The part
of our proof that corresponds
to~\cite[Appendix~A.1]{YCarmon_JCDuchi_OHinder_ASidford_2017b} is
simplified in some respects, thanks to the use of CG and the fact that
a quadratic \refer{(rather than a nonlinear) function is being
  minimized in the subproblem.}

Having shown that Algorithm~\ref{alg:ccg} is well-defined, we
summarize the properties of its outputs. 

\begin{lemma} \label{lemma:ccgsteps}
Let Assumptions~\ref{assum:compactlevelset} and~\ref{assum:fC22} hold,
and suppose that Algorithm~\ref{alg:ccg} is invoked at an iterate
$x_k$ of Algorithm~\ref{alg:dncg} (so that $\|\nabla f(x_k)\| >
\epsg>0$).
Let $d_k$ be the vector obtained in Algorithm~\ref{alg:dncg} from the
output $d$ of Algorithm~\ref{alg:ccg}. Then, one of the two following
statements holds:
\begin{enumerate}
\item {\em d\_type=SOL}, and the direction $d_k$ satisfies 
\begin{subequations} \label{eq:ccgstepSOL} 
\begin{align}
\label{eq:ccgstepSOLcurv}
d_k^\top (\nabla^2 f(x_k)+2\epsH I) d_k & \ge \epsH \|d_k\|^2, \\
\label{eq:ccgstepSOLnorm}
\|d_k\| & \le \refer{1.1 \epsH^{-1} \|\nabla f(x_k)\|,}\\
\label{eq:ccgstepSOLres}
\|\hat{r}_k \|  & \le  \refer{\frac{1}{2}\epsH\zeta \| d_k \|,}
\end{align}
\end{subequations}
where
\begin{equation} \label{eq:def.rhat}
\hat{r}_k := (\nabla^2 f(x_k)+2\epsH I) d_k+ \nabla f(x_k);
\end{equation}
\item {\em d\_type=NC}, and the direction $d_k$ satisfies 
  $d_k^\top \nabla f(x_k) \le 0$ as well as
  \refer{
  \begin{equation} \label{eq:ccgstepNC}
    \frac{d_k^\top \nabla^2 f(x_k) d_k}{\|d_k\|^2} = -\|d_k \| \le -\epsH.
  \end{equation}
  }
\end{enumerate}
\end{lemma}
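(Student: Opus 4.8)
The plan is to handle the two termination types separately, treating the SOL case first, since it requires several estimates, and the NC case second, where the main work is already done by the definition of $d_k$ in Algorithm~\ref{alg:dncg}. Throughout, I will write $H = \nabla^2 f(x_k)$, $\bar H = H + 2\epsH I$, $g = \nabla f(x_k)$, and I will repeatedly use that $\|H\| \le U_H$ (from \eqref{eq:boundscompact}) and that the final value $M$ returned by Algorithm~\ref{alg:ccg} satisfies $M \le U_H$, so the derived quantities $\kappa$, $\hat\zeta$, $\tau$, $T$ are controlled.

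For the SOL case: when d\_type=SOL, Algorithm~\ref{alg:ccg} terminated with $d = y_j$ for some $j$ at which the test $\|r_j\| \le \hat\zeta\|r_0\|$ held, while the earlier tests $y_j^\top \bar H y_j \ge \epsH\|y_j\|^2$ and $p_i^\top \bar H p_i \ge \epsH\|p_i\|^2$ (for all $i \le j$) failed to trigger termination. Since $d_k = d = y_j$ here, \eqref{eq:ccgstepSOLcurv} is immediate from the fact that the "$y_j^\top \bar H y_j < \epsH\|y_j\|^2$" test did not fire. The residual bound \eqref{eq:ccgstepSOLres}: by construction $\hat r_k = \bar H y_j + g = r_j$ (this is the standard CG identity $r_j = \bar H y_j - (-g) = \bar H y_j + g$ for iterates started at $y_0 = 0$, $r_0 = g$), so $\|\hat r_k\| = \|r_j\| \le \hat\zeta\|r_0\| = \hat\zeta\|g\|$; then I must convert $\hat\zeta\|g\|$ into a bound of the form $\tfrac12\epsH\zeta\|d_k\|$, which requires a lower bound on $\|y_j\| = \|d_k\|$. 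The norm bound \eqref{eq:ccgstepSOLnorm} and this lower bound on $\|y_j\|$ are the crux: since all CG search directions satisfied $p_i^\top \bar H p_i \ge \epsH\|p_i\|^2$, the matrix $\bar H$ behaves like a $\succeq \epsH I$ matrix along the Krylov subspace, so standard CG estimates (collected in Appendix~\ref{app:cgresults}) give $\|y_j\| \le \|\bar H^{-1} g\|_{\text{restricted}} \le \epsH^{-1}\|g\|$ roughly, and the constant $1.1$ absorbs the slack from the inexactness/relative-accuracy $\hat\zeta$. For the lower bound, I would use that $\|r_j\| \le \hat\zeta\|r_0\|$ is small, so $\bar H y_j \approx -g$, whence $\|y_j\| \gtrsim \|g\|/\|\bar H\| \ge \|g\|/(U_H + 2\epsH)$; combining with $\hat\zeta = \zeta/(3\kappa)$ and $\kappa = (M+2\epsH)/\epsH$, the factor $1/(3\kappa)$ exactly cancels the $(U_H + 2\epsH)/\epsH$ slack (up to the constant), yielding $\hat\zeta\|g\| \le \tfrac12\epsH\zeta\|y_j\|$. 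I expect this chain of norm estimates, and getting the constants ($1.1$ and $\tfrac12$) to come out, to be the main obstacle — it is the part that genuinely uses the CG convergence theory rather than just the termination logic.

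For the NC case: when d\_type=NC, Algorithm~\ref{alg:dncg} sets $d_k = -\mathrm{sgn}(d^\top g)\,\frac{|d^\top H d|}{\|d\|^2}\,\frac{d}{\|d\|}$. The direction $d$ returned by Algorithm~\ref{alg:ccg} always satisfies $d^\top \bar H d < \epsH\|d\|^2$, equivalently $d^\top H d < -\epsH\|d\|^2 < 0$ — this holds whether $d$ is $p_0$, some $y_j$, some $p_j$, or the combination $y_{\hIcg+1} - y_i$ from \eqref{eq:weakcurvdir}; in the last case it is exactly the content of Theorem~\ref{theo:cvCGwhilestronglycvx} that such an $i$ exists and the ratio is $< \epsH$. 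So $d^\top H d/\|d\|^2 < -\epsH$. Now $d_k$ is a positive scalar multiple of $\pm d$, and by the sign choice $d_k^\top g = -\mathrm{sgn}(d^\top g)\,(\text{positive})\,|d^\top g|/\|d\| \cdot (\cdots) \le 0$, giving the first-order descent property $d_k^\top g \le 0$. For \eqref{eq:ccgstepNC}: the scaling is chosen precisely so that $\|d_k\| = |d^\top H d|/\|d\|^2 = -d^\top H d/\|d\|^2 > \epsH$, and since $d_k \parallel d$ we get $d_k^\top H d_k/\|d_k\|^2 = d^\top H d/\|d\|^2 = -\|d_k\|$; combining with $\|d_k\| > \epsH$ (actually $\ge \epsH$ suffices for the stated inequality) yields $d_k^\top H d_k/\|d_k\|^2 = -\|d_k\| \le -\epsH$. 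This case is essentially bookkeeping around the explicit formula for $d_k$, so I do not anticipate difficulty here beyond carefully tracking the normalization.
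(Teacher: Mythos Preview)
Your treatment of the NC case and of \eqref{eq:ccgstepSOLres} is essentially the paper's argument: for the residual bound, the paper also converts $\|\hat r_k\|\le\hat\zeta\|g\|$ into a bound in terms of $\|d_k\|$ via $\|g\|\le\|\bar H d_k\|+\|\hat r_k\|\le(M+2\epsH)\|d_k\|+\|\hat r_k\|$, which is exactly your ``lower bound on $\|y_j\|$'' written the other way around. Your NC bookkeeping matches the paper line for line.

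The gap is in your plan for \eqref{eq:ccgstepSOLnorm}. Appealing to ``standard CG estimates'' and an object like $\bar H^{-1}$ (even ``restricted'') is shaky here: $\bar H$ can be genuinely indefinite, and the only information you have is $p_i^\top\bar H p_i\ge\epsH\|p_i\|^2$ for conjugate (not orthogonal) directions, which does not by itself give a clean $\|y_j\|\le\epsH^{-1}\|g\|$ bound. The paper avoids this entirely by using the curvature condition on $d_k$ \emph{itself}, which you have already recorded as \eqref{eq:ccgstepSOLcurv}: from $\epsH\|d_k\|^2\le d_k^\top\bar H d_k\le\|d_k\|\,\|\bar H d_k\|$ one gets $\|d_k\|\le\epsH^{-1}\|\bar H d_k\|=\epsH^{-1}\|-g+\hat r_k\|$ in one step. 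The second ingredient you are missing is the CG residual orthogonality $r_0^\top r_j=0$ (Lemma~\ref{lemma:propertiesCGoneiter}, Property~2), i.e.\ $g^\top\hat r_k=0$, which turns the last norm into $\sqrt{\|g\|^2+\|\hat r_k\|^2}\le\sqrt{1+\hat\zeta^2}\,\|g\|$. This orthogonality is what produces the constant $1.1$; a bare triangle inequality would only give $1+\hat\zeta\le 7/6>1.1$. So the route you sketch both relies on an ill-defined object and, even if patched, would not hit the stated constant without the orthogonality step.
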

\begin{proof}
\refer{For simplicity of notation, we use $H=\nabla^2 f(x_k)$ and
  $g=\nabla f(x_k)$ in the proof.}  Suppose first that d\_type=SOL. In
that case, we have from the termination conditions in
Algorithm~\ref{alg:ccg} and \eqref{eq:def.rhat} that
\begin{subequations} \label{eq:propSOLstep}
\begin{align} 
\label{eq:propSOLstepCURV}
d_k^\top (H+2 \epsH I)d_k & \ge \epsH \|d_k\|^2, \\
\label{eq:propSOLstepRES}
\|\hat{r}_k \| &  \le \hat{\zeta} \|g\|,
\end{align}
\end{subequations}
\refer{where $\hat\zeta$ was returned by the algorithm.}
We immediately recognize~\eqref{eq:ccgstepSOLcurv}
in~\eqref{eq:propSOLstepCURV}.  We now
prove~\eqref{eq:ccgstepSOLnorm}. Observe first 
that \eqref{eq:propSOLstepCURV} yields
\[
\epsH\|d_k\|^2 \le d_k^\top (H+2 \epsH I) d_k \le \|d_k\| \|(H+2\epsH
I) d_k\|,
\]
so from \eqref{eq:def.rhat} we have
\begin{equation*}
\|d_k\| \le \epsH^{-1} \|(H+2\epsH I)d_k\| = \epsH^{-1} \|-g+\hat{r}_k\| 
= \epsH^{-1} \sqrt{\|g\|^2 + \| \hat{r}_k \|^2} \le 
\epsH^{-1} \sqrt{1+\hat{\zeta}^2}  \|g\|,
\end{equation*}
where we used~\eqref{eq:propSOLstepRES} to obtain the final bound,
together with the equality $\|-g+\hat{r}_k\|^2=\|g\|^2+\| \hat{r}_k
\|^2$, which follows from $g^\top \hat{r}_k = r_0^\top \hat{r}_k =
0$, by orthogonality of the residuals in CG (see
Lemma~\ref{lemma:propertiesCGoneiter}, Property 2). Since
\refer{$\hat{\zeta} \le \zeta/(3 \kappa) \le 1/6$ by construction, we
  have $\|d_k \| \le \sqrt{37/36} \epsH^{-1} \|g\| \le 1.1 \epsH^{-1}
  \|g\|$, proving \eqref{eq:ccgstepSOLnorm}.}

The bound \eqref{eq:ccgstepSOLres} follows
from~\eqref{eq:propSOLstepRES} and the logic below:
\begin{align*}
	\| \hat{r}_k \| &\le \hat{\zeta} \|g\| \le 
	\hat{\zeta}\left(\|(H+2 \epsH I) d_k \| + 
	\| \hat{r}_k \|\right)  \le \hat\zeta \left( (M+2\epsH) \|d_k \| +  \| \hat{r}_k \| \right) \\
        \Rightarrow \;\; \| \hat{r}_k \| &\le 
	\frac{\hat{\zeta}}{1-\hat{\zeta}}(M+2 \epsH) \|d_k\|,
\end{align*}
\refer{where $M$ is the value returned by the algorithm.}
We finally use \refer{$\hat\zeta < 1/6$}
to arrive at
\[
\frac{\hat\zeta}{1-\hat{\zeta}} (M+2\epsH) \le \frac65 \hat{\zeta}
(M+2\epsH) = \frac65 \frac{\zeta \epsH}{3} < \frac12 \zeta \epsH,
\]
yielding  \eqref{eq:ccgstepSOLres}.

In the case of d\_type=NC, we recall that Algorithm~\ref{alg:dncg}
defines
\begin{equation} \label{eq:scaleccgNC}
d_k = -\mathrm{sgn}(d^\top g) \frac{|d^\top H
  d|}{\|d\|^2} \frac{d}{\|d\|}
\end{equation}
where $d$ denotes the direction obtained by
Algorithm~\ref{alg:ccg}. It follows immediately that $d_k^\top g \le
0$. Since $d_k$ and $d$ are collinear, we also have that
\refer{
\[
\frac{d_k^\top (H+2\epsH I)d_k}{\|d_k\|^2} = \frac{d^\top (H+2\epsH
  I)d}{\|d\|^2} \le \epsH \; \Rightarrow \; \frac{d_k^\top H
  d_k}{\|d_k\|^2}
\le -\epsH.
\]
By using this bound together with \eqref{eq:scaleccgNC}, we obtain
\[
\|d_k\| = \frac{|d^\top H d|}{\|d\|^2} = \frac{|d_k^\top H d_k|}{\|d_k\|^2} =
-\frac{d_k^\top H d_k}{\|d_k\|^2} \ge \epsH,
\]
proving \eqref{eq:ccgstepNC}.
}
\end{proof}

\subsection{First-Order Complexity Analysis}
\label{subsec:wcc1}

We now find a bound on the number of iterations and the amount of
computation required to identify an iterate $x_k$ for which $\| \nabla
f(x_k) \| \le \epsg$. We consider in turn the two types of steps
(approximate damped Newton and negative curvature), finding a lower
bound on the descent in $f$ achieved on the current iteration in each
case. We then prove an upper bound on the number of iterations
required to satisfy these approximate first-order conditions
(Theorem~\ref{theo:wcc1}) and an upper bound on the number of gradient
evaluations and Hessian-vector multiplications required
(Theorem~\ref{theo:wccits}).

We start with a lemma concerning the approximate damped Newton steps.
\begin{lemma} \label{lemma:LSccgSOL}
Suppose that Assumptions~\ref{assum:compactlevelset}
and~\ref{assum:fC22} hold. Suppose that at iteration $k$ of
Algorithm~\ref{alg:dncg}, we have $\|\nabla f(x_k)\| > \epsg$, so
that Algorithm~\ref{alg:ccg} is called. When Algorithm~\ref{alg:ccg}
outputs a direction $d_k$ with d\_type=SOL, then the backtracking line
search requires at most $j_k \le \jsol+1$ iterations, where
\begin{equation} \label{eq:ccglsitsSOL}
\jsol \; = \; \left[ \frac{1}{2}\log_{\theta}\left(
  \frac{3(1-\zeta)}{L_H+\eta} \frac{\epsH^2}{\refer{1.1} U_g} \right)
  \right]_+,
\end{equation}
and the resulting step $x_{k+1} = x_k + \alpha_k d_k$ satisfies
\begin{equation} \label{eq:ccglsdecreaseSOL}
f(x_k) - f(x_{k+1}) \; \ge \; 
\csol\min\left(\|\nabla f(x_{k+1})\|^3 \epsH^{-3},\epsH^3\right),
\end{equation}
where 
\begin{equation*}
\csol = \frac{\eta}{6}\min\left\{ \left[ \frac{4}{\sqrt{(4+\zeta)^2+8 L_H}+4+\zeta}
  \right]^3, \left[ \frac{3\theta^2 (1-\zeta) }{L_H+\eta}
  \right]^3\right\}.
\end{equation*}
\end{lemma}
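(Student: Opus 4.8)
The plan is to combine the cubic Taylor bound \eqref{eq:LH} with the three properties of a d\_type=SOL direction recorded in Lemma~\ref{lemma:ccgsteps} --- the damped curvature bound \eqref{eq:ccgstepSOLcurv}, the norm bound \eqref{eq:ccgstepSOLnorm}, and the residual bound \eqref{eq:ccgstepSOLres} on $\hat r_k$ from \eqref{eq:def.rhat}. First I would note that $d_k$ is a descent direction: substituting $\nabla f(x_k)=\hat r_k-(\nabla^2 f(x_k)+2\epsH I)d_k$ and using \eqref{eq:ccgstepSOLcurv}, \eqref{eq:ccgstepSOLres} gives $\nabla f(x_k)^\top d_k\le \tfrac12\zeta\epsH\|d_k\|^2-\epsH\|d_k\|^2<0$, so the line search is well defined. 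Throughout, the iterates stay in $\mathcal{L}_f(x_0)$ because \eqref{eq:lsdecreasedamped} forces strict decrease, so $\|\nabla f(x_k)\|\le U_g$ by \eqref{eq:boundscompact}.

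Next I would establish the threshold controlling the line search. For $\alpha\in(0,1]$ write $Q:=d_k^\top(\nabla^2 f(x_k)+2\epsH I)d_k\ge\epsH\|d_k\|^2$, so that $\alpha\nabla f(x_k)^\top d_k+\tfrac12\alpha^2 d_k^\top\nabla^2 f(x_k)d_k=\alpha\hat r_k^\top d_k-(\alpha-\tfrac12\alpha^2)Q-\alpha^2\epsH\|d_k\|^2$; bounding the first term by Cauchy--Schwarz and \eqref{eq:ccgstepSOLres}, using $\alpha-\tfrac12\alpha^2\ge\tfrac12\alpha$, and dropping the strictly negative $-\alpha^2\epsH\|d_k\|^2$ shows this quantity plus $\tfrac16(L_H+\eta)\alpha^3\|d_k\|^3$ is $<-\tfrac12\alpha(1-\zeta)\epsH\|d_k\|^2+\tfrac16(L_H+\eta)\alpha^3\|d_k\|^3\le0$ whenever $\alpha\le\bar\alpha_k:=\bigl(3(1-\zeta)\epsH/((L_H+\eta)\|d_k\|)\bigr)^{1/2}$. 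Combined with \eqref{eq:LH} this means \eqref{eq:lsdecreasedamped} holds for every $\alpha\le\min\{1,\bar\alpha_k\}$; since backtracking accepts the first $\theta^{j}$ below this threshold, $j_k\le\lceil\log_\theta\bar\alpha_k\rceil$, and plugging $\|d_k\|\le 1.1\epsH^{-1}\|\nabla f(x_k)\|\le 1.1\epsH^{-1}U_g$ into $\bar\alpha_k$ and taking logs yields $j_k\le\jsol+1$ with $\jsol$ as in \eqref{eq:ccglsitsSOL}.

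For the decrease \eqref{eq:ccglsdecreaseSOL} I would use $f(x_k)-f(x_{k+1})\ge\tfrac{\eta}{6}\|x_{k+1}-x_k\|^3$ and lower-bound $\|x_{k+1}-x_k\|$ by a constant times $\min\{\|\nabla f(x_{k+1})\|/\epsH,\epsH\}$, splitting on whether $\alpha_k<1$. If $\alpha_k<1$, the rejected step $\theta^{j_k-1}$ exceeds $\bar\alpha_k$, so $\bar\alpha_k<1$ (hence $\|d_k\|>3(1-\zeta)\epsH/(L_H+\eta)$) and $\alpha_k=\theta\,\theta^{j_k-1}>\theta\bar\alpha_k$, giving $\|x_{k+1}-x_k\|=\alpha_k\|d_k\|>\theta\bigl(3(1-\zeta)\epsH/(L_H+\eta)\bigr)^{1/2}\|d_k\|^{1/2}>\tfrac{3\theta(1-\zeta)}{L_H+\eta}\epsH\ge\tfrac{3\theta^2(1-\zeta)}{L_H+\eta}\epsH$. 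If $\alpha_k=1$, then $x_{k+1}-x_k=d_k$, and $\nabla f(x_k)=\hat r_k-(\nabla^2 f(x_k)+2\epsH I)d_k$ together with a first-order expansion of $\nabla f$ with Lipschitz remainder gives $\|\nabla f(x_{k+1})\|\le\tfrac12(4+\zeta)\epsH\|d_k\|+\tfrac12 L_H\|d_k\|^2$; solving this quadratic inequality for $\|d_k\|$ and rationalizing gives $\|d_k\|\ge 2\|\nabla f(x_{k+1})\|\big/\bigl(\sqrt{\tfrac14(4+\zeta)^2\epsH^2+2L_H\|\nabla f(x_{k+1})\|}+\tfrac12(4+\zeta)\epsH\bigr)$, whose right-hand side is increasing in $\|\nabla f(x_{k+1})\|$. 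Hence it is at least $\tfrac{4}{\sqrt{(4+\zeta)^2+8L_H}+4+\zeta}\cdot\tfrac{\|\nabla f(x_{k+1})\|}{\epsH}$ when $\|\nabla f(x_{k+1})\|\le\epsH^2$, and at least its value at $\epsH^2$, namely $\tfrac{4}{\sqrt{(4+\zeta)^2+8L_H}+4+\zeta}\,\epsH$, otherwise. Taking the smaller of the two constants arising in the two cases, cubing, and multiplying by $\tfrac{\eta}{6}$ produces exactly \eqref{eq:ccglsdecreaseSOL} with the stated $\csol$.

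The main obstacle I anticipate is the sub-case $\alpha_k=1$ with $\|\nabla f(x_{k+1})\|$ not small: a crude estimate of $\|\nabla f(x_{k+1})\|$ that retains the term $(1-\alpha_k)\nabla^2 f(x_k)d_k$ would bring in $\|\nabla^2 f(x_k)\|\le U_H$, which must not appear in $\csol$. This is circumvented precisely because that term vanishes at $\alpha_k=1$, while the complementary case $\alpha_k<1$ is handled through the backtracking lower bound on $\alpha_k$ and supplies the second term of the minimum in $\csol$; and the monotonicity of the quadratic-inversion bound is what lets the regime $\|\nabla f(x_{k+1})\|>\epsH^2$ be folded into the same constant rather than incurring a dependence on $U_g$. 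The remaining work --- handling the $[\cdot]_+$ and the ceiling in $j_k\le\jsol+1$, and verifying that the slack from the discarded $-\alpha^2\epsH\|d_k\|^2$ term makes the strict inequality in \eqref{eq:lsdecreasedamped} hold --- is routine bookkeeping.
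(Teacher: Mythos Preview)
Your proposal is correct and complete. The paper's own proof is a one-line pointer to \cite[Lemma~13]{CWRoyer_SJWright_2018}, noting only that the probabilistic norm bound used there is replaced by the deterministic estimate \eqref{eq:ccgstepSOLnorm}; what you have written is a faithful reconstruction of that referenced argument --- the Taylor-cubic/backtracking threshold $\bar\alpha_k$, the case split on $\alpha_k=1$ versus $\alpha_k<1$, the quadratic inversion producing the constant $4/(\sqrt{(4+\zeta)^2+8L_H}+4+\zeta)$, and the backtracking lower bound producing the other constant in $\csol$. Your $\alpha_k<1$ estimate is in fact a factor $\theta$ sharper than needed, which you correctly relax to match the stated $\csol$.
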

\begin{proof}
The proof tracks closely that
of~\cite[Lemma~13]{CWRoyer_SJWright_2018}. The only significant
difference is that equation (65) of~\cite{CWRoyer_SJWright_2018},
which is instrumental to the proof and requires a probabilistic
assumption on $\lambda_{\min}(\nabla^2 f(x_k))$, is now ensured
\emph{deterministically} by~\eqref{eq:ccgstepSOLnorm} from
Lemma~\ref{lemma:ccgsteps}. As a result, both the proof and the result
are deterministic.
\end{proof}

When $\| \nabla f(x_{k+1}) \| \le \epsg$, the estimate
\eqref{eq:ccglsdecreaseSOL} may not guarantee a ``significant''
decrease in $f$ at this iteration.  However, in this case, the
approximate first-order condition $\| \nabla f(x) \| \le \epsg$ holds
at the {\em next} iteration, so that Algorithm~\ref{alg:dncg} will
invoke Procedure~\ref{alg:meo} at iteration $k+1$, leading either to
termination with satisfaction of the conditions \eqref{eq:2epson} or
to a step that reduces $f$ by a multiple of $\epsH^3$, as we show in
Theorem~\ref{theo:wccits} below.

We now address the case in which Algorithm~\ref{alg:ccg} returns a
negative curvature direction to Algorithm~\ref{alg:dncg} at iteration
$k$. The backtracking line search guarantees that a sufficient
decrease will be achieved at such an iteration. Although the Lipschitz
constant $L_H$ appears in our result, our algorithm (in contrast to
\cite{YCarmon_JCDuchi_OHinder_ASidford_2017b}) does not require
this constant to be known or estimated.

\begin{lemma} \label{lemma:LSccgNC}
  Suppose that Assumptions~\ref{assum:compactlevelset}
  and~\ref{assum:fC22} hold.  Suppose that at iteration $k$ of
  Algorithm~\ref{alg:dncg}, we have $\|\nabla f(x_k)\| > \epsg$, so
  that Algorithm~\ref{alg:ccg} is called. When Algorithm~\ref{alg:ccg}
  outputs d\_type=NC, the direction $d_k$ (computed from $d$ in
  Algorithm~\ref{alg:dncg}) has the following properties: The
  backtracking line search terminates with step length $\alpha_k =
  \theta^{j_k}$ with $j_k \le \jnc +1$, where
\begin{equation} \label{eq:eiglsits}	
\jnc := \left[ \log_{\theta}\left( \frac{3}{L_H+\eta} \right)
  \right]_+,
\end{equation}
and the resulting step $x_{k+1} = x_k + \alpha_k d_k$ satisfies
\begin{equation} \label{eq:decreaseNC}
f(x_k) - f(x_k+\alpha_k\,d_k) \; \geq \; \cnc \epsH^3,
\end{equation}	
with
\begin{equation*}
\cnc := \frac{\eta}{6}
\min\left\{1,\frac{27\theta^3}{(L_H+\eta)^3}\right\}.
\end{equation*}
\end{lemma}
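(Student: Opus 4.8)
The plan is to establish the two claims --- the bound $j_k \le \jnc+1$ on the number of backtracking steps, and the sufficient-decrease inequality \eqref{eq:decreaseNC} --- by exploiting the curvature and descent properties of the NC direction recorded in item~2 of Lemma~\ref{lemma:ccgsteps}. The key normalization facts are that $d_k^\top \nabla f(x_k) \le 0$ and that $d_k$ has been scaled so that $d_k^\top \nabla^2 f(x_k) d_k / \|d_k\|^2 = -\|d_k\| \le -\epsH$; in particular $\|d_k\| \ge \epsH$, and $d_k^\top \nabla^2 f(x_k) d_k = -\|d_k\|^3$. First I would plug $d = \alpha d_k$ into the cubic Lipschitz bound \eqref{eq:LH}: for any step length $\alpha > 0$,
\begin{equation*}
f(x_k + \alpha d_k) \le f(x_k) + \alpha \nabla f(x_k)^\top d_k + \frac{\alpha^2}{2} d_k^\top \nabla^2 f(x_k) d_k + \frac{L_H}{6}\alpha^3 \|d_k\|^3.
\end{equation*}
Dropping the (nonpositive) gradient term and substituting $d_k^\top \nabla^2 f(x_k) d_k = -\|d_k\|^3$ gives
\begin{equation*}
f(x_k + \alpha d_k) \le f(x_k) - \frac{\alpha^2}{2}\|d_k\|^3 + \frac{L_H}{6}\alpha^3 \|d_k\|^3 = f(x_k) - \frac{\alpha^2}{2}\|d_k\|^3\left(1 - \frac{L_H}{3}\alpha\right).
\end{equation*}

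Next I would compare this with the Armijo-type acceptance test \eqref{eq:lsdecreasedamped}, namely $f(x_k+\alpha d_k) < f(x_k) - \frac{\eta}{6}\alpha^3\|d_k\|^3$. It suffices to find the threshold on $\alpha$ below which
\begin{equation*}
-\frac{\alpha^2}{2}\|d_k\|^3\left(1 - \frac{L_H}{3}\alpha\right) \le -\frac{\eta}{6}\alpha^3\|d_k\|^3,
\end{equation*}
which after dividing by $\alpha^2 \|d_k\|^3/2$ rearranges to $1 - \frac{L_H}{3}\alpha \ge \frac{\eta}{3}\alpha$, i.e.\ $\alpha \le \frac{3}{L_H + \eta}$. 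Hence any step length $\alpha \le \min\{1, \frac{3}{L_H+\eta}\}$ is accepted. Because the backtracking search tries $\alpha = 1, \theta, \theta^2, \dots$ and stops at the first accepted value, it terminates once $\theta^{j} \le \frac{3}{L_H+\eta}$ (and $\theta^j \le 1$ is automatic), which happens for $j \ge \log_\theta\!\bigl(\tfrac{3}{L_H+\eta}\bigr)$; taking the positive part gives exactly $\jnc$, and the ``$+1$'' accounts for $j_k$ being the first index at which $\theta^{j_k}$ drops below the threshold. This yields $j_k \le \jnc + 1$.

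For the decrease bound \eqref{eq:decreaseNC}, I would use the accepted step length $\alpha_k = \theta^{j_k}$. Either $j_k = 0$, in which case $\alpha_k = 1$, or $j_k \ge 1$, in which case $\theta^{j_k - 1} > \frac{3}{L_H+\eta}$ would have meant acceptance at the previous step is not guaranteed --- rather, the correct statement is that $\alpha_k = \theta^{j_k} \ge \theta \cdot \theta^{j_k-1} \ge \theta \min\{1, \frac{3}{L_H+\eta}\}$, using that $\theta^{j_k - 1}$ exceeds the acceptance threshold (or $j_k=0$). So in all cases $\alpha_k \ge \theta\min\{1, \frac{3}{L_H+\eta}\} = \min\{\theta, \frac{3\theta}{L_H+\eta}\}$. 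Since the step is accepted, $f(x_k) - f(x_k + \alpha_k d_k) \ge \frac{\eta}{6}\alpha_k^3 \|d_k\|^3 \ge \frac{\eta}{6}\alpha_k^3 \epsH^3$, where I used $\|d_k\| \ge \epsH$. Substituting the lower bound on $\alpha_k$ gives $f(x_k) - f(x_k+\alpha_k d_k) \ge \frac{\eta}{6}\min\{\theta^3, \frac{27\theta^3}{(L_H+\eta)^3}\}\epsH^3 = \cnc\,\epsH^3$, noting $\theta^3 = 1\cdot\theta^3$ matches the ``$1$'' inside the min after pulling $\theta^3$ out --- actually $\cnc = \frac{\eta}{6}\min\{1, \frac{27\theta^3}{(L_H+\eta)^3}\}$ requires care: one has $\min\{\theta^3, \frac{27\theta^3}{(L_H+\eta)^3}\} = \theta^3\min\{1,\frac{27}{(L_H+\eta)^3}\}$, and since $\theta \in (0,1)$ this is at least $\theta^3\cdot$ (that min); if instead $L_H + \eta \ge 3$ then $\frac{27}{(L_H+\eta)^3} \le 1$ and the bound is $\frac{\eta}{6}\cdot\frac{27\theta^3}{(L_H+\eta)^3}\epsH^3$, while if $L_H+\eta < 3$ the accepted $\alpha_k$ can be taken as $\theta$ (since $\frac{3}{L_H+\eta} > 1$) giving $\frac{\eta}{6}\theta^3\epsH^3 \ge \frac{\eta}{6}\cdot 1\cdot\epsH^3$ after absorbing $\theta^3 \le 1$ into the min --- wait, that goes the wrong way. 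The clean route, which I would follow, is to note that when $L_H+\eta<3$, $\alpha_k=1$ is already accepted (threshold $>1$), so $j_k=0$, $\alpha_k=1$, and the decrease is $\ge\frac{\eta}{6}\epsH^3$; when $L_H+\eta\ge3$, $\alpha_k\ge\frac{3\theta}{L_H+\eta}$ and the decrease is $\ge\frac{\eta}{6}\cdot\frac{27\theta^3}{(L_H+\eta)^3}\epsH^3$. Combining the two cases gives precisely $f(x_k)-f(x_{k+1})\ge\cnc\epsH^3$. I expect the main obstacle to be nothing deep --- it is purely a matter of organizing the case analysis on the acceptance threshold cleanly so that the two pieces of $\cnc$ emerge naturally; the proof in \cite{CWRoyer_SJWright_2018} handles an essentially identical argument, so I would largely import that reasoning, substituting the now-deterministic curvature estimate \eqref{eq:ccgstepNC}.
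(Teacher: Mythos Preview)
Your proposal is correct and follows the same approach as the paper, which simply invokes \eqref{eq:ccgstepNC} from Lemma~\ref{lemma:ccgsteps} and then defers to \cite[Lemma~1]{CWRoyer_SJWright_2018}; you have essentially written out that deferred argument in full. The mid-argument wobbling over how $\theta^3$ interacts with the $\min$ is resolved correctly by your final two-case split on $L_H+\eta \lessgtr 3$, and that is exactly the clean way to recover the stated form of $\cnc$.
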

\begin{proof}
  \refer{
By Lemma~\ref{lemma:ccgsteps}, we have from  \eqref{eq:ccgstepNC}
that
\begin{equation} \label{eq:boundnegcurvNCsteps}
		d_k^\top \nabla^2 f(x_k) d_k = -\|d_k\|^3 \le -\epsH \|d_k\|^2
\end{equation}
}
The result can thus be obtained exactly as in
\cite[Lemma~1]{CWRoyer_SJWright_2018}.
\end{proof}

\refer{We are ready to state our main result for first-order
  complexity.}

\begin{theorem} \label{theo:wcc1}
Let Assumptions~\ref{assum:compactlevelset} and~\ref{assum:fC22}
hold. Then, defining
\[
\bar{K}_1 := \left\lceil \frac{f(x_0)-\flow}{\min\{\csol,\cnc\}} 
\max\left\{\epsg^{-3}\epsH^3,\epsH^{-3}\right\} \right\rceil,
\]
some iterate $x_k$, $k=0,1,\dotsc,\bar{K}_1+1$ generated by
Algorithm~\ref{alg:dncg} will satisfy
\begin{equation} \label{eq:1epson}
 \|\nabla f(x_k)\| \le \epsg.
\end{equation} 
\end{theorem}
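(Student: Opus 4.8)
The plan is to argue by contradiction, along the standard "monotone sufficient decrease forces termination" lines. Suppose that $\|\nabla f(x_k)\| > \epsg$ for all $k = 0, 1, \dots, \bar{K}_1 + 1$. Then at every such iteration Algorithm~\ref{alg:dncg} enters the first branch, calls Algorithm~\ref{alg:ccg}, and produces a step $d_k$ that is either of type SOL or of type NC, followed by a successful backtracking line search satisfying \eqref{eq:lsdecreasedamped}. I would then invoke the two decrease lemmas just proved: Lemma~\ref{lemma:LSccgNC} gives $f(x_k) - f(x_{k+1}) \ge \cnc \epsH^3$ whenever d\_type=NC, and Lemma~\ref{lemma:LSccgSOL} gives $f(x_k) - f(x_{k+1}) \ge \csol \min\{\|\nabla f(x_{k+1})\|^3 \epsH^{-3}, \epsH^3\}$ whenever d\_type=SOL.

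The key observation handling the SOL case is that, under our contradiction hypothesis, $\|\nabla f(x_{k+1})\| > \epsg$ for all the relevant $k$, so $\min\{\|\nabla f(x_{k+1})\|^3 \epsH^{-3}, \epsH^3\} \ge \min\{\epsg^3 \epsH^{-3}, \epsH^3\}$. Combining this with the NC bound, each of the iterations $k = 0, \dots, \bar{K}_1$ achieves
\[
f(x_k) - f(x_{k+1}) \ge \min\{\csol, \cnc\} \min\{\epsg^3 \epsH^{-3}, \epsH^3\} = \min\{\csol,\cnc\} \big(\max\{\epsg^{-3}\epsH^3, \epsH^{-3}\}\big)^{-1}.
\]
Summing this telescoping inequality over $k = 0, 1, \dots, \bar{K}_1$ gives
\[
f(x_0) - f(x_{\bar{K}_1 + 1}) \ge (\bar{K}_1 + 1)\, \frac{\min\{\csol,\cnc\}}{\max\{\epsg^{-3}\epsH^3, \epsH^{-3}\}} > f(x_0) - \flow,
\]
where the last strict inequality uses the definition of $\bar{K}_1$ (the ceiling makes $\bar{K}_1 + 1$ strictly exceed $(f(x_0)-\flow)/\big(\min\{\csol,\cnc\} \cdot (\max\{\epsg^{-3}\epsH^3,\epsH^{-3}\})^{-1}\big)$). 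This yields $f(x_{\bar{K}_1+1}) < \flow$, contradicting the lower bound $f(x) \ge \flow$ on $\mathcal{L}_f(x_0)$ from \eqref{eq:boundscompact} (noting that all iterates remain in the level set since $f$ is monotonically decreasing along the iterations). Hence the hypothesis fails, and some iterate among $x_0, \dots, x_{\bar{K}_1+1}$ satisfies \eqref{eq:1epson}.

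I do not anticipate a genuine obstacle here; the lemmas do all the heavy lifting. The one point requiring a little care is the interplay between the SOL decrease bound (which is stated in terms of $\|\nabla f(x_{k+1})\|$ at the \emph{next} iterate) and the indexing: I must make sure the contradiction hypothesis is assumed through index $\bar{K}_1 + 1$ so that $\|\nabla f(x_{k+1})\| > \epsg$ is available for every $k \le \bar{K}_1$, which is exactly why the theorem statement ranges over $k = 0, \dots, \bar{K}_1 + 1$ rather than stopping at $\bar{K}_1$. A secondary bookkeeping point is to confirm that iterates stay in $\mathcal{L}_f(x_0)$ so that \eqref{eq:boundscompact} applies; this is immediate from \eqref{eq:lsdecreasedamped}, which guarantees strict decrease of $f$ at every step.
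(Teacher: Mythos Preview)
Your proposal is correct and follows essentially the same argument as the paper's proof: a contradiction argument using the per-iteration decrease bounds from Lemmas~\ref{lemma:LSccgSOL} and~\ref{lemma:LSccgNC}, combined with the contradiction hypothesis $\|\nabla f(x_{k+1})\|>\epsg$ to lower-bound the SOL decrease, then telescoping over $k=0,\dots,\bar{K}_1$ to contradict $f \ge \flow$. Your remarks on the indexing (why the range extends to $\bar{K}_1+1$) and on iterates remaining in the level set match the paper's reasoning precisely.
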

\begin{proof}
Suppose for contradiction that $\| \nabla f(x_k) \| > \epsg$ for all
$k=0,1,\dotsc,\bar{K}_1+1$, so that
\begin{equation} \label{eq:jh98}
  \| \nabla f(x_{l+1}) \| > \epsg, \quad l=0,1,\dotsc,\bar{K}_1.
\end{equation}
Algorithm~\ref{alg:ccg} will be invoked at each of the first
$\bar{K}_1+1$ iterates of Algorithm~\ref{alg:dncg}. For each iteration
$l=0,1,\dotsc,\bar{K}_1$ for which Algorithm~\ref{alg:ccg} returns
d\_type=SOL, we have from Lemma~\ref{lemma:LSccgSOL} and
\eqref{eq:jh98} that
\begin{equation} \label{eq:wcc1decSOL}
f(x_l) - f(x_{l+1}) \ge \csol \min\left\{\|\nabla
f(x_{l+1})\|^3\epsH^{-3}, \epsH^3\right\} \ge \csol
\min\left\{\epsg^3\epsH^{-3}, \epsH^3\right\}.
\end{equation}	
For each iteration $l=0,1,\dotsc,\bar{K}_1$ for which
Algorithm~\ref{alg:ccg} returns d\_type=NC, we have by
Lemma~\ref{lemma:LSccgNC} that
\begin{equation} \label{eq:wcc1decNC}
f(x_l) - f(x_{l+1}) \ge \cnc \epsH^3.
\end{equation}
By combining these results, we obtain
\begin{align*}
f(x_0) - f(x_{\bar{K}_1+1}) & \ge \sum_{l=0}^{\bar{K}_1} (f(x_l)-f(x_{l+1})) \\
& \ge \sum_{l=0}^{\bar{K}_1} \min \{\csol,\cnc\}\min\left\{\epsg^3\epsH^{-3},
  \epsH^3\right\} \\
  & = (\bar{K}_1+1)  \min \{\csol,\cnc\}\min\left\{\epsg^3\epsH^{-3},
  \epsH^3\right\} \\
  & >  f(x_0)-\flow.
\end{align*}
where we used the definition of $\bar{K}_1$ for the final
inequality. This inequality contradicts the definition of $\flow$ in
\eqref{eq:boundscompact}, so our claim is proved.
\end{proof}

If we choose $\epsH$ in the range $[\epsg^{1/3},\epsg^{2/3}]$, this bound
improves over the classical $\mathcal{O}(\epsg^{-2})$ rate of
gradient-based methods. 
The choice $\epsH=\epsg^{1/2}$ yields the rate
$\mathcal{O}(\epsg^{-3/2})$, which is known to be optimal among
second-order methods~\cite{CCartis_NIMGould_PhLToint_2011c}.

Recalling that the workload of Algorithm~\ref{alg:ccg} in terms of
Hessian-vector products depends on the index $J$ defined by
\eqref{eq:HvcountcappedCG}, we obtain the following
corollary. \refer{(Note the mild assumption on the quantities of $M$
  used at each instance of Algorithm~\ref{alg:ccg}, which is satisfied
  provided that this algorithm is always invoked with an initial
  estimate of $M$ in the range $[0,U_H]$.)}
%
\begin{corollary} \label{coro:wcc1Hv}
  Suppose that the assumptions of Theorem~\ref{theo:wcc1} are
  satisfied, and let $\bar{K}_1$ be as defined in that theorem and
  $J(M,\epsH,\zeta)$ be as defined in
  \eqref{eq:HvcountcappedCG}. \refer{Suppose that the values of $M$
    used or calculated at each instance of Algorithm~\ref{alg:ccg}
    satisfy $M \le U_H$.}
  Then the number of Hessian-vector
  products and/or gradient evaluations required by
  Algorithm~\ref{alg:dncg} to output an iterate
  satisfying~\eqref{eq:1epson} is at most
  \[
  \left(2\min \left\{n,J(U_H,\epsH,\zeta) \right\}+2 \right)(\bar{K}_1+1). 
  \]
  \refer{For $n$ sufficiently large, this bound is $ \tcO \left(
    \max\left\{\epsg^{-3} \epsH^{5/2},\epsH^{-7/2} \right\}\right)$,
    while if \\$J(U_H,\epsH,\zeta) \ge n$, the bound is $\tcO \left(
    n\,\max\left\{ \epsg^{-3} \epsH^{3},\epsH^{-3}\right\}\right)$.}
  \end{corollary}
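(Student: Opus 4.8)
The plan is to combine the outer-iteration bound from Theorem~\ref{theo:wcc1} with the per-call work estimate for Algorithm~\ref{alg:ccg} from Lemma~\ref{lemma:ccgits}, and then substitute the known rates for the two quantities involved. First I would note that, by Theorem~\ref{theo:wcc1}, some iterate among $x_0,\dots,x_{\bar{K}_1+1}$ satisfies \eqref{eq:1epson}; hence in the worst case Algorithm~\ref{alg:dncg} performs at most $\bar{K}_1+1$ iterations at which $\|\nabla f(x_k)\|>\epsg$, and at each of these Algorithm~\ref{alg:ccg} (rather than Procedure~\ref{alg:meo}) is invoked.

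Next I would bound the computation done at one such iteration $k$. Forming $g=\nabla f(x_k)$ costs one gradient evaluation. The ensuing call to Algorithm~\ref{alg:ccg} costs at most $2\min\{n,J(M,\epsH,\zeta)\}+1$ Hessian-vector products, using the no-storage figure from Lemma~\ref{lemma:ccgits} (the case in which the iterates $y_i$ are regenerated to define the direction returned after \eqref{eq:weakcurvdir}); the backtracking line search in Algorithm~\ref{alg:dncg} needs only function values, which are not part of the unit of computation. So the per-iteration cost is at most $2\min\{n,J(M,\epsH,\zeta)\}+2$, where $M$ is the value that Algorithm~\ref{alg:ccg} returns at iteration $k$. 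By hypothesis $M\le U_H$, and since $J(\cdot,\epsH,\zeta)$ is nondecreasing in its first argument (as observed just after Lemma~\ref{lemma:ccgits}), this is at most $2\min\{n,J(U_H,\epsH,\zeta)\}+2$. Multiplying by the iteration count $\bar{K}_1+1$ gives the displayed bound.

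Finally I would read off the asymptotics. By definition $\bar{K}_1=\mathcal{O}(\max\{\epsg^{-3}\epsH^{3},\epsH^{-3}\})$, and by \eqref{eq:HvcountcappedCG} we have $J(U_H,\epsH,\zeta)=\tcO(\epsH^{-1/2})$. If $n$ is large enough that $J(U_H,\epsH,\zeta)\le n$, then $\min\{n,J(U_H,\epsH,\zeta)\}=\tcO(\epsH^{-1/2})$ and the product is $\tcO(\epsH^{-1/2}\max\{\epsg^{-3}\epsH^{3},\epsH^{-3}\})=\tcO(\max\{\epsg^{-3}\epsH^{5/2},\epsH^{-7/2}\})$; if instead $J(U_H,\epsH,\zeta)\ge n$, the factor is just $n$ and the product is $\mathcal{O}(n\max\{\epsg^{-3}\epsH^{3},\epsH^{-3}\})$. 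I do not expect a genuine obstacle here: the only care needed is in selecting the correct variant of Lemma~\ref{lemma:ccgits} (the no-storage one, hence the factor $2$) and in the monotonicity argument that lets the adaptively updated $M$ be replaced by the a priori bound $U_H$; everything else is substitution of rates already established.
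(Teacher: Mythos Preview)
Your proposal is correct and follows essentially the same approach as the paper: combine the per-iteration workload from Lemma~\ref{lemma:ccgits} (no-storage variant) plus one gradient evaluation with the outer-iteration bound $\bar{K}_1+1$ from Theorem~\ref{theo:wcc1}, then read off the asymptotics from \eqref{eq:HvcountcappedCG}. Your explicit use of the monotonicity of $J(\cdot,\epsH,\zeta)$ to pass from the adaptive $M$ to $U_H$ is a detail the paper leaves implicit, but otherwise the arguments coincide.
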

\begin{proof}
  \refer{From Lemma~\ref{lemma:ccgits},} the number of Hessian-vector
  multiplications in the main loop of Algorithm~\ref{alg:ccg} is
  bounded by \refer{$\min \left\{ n, J(U_H,\epsH,\zeta)+1
    \right\}$}. An additional \refer{$\min \left\{n, J(U_H,\epsH,\zeta)
    \right\}$} Hessian-vector products may be needed to return a
  direction satisfying~\eqref{eq:weakcurvdir}, \refer{if
    Algorithm~\ref{alg:ccg} does not store its iterates $y_j$.} Each
  iteration also requires a single evaluation of the gradient $\nabla
  f$, giving a bound of \refer{$(2 \min
    \left\{n,J(U_H,\epsH,\zeta)\right\}+2)$} on the workload per
  iteration of Algorithm~\ref{alg:dncg}. We multiply this quantity by
  the iteration bound from Theorem~\ref{theo:wcc1} to obtain the
  result.
\end{proof}

By setting $\epsH=\epsg^{1/2}$, we obtain from this corollary a
computational bound of $\tcO(\epsg^{-7/4})$ \refer{(for $n$
  sufficiently large)}, which matches the deterministic first-order
guarantee obtained in \cite{YCarmon_JCDuchi_OHinder_ASidford_2017b},
and also improves over the $\mathcal{O}(\eps_g^{-2})$ computational
complexity of gradient-based methods.

\subsection{Second-Order Complexity Results}
\label{subsec:wcc2}

We now find bounds on iteration and computational complexity of
finding a point that satisfies \eqref{eq:2epson}. In this section, as
well as using results from Sections~\ref{subsec:wccccg}
and~\ref{subsec:wcc1}, we also need to use the properties of the
minimum eigenvalue oracle, Procedure~\ref{alg:meo}. To this end, we
make the following generic assumption.

\begin{assumption} \label{assum:wccmeo}
For every iteration $k$ at which Algorithm~\ref{alg:dncg} calls
Procedure~\ref{alg:meo}, and for a specified failure probability
$\delta$ with $0 \le \delta \ll 1$, Procedure~\ref{alg:meo} either
certifies that $\nabla^2 f(x_k) \succeq -\epsH I$ or finds a vector
of curvature smaller than $-{\epsH}/{2}$ in at most
\begin{equation} \label{eq:wccmeo}
		N_{\mathrm{meo}}:=\min\left\{n,
		1+\left\lceil\Cmeo\epsH^{-1/2}\right\rceil\right\}
\end{equation}
Hessian-vector products, \refer{with probability $1-\delta$,} where
$\Cmeo$ depends at most logarithmically on $\delta$ and $\epsH$.
\end{assumption}

%
%
Assumption~\ref{assum:wccmeo} encompasses the
strategies we mentioned in
Section~\ref{subsec:algomeo}. \refer{Assuming the bound $U_H$ on
  $\|H\|$ is available,} for both the Lanczos method with a random
starting vector and the conjugate gradient algorithm with a random
right-hand side, \eqref{eq:wccmeo} holds with
$\Cmeo=\ln(2.75n/\delta^2)\sqrt{U_H}/2$.  \refer{When a bound on
  $\|H\|$ is not available in advance, it can be estimated efficiently
  with minimal effect on the overall complexity of the method, as
  shown in Appendix~\ref{subapp:lacgwithoutM}.}

The next lemma guarantees termination of the backtracking line search
for a negative curvature direction, regardless of whether it is
produced by Algorithm~\ref{alg:ccg} or Procedure~\ref{alg:meo}. As in
Lemma~\ref{lemma:LSccgSOL}, the result is deterministic.
\begin{lemma} \label{lemma:LSgeneralNC}
Suppose that Assumptions~\ref{assum:compactlevelset}
and~\ref{assum:fC22} hold.  Suppose that at iteration $k$ of
Algorithm~\ref{alg:dncg}, the search direction $d_k$ is of negative
curvature type, obtained either directly from Procedure~\ref{alg:meo}
or as the output of Algorithm~\ref{alg:ccg} and d\_type=NC. Then the
backtracking line search terminates with step length $\alpha_k =
\theta^{j_k}$ with $j_k \le \jnc +1$, where $\jnc$ is defined as in
Lemma~\ref{lemma:LSccgNC}, and the decrease in the function value
resulting from the chosen step length satisfies
\begin{equation} \label{eq:LSgeneralNCdecrease}
f(x_k) - f(x_k+\alpha_k\,d_k) \; \geq \; \frac{\cnc}{8} \epsH^3,
\end{equation}	
with $\cnc$ is defined in Lemma~\ref{lemma:LSccgNC}.
\end{lemma}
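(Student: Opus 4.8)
The plan is to mimic the structure of the proof of Lemma~\ref{lemma:LSccgNC}, which itself defers to \cite[Lemma~1]{CWRoyer_SJWright_2018}, but now tracking through the slightly weaker curvature bound that Procedure~\ref{alg:meo} provides. First I would record what we know about $d_k$. If $d_k$ comes from Algorithm~\ref{alg:ccg} with d\_type=NC, then \eqref{eq:ccgstepNC} gives the strong normalization $d_k^\top \nabla^2 f(x_k) d_k = -\|d_k\|^3 \le -\epsH\|d_k\|^2$, exactly as in Lemma~\ref{lemma:LSccgNC}, so that case is already covered and yields the full $\cnc\epsH^3$ decrease. The new case is when $d_k$ is obtained from the vector $v$ returned by Procedure~\ref{alg:meo}: there $\|v\|=1$ and $v^\top\nabla^2 f(x_k)v = \lambda \le -\epsH/2$, and Algorithm~\ref{alg:dncg} sets $d_k = -\mathrm{sgn}(v^\top g)\,|v^\top\nabla^2 f(x_k)v|\,v$, so that $\|d_k\| = |\lambda| \ge \epsH/2$, $d_k^\top\nabla f(x_k)\le 0$, and the Rayleigh quotient satisfies $d_k^\top\nabla^2 f(x_k)d_k/\|d_k\|^2 = \lambda = -\|d_k\| \le -\epsH/2$. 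So the only difference from the Capped-CG case is that the lower bound on $\|d_k\|$ and on the magnitude of the (negative) curvature is $\epsH/2$ instead of $\epsH$.

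Next I would push this through the backtracking line search estimate. Using \eqref{eq:LH} with $d = \alpha_k d_k$ and the facts $d_k^\top\nabla f(x_k)\le 0$ and $d_k^\top\nabla^2 f(x_k)d_k = -\|d_k\|^3$, one gets
\[
f(x_k+\alpha_k d_k) \le f(x_k) - \tfrac12\alpha_k^2\|d_k\|^3 + \tfrac{L_H}{6}\alpha_k^3\|d_k\|^3,
\]
so the Armijo-type condition \eqref{eq:lsdecreasedamped} holds as soon as $\tfrac12 - \tfrac{L_H}{6}\alpha_k \ge \tfrac{\eta}{6}\alpha_k$, i.e.\ whenever $\alpha_k \le 3/(L_H+\eta)$. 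Hence the backtracking terminates with $j_k \le \jnc+1$ where $\jnc$ is as in \eqref{eq:eiglsits}, giving $\alpha_k \ge \min\{1,\,3\theta/(L_H+\eta)\}$. Substituting into \eqref{eq:lsdecreasedamped},
\[
f(x_k) - f(x_k+\alpha_k d_k) \ge \tfrac{\eta}{6}\alpha_k^3\|d_k\|^3 \ge \tfrac{\eta}{6}\min\!\Big\{1,\tfrac{27\theta^3}{(L_H+\eta)^3}\Big\}\,\|d_k\|^3 = \cnc\,\|d_k\|^3.
\]
Now I would invoke $\|d_k\| \ge \epsH/2$, which holds in \emph{both} cases (with room to spare in the Capped-CG case, where $\|d_k\|\ge\epsH$), to conclude $f(x_k)-f(x_k+\alpha_k d_k) \ge \cnc(\epsH/2)^3 = \tfrac{\cnc}{8}\epsH^3$, which is exactly \eqref{eq:LSgeneralNCdecrease}.

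The only genuinely delicate point is bookkeeping the two sources of $d_k$ uniformly: I want a single inequality $d_k^\top\nabla^2 f(x_k)d_k = -\|d_k\|^3$ together with $\|d_k\|\ge\epsH/2$ and $d_k^\top\nabla f(x_k)\le0$ to feed the line-search argument, and I should check that the scaling in Algorithm~\ref{alg:dncg}'s negative-curvature branch for $v$ indeed produces that exact cubic normalization (it does, since $d_k$ is a positive multiple of $\pm v$ and $|v^\top\nabla^2 f(x_k)v| = \|d_k\|/\|v\|^2 = \|d_k\|$). Everything else is the same computation as in \cite[Lemma~1]{CWRoyer_SJWright_2018} and Lemma~\ref{lemma:LSccgNC}, just with the factor-of-$8$ loss coming from the $\epsH/2$ threshold in Procedure~\ref{alg:meo}; no probabilistic reasoning enters, because conditional on Procedure~\ref{alg:meo} returning a direction (rather than the certificate), its output deterministically satisfies the stated curvature bound.
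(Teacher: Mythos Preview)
Your proposal is correct and follows essentially the same approach as the paper: the paper's proof simply observes that the Capped-CG case is already covered by Lemma~\ref{lemma:LSccgNC} (with a factor $8$ to spare), verifies that the Procedure~\ref{alg:meo} scaling gives $d_k^\top \nabla^2 f(x_k) d_k = -\|d_k\|^3$ with $\|d_k\|\ge \tfrac12\epsH$, and then says the result follows by rerunning the proof of Lemma~\ref{lemma:LSccgNC} with $\tfrac12\epsH$ in place of $\epsH$. You carry out that rerun explicitly via the Taylor bound~\eqref{eq:LH}, which is exactly what the deferred argument of \cite[Lemma~1]{CWRoyer_SJWright_2018} does.
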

\begin{proof}
  Lemma~\ref{lemma:LSccgNC} shows that the claim holds (with a factor
  of $8$ to spare) when the direction of negative curvature is
  obtained from Algorithm~\ref{alg:ccg}.  When the direction is
  obtained from Procedure~\ref{alg:meo}, we have \refer{by the scaling
    of $d_k$ applied in Algorithm~\ref{alg:dncg} that}
\begin{equation} \label{eq:boundnegcurvNCproba}
d_k^\top \nabla^2 f(x_k) d_k = -\|d_k\|^3 \le -\frac12 \epsH \|d_k
\|^2 < 0,
\end{equation}
from which it follows that $\|d_k \| \ge \tfrac12 \epsH$. The
result can now be obtained by following the proof of
Lemma~\ref{lemma:LSccgNC}, with $\tfrac12 \epsH$ replacing $\epsH$.
\end{proof}

\refer{We are now ready to state our iteration complexity result for
Algorithm~\ref{alg:dncg}.}
\begin{theorem} \label{theo:wccits}
Suppose that Assumptions~\ref{assum:compactlevelset},
\ref{assum:fC22}, and ~\ref{assum:wccmeo} hold, and define
\begin{equation} \label{eq:wccits}
\bar{K}_2 :=\left\lceil \frac{3(f(x_0)-\flow)}{\min\{\csol,\cnc/8\}}\max\{\epsg^{-3}
\epsH^3, \epsH^{-3}\} \right\rceil + 2,
\end{equation}
 where constants $\csol$ and $\cnc$
are defined in Lemmas~\ref{lemma:LSccgSOL} and~\ref{lemma:LSccgNC},
respectively.  Then with probability at least
\refer{$(1-\delta)^{\bar{K}_2}$}, Algorithm~\ref{alg:dncg} terminates
at a point satisfying \eqref{eq:2epson} in at most $\bar{K}_2$
iterations. (With probability at most
\refer{$1-(1-\delta)^{\bar{K}_2}$}, it terminates incorrectly within
$\bar{K}_2$ iterations at a point for which $\|\nabla f(x_k) \| \le
\epsg$ but $\lambda_{\min}(\nabla^2 f(x)) <
-\epsH$.)
\end{theorem}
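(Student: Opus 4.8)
The plan is to bound the number of iterations by a descent argument, exactly as in Theorem~\ref{theo:wcc1}, but now accounting for the extra iteration(s) at which Procedure~\ref{alg:meo} is invoked and for the failure probability of that procedure. First I would observe that at any iteration $k < \bar{K}_2$ at which the algorithm has not yet terminated, one of three things happens: (i) $\|\nabla f(x_k)\| > \epsg$ and Algorithm~\ref{alg:ccg} returns \texttt{d\_type=SOL}, in which case Lemma~\ref{lemma:LSccgSOL} gives $f(x_k)-f(x_{k+1}) \ge \csol\min\{\|\nabla f(x_{k+1})\|^3\epsH^{-3},\epsH^3\}$; (ii) $\|\nabla f(x_k)\| > \epsg$ and Algorithm~\ref{alg:ccg} returns \texttt{d\_type=NC}, or $\|\nabla f(x_k)\| \le \epsg$ and Procedure~\ref{alg:meo} returns a negative curvature direction, in which case Lemma~\ref{lemma:LSccgNC} or Lemma~\ref{lemma:LSgeneralNC} gives a decrease of at least $(\cnc/8)\epsH^3$; or (iii) $\|\nabla f(x_k)\| \le \epsg$ and Procedure~\ref{alg:meo} certifies $\nabla^2 f(x_k) \succeq -\epsH I$, in which case the algorithm terminates at a point satisfying \eqref{eq:2epson}.

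Next I would handle the one subtlety that already appears in the first-order analysis: in case (i) the guaranteed decrease involves $\|\nabla f(x_{k+1})\|$ rather than $\|\nabla f(x_k)\|$, so if $\|\nabla f(x_{k+1})\| \le \epsg$ the decrease at iteration $k$ may be negligible. But then iteration $k+1$ falls under case (ii) or (iii): either the algorithm terminates correctly, or it takes a negative-curvature step with decrease at least $(\cnc/8)\epsH^3$, or — the bad event — Procedure~\ref{alg:meo} fails and returns a false certificate. Setting this aside, along every run in which Procedure~\ref{alg:meo} never fails, each iteration (except possibly a single ``small-decrease'' SOL iteration immediately preceding a negative-curvature or terminating iteration) decreases $f$ by at least $\min\{\csol,\cnc/8\}\min\{\epsg^3\epsH^{-3},\epsH^3\}$; pairing up the small-decrease iterations with their successors costs at most a factor of $2$ and the ``$+2$'' slack in the definition of $\bar{K}_2$, together with the factor $3$, absorbs this. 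Summing the decreases and using $f(x_k) \ge \flow$ from \eqref{eq:boundscompact} forces termination within $\bar{K}_2$ iterations, giving the contradiction if it has not already terminated correctly.

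Finally I would account for the probability. Procedure~\ref{alg:meo} is called at most once per iteration, hence at most $\bar{K}_2$ times over the run; under Assumption~\ref{assum:wccmeo} each call independently returns a correct answer with probability at least $1-\delta$ (a false certificate is the only failure mode, since a returned direction of curvature below $-\epsH/2$ is always genuine). By a union bound over the at most $\bar{K}_2$ calls — or, stated multiplicatively, since the calls' outcomes can be taken independent given the iterates — all calls are correct with probability at least $(1-\delta)^{\bar{K}_2}$, and on that event the deterministic argument above applies verbatim, so the algorithm terminates within $\bar{K}_2$ iterations at a point satisfying \eqref{eq:2epson}. On the complementary event, of probability at most $1-(1-\delta)^{\bar{K}_2}$, the only way the conclusion can fail is that some call returned a false certificate, i.e.\ the algorithm stopped at an $x_k$ with $\|\nabla f(x_k)\| \le \epsg$ but $\lambdamin(\nabla^2 f(x_k)) < -\epsH$. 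I expect the main obstacle to be purely bookkeeping: carefully matching the ``small-decrease'' SOL iterations to their successors so that the averaged per-iteration decrease is a fixed multiple of $\min\{\epsg^3\epsH^{-3},\epsH^3\}$, and checking that the constants $3$ and $+2$ in \eqref{eq:wccits} are exactly what this pairing requires; the probabilistic part is a one-line union bound once the deterministic counting is in place.
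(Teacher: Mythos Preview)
Your proposal is correct and follows essentially the same approach as the paper. The paper makes the bookkeeping explicit by partitioning $\{0,\dotsc,\bar{K}_2-1\}$ into three index sets $\cK_1$ (small gradient, Procedure~\ref{alg:meo} called), $\cK_2$ (large gradient at both $x_l$ and $x_{l+1}$), and $\cK_3$ (large gradient at $x_l$, small at $x_{l+1}$), observes that $|\cK_3|\le|\cK_1|+1$ --- your ``pairing'' --- and then bounds $\bar{K}_2 \le 2|\cK_1|+|\cK_2|+1 \le 3\max\{|\cK_1|,|\cK_2|\}+1$, which is exactly where the factor $3$ and the ``$+2$'' in \eqref{eq:wccits} come from; the probability argument is identical to yours.
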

\begin{proof}
Algorithm~\ref{alg:dncg} terminates incorrectly with probability
$\delta$ at any iteration at which Procedure~\ref{alg:meo} is called,
\refer{when Procedure~\ref{alg:meo} certifies erroneously that
  $\lambda_{\min}(\nabla^2 f(x)) \ge -\epsH$.}
\refer{Since an erroneous certificate can only lead to termination, an
  erroneous certificate at iteration $k$ means that
  Procedure~\ref{alg:meo} did not produce an erroneous certificate at
  iterations $0$ to $k-1$. By a disjunction argument, we have that the
  overall probability of terminating with an erroneous certificate
  during the first $\bar{K}_2$ iterations is bounded by
  $1-(1-\delta)^{\bar{K}_2}$. Therefore, with probability at least
  $(1-\delta)^{\bar{K}_2} $, no incorrect termination occurs in the
  first $\bar{K}_2$ iterations.} 
  
Suppose now for contradiction that Algorithm~\ref{alg:dncg} runs for
$\bar{K}_2$ iterations without terminating. That is, for all 
$l=0,1,\dotsc,\bar{K}_2$, we have either $\| \nabla f(x_l) \| > \epsg$ 
or $\lambdamin(\nabla^2 f(x_l)) < -\epsH$. We perform the following 
partition of the set of iteration indices:
\begin{equation} \label{eq:K123}
  \cK_1 \cup \cK_2 \cup \cK_3 = \{0,1,\dotsc,\bar{K}_2-1\},
  \end{equation}
where $\cK_1$, $\cK_2$, and $\cK_3$ are defined as follows.
  	
\medskip
	
\textbf{Case 1:} $\cK_1 := \{ l =0,1,\dotsc,\bar{K}_2-1 \, : \,
\|\nabla f(x_l) \| \le \epsg \}$.  At each iteration $l \in \cK_1$,
Algorithm~\ref{alg:dncg} calls Procedure~\ref{alg:meo}, which does not
certify that $\lambdamin (\nabla^2 f(x_l)) \ge -\epsH$ (since the
algorithm continues to iterate) but rather returns a direction of
sufficient negative curvature. By Lemma~\ref{lemma:LSgeneralNC}, the
step along this direction leads to an improvement in $f$ that is
bounded as follows:
\begin{equation} \label{eq:wccitsK1decrease}
f(x_l)-f(x_{l+1})  \ge  \frac{\cnc}{8} \epsH^3.
\end{equation}		
	
\medskip	
	
\textbf{Case 2:} $\cK_2 := \{ l=0,1,\dotsc,\bar{K}_2-1 \, : \, \|
\nabla f(x_l) \| > \epsg \; \mbox{and} \; \| \nabla f(x_{l+1}) \| >
\epsg \}$.  Algorithm~\ref{alg:dncg} calls Algorithm~\ref{alg:ccg} at
each iteration $l \in \cK_2$, returning either an approximate damped
Newton or a negative curvature direction. By combining
Lemmas~\ref{lemma:LSccgSOL} and \ref{lemma:LSccgNC}, we obtain a 
decrease in $f$ satisfying
\begin{align}
  \nonumber
f(x_l) - f(x_{l+1})  & \ge  \min\{\csol,\cnc\}
\min\left\{\| \nabla f(x_{l+1}) \|^3\epsH^{-3},\epsH^3\right\} \\
  \label{eq:wccitsK2decrease}
& \ge \min\{\csol,\cnc/8\}
\min\left\{\epsg^3\epsH^{-3},\epsH^3\right\}.
\end{align}

\medskip
	
\textbf{Case 3:} $\cK_3 := \{ l=0,1,\dotsc,\bar{K}_2-1 \, : \, \|
\nabla f(x_l) \| > \epsg \ge \| \nabla f(x_{l+1}) \| \}$.  Because $\|
\nabla f(x_{l+1})\|$ may be small in this case, we can no longer bound
the decrease in $f$ by an expression such as
\eqref{eq:wccitsK2decrease}. We can however guarantee at least that
$f(x_l)-f(x_{l+1}) \ge 0$. Moreover, provided that $l<\bar{K}_2-1$, we
have from $\| \nabla f(x_{l+1}) \| \le \epsg$ that the next iterate
$l+1$ is in $\cK_1$. Thus, a significant decrease in $f$ will be
attained at the {\em next} iteration, and we have
\begin{equation} \label{eq:K13}
  | \cK_3| \le |\cK_1|+1.
\end{equation}

\medskip

We now consider the total decrease in $f$ over the span of $\bar{K}_2$
iterations, which is bounded by $f(x_0)-\flow$ as follows:
\begin{align} 
  f(x_0) - \flow &\ge \sum_{l=0}^{\bar{K}_2-1} (f(x_l)-f(x_{l+1})) \nonumber \\
  \label{eq:sumdecompwccits}
& \ge \sum_{l \in \cK_1} (f(x_l)-f(x_{l+1})) +  \sum_{l \in \cK_2} (f(x_l)-f(x_{l+1})) 
\end{align}
where both sums in the final expression are nonnegative. Using first
the bound \eqref{eq:wccitsK1decrease} for the sum over $\cK_1$, we
obtain
\begin{equation} \label{eq:wccitsboundK1}
f(x_0) - \flow \; \ge \; |\cK_1| \frac{\cnc}{8} \epsH^3 \; \Leftrightarrow \; 
|\cK_1| \le \frac{f(x_0) - \flow}{\cnc/8} \epsH^{-3}.
\end{equation}
Applying~\eqref{eq:wccitsK2decrease} to the sum over $\cK_2$ leads to
\begin{equation} \label{eq:wccitsboundK2}
|\cK_2| \le \frac{f(x_0) -
  \flow}{\min\{\csol,\cnc/8\}}\max\{\epsg^{-3} \epsH^3,\epsH^{-3}\}.
\end{equation}
Using these bounds together with \eqref{eq:K13}, we have
\begin{align*}
  \bar{K}_2  & = |\cK_1| + |\cK_2| + |\cK_3| \\
  & \le  2|\cK_1|
  + |\cK_2| + 1 \\
  & \le 3\max\{|\cK_1|,|\cK_2|\} +1  \\
  & \le
\frac{3(f(x_0) - \flow)}
     {\min\{\csol,\cnc/8\}}\max\{\epsg^{-3}\epsH^3,\epsH^{-3}\} +1 \\
     & \le \bar{K}_2-1,
                \end{align*}
                giving the required contradiction.
\end{proof}

\refer{We note that when $\delta < 1/\bar{K}_2$ in
  Theorem~\ref{theo:wccits}, a technical result shows that
  $(1-\delta)^{\bar{K}_2} \ge 1-\delta \bar{K}_2$. In this case, the
  qualifier ``with probability at least $(1-\delta)^{\bar{K}_2}$ in
  the theorem can be replaced by ``with probability at least $1-\delta
  \bar{K}_2$'' while remaining informative.}

\refer{Finally, we provide an operation complexity result: a bound on the
number of Hessian-vector products and gradient evaluations necessary
for Algorithm~\ref{alg:dncg} to find a point that
satisfies~\eqref{eq:2epson}.}
\begin{corollary} \label{coro:wcc2Hv}
  Suppose that assumptions of Theorem~\ref{theo:wccits} hold, and let
  $\bar{K}_2$ be defined as in \eqref{eq:wccits}.  \refer{Suppose that
    the values of $M$ used or calculated at each instance of
    Algorithm~\ref{alg:ccg} satisfy $M \le U_H$.} Then with
  probability at least $(1-\delta)^{\bar{K}_2}$,
  Algorithm~\ref{alg:dncg} terminates
  at a point satisfying \eqref{eq:2epson} after at most
  \[
  \left(\max \{ 2\min\{n,J(U_H,\epsH,\zeta)\}+2,N_{\mathrm{meo}} \} \right) 
  \bar{K}_2
  \]
  Hessian-vector products and/or gradient evaluations. (With
  probability at most $1-(1-\delta)^{\bar{K}_2}$, it terminates incorrectly
  with this complexity at a point for which $\|\nabla f(x_k) \| \le
  \epsg$ but $\lambda_{\min}(\nabla^2 f(x)) < -\epsH$.)
  
  \refer{For $n$ sufficiently large, and assuming that 
  $\delta < 1/\bar{K}_2$, 
    the bound is\\
    $\tcO\left( \max\left\{\epsg^{-3} \epsH^{5/2},\epsH^{-7/2}
    \right\}\right)$, with probability at most $1-\bar{K}_2 \delta$.} 
\end{corollary}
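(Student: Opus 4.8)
The plan is to mirror the proof of Corollary~\ref{coro:wcc1Hv}: bound the computational work performed in a single iteration of Algorithm~\ref{alg:dncg}, then multiply by the iteration bound $\bar{K}_2$ of Theorem~\ref{theo:wccits}, carrying the probabilistic qualifier along unchanged.

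First I would bound the per-iteration work. At iteration $k$, exactly one of two subroutines is invoked. If $\|\nabla f(x_k)\| > \epsg$, Algorithm~\ref{alg:ccg} is called, and Lemma~\ref{lemma:ccgits} bounds its cost by at most $2\min\{n,J(M,\epsH,\zeta)\}+1$ Hessian-vector products in the worst case (when the iterates $y_j$ must be regenerated to form the direction of \eqref{eq:weakcurvdir}); together with the single gradient evaluation used to form $\nabla f(x_k)$ this is $2\min\{n,J(M,\epsH,\zeta)\}+2$ units of computation, where $M$ is the value used or returned by that instance of Algorithm~\ref{alg:ccg}. Since by hypothesis $M \le U_H$ and $J(\cdot,\epsH,\zeta)$ is nondecreasing in its first argument (as observed after Lemma~\ref{lemma:ccgits}), this is at most $2\min\{n,J(U_H,\epsH,\zeta)\}+2$. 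If instead $\|\nabla f(x_k)\| \le \epsg$, Procedure~\ref{alg:meo} is called, whose cost is bounded by $N_{\mathrm{meo}}$ Hessian-vector products by Assumption~\ref{assum:wccmeo}. Taking the maximum over these two mutually exclusive cases bounds the work at iteration $k$ by $\max\{2\min\{n,J(U_H,\epsH,\zeta)\}+2,\,N_{\mathrm{meo}}\}$. Invoking Theorem~\ref{theo:wccits}, with probability at least $(1-\delta)^{\bar{K}_2}$ the algorithm terminates correctly within $\bar{K}_2$ iterations (and otherwise terminates incorrectly within $\bar{K}_2$ iterations), so multiplying the per-iteration bound by $\bar{K}_2$ gives the stated total together with the stated probabilities.

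For the asymptotic form I would substitute the orders of magnitude established earlier. For $n$ large enough, \eqref{eq:HvcountcappedCG} gives $J(U_H,\epsH,\zeta)=\tcO(\epsH^{-1/2})$ and \eqref{eq:wccmeo} gives $N_{\mathrm{meo}}=\tcO(\epsH^{-1/2})$ (using that $\Cmeo$ is at most logarithmic in $\epsH$ and $\delta$), so the per-iteration work is $\tcO(\epsH^{-1/2})$; combined with $\bar{K}_2=\mathcal{O}(\max\{\epsg^{-3}\epsH^{3},\epsH^{-3}\})$ from \eqref{eq:wccits}, the product is $\tcO(\max\{\epsg^{-3}\epsH^{5/2},\epsH^{-7/2}\})$. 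When $\delta<1/\bar{K}_2$, the elementary inequality $(1-\delta)^{\bar{K}_2}\ge 1-\bar{K}_2\delta$ (the technical fact quoted after Theorem~\ref{theo:wccits}) lets the qualifier be rephrased in terms of $1-\bar{K}_2\delta$. I do not anticipate any substantive obstacle: the only real care is the bookkeeping in the per-iteration bound — identifying which subroutine runs, worst-casing the Hessian-vector count of Algorithm~\ref{alg:ccg}, and passing from the adaptive value of $M$ to $U_H$ via monotonicity of $J$.
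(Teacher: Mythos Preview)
Your proposal is correct and follows essentially the same approach as the paper, which simply states that the result follows by combining Theorem~\ref{theo:wccits} with Lemma~\ref{lemma:ccgits} and Assumption~\ref{assum:wccmeo}. Your expansion of the per-iteration bookkeeping (the two branches, the monotonicity of $J$ in $M$, and the asymptotic substitution) is exactly the intended argument spelled out in detail.
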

\begin{proof}
The proof follows by combining Theorem~\ref{theo:wccits} (which bounds
the number of iterations) with Lemma~\ref{lemma:ccgits} and
Assumption~\ref{assum:wccmeo} (which bound the workload per
iteration).
  \end{proof}

By setting $\epsH=\epsg^{1/2}$ and assuming that $n$ is sufficiently
large, we recover (with high probability) the familiar complexity
bound of order $\tcO (\epsg^{-7/4})$, matching the bound of
accelerated gradient-type methods such
as~\cite{NAgarwal_ZAllenZhu_BBullins_EHazan_TMa_2017,YCarmon_JCDuchi_OHinder_ASidford_2018a,CJin_PNetrapalli_MIJordan_2018}.

\section{Discussion} \label{sec:discussion}

We have presented a Newton-CG approach for smooth nonconvex
unconstrained minimization that is close to traditional variants of
this method, but incorporates additional checks and safeguards that
enable convergence to a point satisfying approximate second-order
conditions \eqref{eq:2epson} with guaranteed complexity.  This was
achieved by exploiting the properties of \refer{Lanczos-based methods}
in two ways. First, we used CG to compute Newton-type steps when
possible, while monitoring convexity during the CG iterations to
detect negative curvature directions when those exist. Second, by
exploiting the close relationship between the Lanczos and CG
algorithms, we show that both methods can be used to detect negative
curvature of a given symmetric matrix with high probability. Both
techniques are endowed with complexity guarantees, and can be combined
within a Newton-CG framework to match the best known bounds for
second-order algorithms on nonconvex
optimization~\cite{CCartis_NIMGould_PhLToint_2017d}.

Nonconvexity detection can be introduced into CG in ways other than
those used in Algorithm~\ref{alg:ccg}. For instance, we can drop the
\emph{implicit} cap on the number of CG iterations that is due to
monitoring of the condition $\| r_j \| > \sqrt{T} \tau^{j/2} \|r_0 \|$
and use of the negative curvature direction generation procedure
\eqref{eq:weakcurvdir} from Algorithm~\ref{alg:ccg}, and instead
impose an \emph{explicit} cap (smaller by a factor of approximately
$4$ than $J(M,\eps,\zeta)$) on the number of CG iterations. In this
version, if the explicit cap is reached without detection of a
direction of sufficient negative curvature for $\bH$, then
Procedure~\ref{alg:meo} is invoked to find one. This strategy comes
equipped with essentially the same high-probability complexity results
as Theorem~\ref{theo:wccits} and Corollary~\ref{coro:wcc2Hv}, but it
lacks the deterministic approximate-first-order complexity guarantee
of Theorem~\ref{theo:wcc1}. On the other hand, it is more elementary,
both in the specification of the Capped CG procedure and the analysis.

A common feature to the Capped CG procedures described in
Algorithm~\ref{alg:ccg} and in the above paragraph, which also emerges
in most Newton-type methods with good complexity
guarantees~\cite{CCartis_NIMGould_PhLToint_2017d}, is the need for
high accuracy in the step computation. That is, only a small residual
is allowed in the damped Newton system at the approximate solution.
Looser restrictions are typically used in practical algorithms, but
our tighter bounds appear to be necessary for the complexity
analysis. Further investigation of the differences between our
procedure in this paper and practical Newton-CG procedures is a
subject of ongoing research.

\section*{Acknowledgments}

We thank sincerely the associate editor and two referees, whose
comments led us to improve the presentation and to derive stronger
results.

\bibliographystyle{siam} 
\bibliography{refs-newtonlanczoscg}


\appendix

\section{Linear Conjugate Gradient: Relevant Properties} \label{app:cgresults}

In this appendix, we provide useful results for the classical CG
algorithm, that
also apply to the ``standard CG'' operations within
\refer{Algorithm~\ref{alg:ccg}}. To this end, and for the sake of
discussion in Appendix~\ref{app:lacg}, we sketch the standard CG
method in Algorithm~\ref{alg:cg}, reusing the notation of
Algorithm~\ref{alg:ccg}.

\begin{algorithm}[ht!]
\caption{Conjugate Gradient}
\label{alg:cg}
\begin{algorithmic}
  \STATE \emph{Inputs:} Symmetric matrix $\bH$, vector $g$;
  \STATE $r_0 \leftarrow g$, $p_0 \leftarrow -r_0$, $y_0 \leftarrow 0$, $j \leftarrow 0$;
  \WHILE{$p_j^\top\bH p_j>0$ and $r_j \ne 0$}
  \STATE $\alpha_j \leftarrow {r_j^\top r_j}/{p_j^\top \bH p_j}$;
\STATE $y_{j+1} \leftarrow y_j+\alpha_j p_j$;
\STATE $r_{j+1} \leftarrow r_j + \alpha_j \bH p_j$;
\STATE $\beta_{j+1} \leftarrow {(r_{j+1}^\top r_{j+1})}/{(r_j^\top r_j)}$;
\STATE $p_{j+1} \leftarrow -r_{j+1} + \beta_{j+1}p_j$;
\STATE $j \leftarrow j+1$;
\ENDWHILE
\end{algorithmic}
\end{algorithm}

Here and below, we refer often to the following quadratic function:
\begin{equation} \label{eq:quadraticCG}
	q(y) := \frac{1}{2}y^\top \bH y + g^\top y,
\end{equation}
where $\bH$ and $g$ are the matrix and vector parameters of
Algorithms~\ref{alg:ccg} or~\ref{alg:cg}. When $\bH$ is positive
definite, the minimizer of $q$ is identical to the unique solution of
$\bH y = -g$.  CG can be viewed either as an algorithm to solve $\bH y
= -g$ or as an algorithm to minimize $q$.

The next lemma details several properties of the conjugate gradient
method to be used in the upcoming proofs.
\begin{lemma} \label{lemma:propertiesCGoneiter}
Suppose that $j$ iterations of the CG loop are performed in
Algorithm~\ref{alg:ccg} or~\ref{alg:cg}. Then, we have
\begin{equation} \label{eq:suffcurvuptoj}
	  \frac{p_i^\top \bH p_i}{\|p_i\|^2} > 0 \quad \mbox{for
       all $i=0,1,\dotsc,j-1$}.
\end{equation}	
Moreover, the following properties hold.
\begin{enumerate}
\item $y_i \in \lspan\left\{p_0,\dotsc,p_{i-1}\right\}$,
  $i=1,2,\dotsc,j$.
\item $r_i \in \lspan\left\{p_0,\dotsc,p_i\right\}$ for all $i=1,2,\dotsc,j$, and 
\[
r_i^\top v = 0, \quad \mbox{for all $v \in
  \lspan\left\{p_0,\dotsc,p_{i-1}\right\}$ and all $i=1,2,\dotsc,j$}.
\]
(In particular, $r_i^\top r_l = 0$ if \refer{$0 \le l<i \le j$. If} $j=n$, 
then $r_n=0$.)
\item $\|r_i\| \le \|p_i\|$, $i=0,1,\dotsc,j$.
\item $r_i^\top p_i = -\|r_i\|^2$, $i=0,1,\dotsc,j$.
\item $p_i^\top \bH p_k=0$ for all $i,k =0,1,\dotsc,j$ with $k \neq i$.
\item $p_i = -\sum_{k=0}^i ({\|r_i\|^2}/{\|r_k\|^2}) r_k$, $i=0,1,\dotsc,j$.
\item
  \[
  q(y_{i+1}) = q(y_i) - \frac{\|r_i\|^4}{2 p_i^\top \bH p_i}, \quad i=0,1,\dotsc,j-1.
  \]
\item $r_i^\top \bH r_i \ge p_i^\top \bH p_i$, $i=0,1,\dotsc,j$.
\end{enumerate}		
\end{lemma}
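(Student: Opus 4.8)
My plan is to prove these properties in the order listed, since several of them build on one another; the first seven are the classical facts about linear CG, and property~8 is the only one that requires a genuinely new (though short) argument. I would begin by establishing \eqref{eq:suffcurvuptoj}: the CG loop only executes iteration $i$ when $p_i^\top\bH p_i > 0$ (this is the loop guard in Algorithm~\ref{alg:cg}, and in Algorithm~\ref{alg:ccg} the termination tests would have fired otherwise), so for all $i=0,\dots,j-1$ the quantity $\alpha_i$ is well-defined and $p_i^\top\bH p_i/\|p_i\|^2 > 0$. Properties~1 and~2 then follow by a joint induction on $i$ using the update formulas $y_{i+1}=y_i+\alpha_i p_i$, $r_{i+1}=r_i+\alpha_i\bH p_i$, $p_{i+1}=-r_{i+1}+\beta_{i+1}p_i$: the span statement for $y$ is immediate, the orthogonality $r_i^\top v=0$ for $v\in\lspan\{p_0,\dots,p_{i-1}\}$ is the defining Galerkin property of CG and follows from the choice of $\alpha_{i-1}$ together with the conjugacy relation (property~5), and the containment $r_i\in\lspan\{p_0,\dots,p_i\}$ comes from inverting the $p$-recurrence. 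The consequence $r_i^\top r_l=0$ for $l<i$ is then immediate since $r_l\in\lspan\{p_0,\dots,p_l\}\subseteq\lspan\{p_0,\dots,p_{i-1}\}$, and $r_n=0$ follows because $r_n$ is orthogonal to the $n$-dimensional space $\lspan\{p_0,\dots,p_{n-1}\}$ (the $p_i$ being nonzero and conjugate, hence linearly independent).

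Next, property~4, $r_i^\top p_i=-\|r_i\|^2$, follows by taking the inner product of $p_i=-r_i+\beta_i p_{i-1}$ with $r_i$ and using $r_i^\top p_{i-1}=0$ from property~2 (with the base case $p_0=-r_0$). Property~3, $\|r_i\|\le\|p_i\|$, then comes from $\|p_i\|^2 = \|r_i\|^2 + \beta_i^2\|p_{i-1}\|^2 - 2\beta_i r_i^\top p_{i-1} = \|r_i\|^2+\beta_i^2\|p_{i-1}\|^2 \ge \|r_i\|^2$, again using $r_i^\top p_{i-1}=0$. Property~5 (conjugacy of the $p_i$) is the standard induction: assuming $p_i^\top\bH p_k=0$ for $k<i$, one checks $p_{i+1}^\top\bH p_k = -r_{i+1}^\top\bH p_k + \beta_{i+1}p_i^\top\bH p_k$, handles $k=i$ by the choice of $\beta_{i+1}$, and handles $k<i$ by writing $\bH p_k=(r_{k+1}-r_k)/\alpha_k\in\lspan\{p_0,\dots,p_{k+1}\}\subseteq\lspan\{p_0,\dots,p_{i-1}\}$ and invoking $r_{i+1}^\top(\cdot)=0$ from property~2. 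Property~6 is obtained by unrolling $p_i=-r_i+\beta_i p_{i-1}$ and substituting $\beta_i=\|r_i\|^2/\|r_{i-1}\|^2$, which telescopes the coefficients to $\|r_i\|^2/\|r_k\|^2$. Property~7 is the exact line-search identity: $q(y_{i+1}) = q(y_i+\alpha_i p_i) = q(y_i) + \alpha_i p_i^\top(\bH y_i+g) + \tfrac12\alpha_i^2 p_i^\top\bH p_i$; since $\bH y_i+g=r_i$ and $r_i^\top p_i=-\|r_i\|^2$ (property~4) and $\alpha_i=\|r_i\|^2/p_i^\top\bH p_i$, this collapses to $q(y_i)-\|r_i\|^4/(2p_i^\top\bH p_i)$.

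The one step I expect to need a little care is property~8, $r_i^\top\bH r_i\ge p_i^\top\bH p_i$. Here I would use property~6 to write $r_i = -\sum_{k=0}^{i}(\|r_i\|^2/\|r_k\|^2)^{-1}\cdot(\dots)$ — more precisely, from property~6, $p_i$ is a specific linear combination of $r_0,\dots,r_i$, and conversely $r_i = p_i + r_i - p_i$ where $r_i-p_i = \beta_i p_{i-1}$ lies in $\lspan\{p_0,\dots,p_{i-1}\}$. Thus $r_i = p_i - \beta_i p_{i-1}$, and expanding $r_i^\top\bH r_i = p_i^\top\bH p_i - 2\beta_i p_i^\top\bH p_{i-1} + \beta_i^2 p_{i-1}^\top\bH p_{i-1}$; the cross term vanishes by conjugacy (property~5), leaving $r_i^\top\bH r_i = p_i^\top\bH p_i + \beta_i^2 p_{i-1}^\top\bH p_{i-1} \ge p_i^\top\bH p_i$, where the inequality uses $p_{i-1}^\top\bH p_{i-1}>0$ from \eqref{eq:suffcurvuptoj} (valid since iteration $i-1<j$ was completed). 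The base case $i=0$ is trivial since $r_0=p_0$ up to sign — actually $p_0=-r_0$, so $r_0^\top\bH r_0 = p_0^\top\bH p_0$ with equality. The subtlety to watch is the indexing: property~8 is claimed for $i=0,\dots,j$, including $i=j$, and for $i=j$ the term $p_{j-1}^\top\bH p_{j-1}$ is still positive because iteration $j-1$ was one of the completed CG iterations, so \eqref{eq:suffcurvuptoj} applies; no positivity of $p_j^\top\bH p_j$ is needed for the inequality itself.
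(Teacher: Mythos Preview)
Your proposal is correct and follows essentially the same approach as the paper: properties 1--7 are attributed to standard CG theory (the paper simply cites \cite[Chapter~5]{JNocedal_SJWright_2006}, while you spell out the inductions), and for property~8 both you and the paper write $r_i$ as a combination of $p_i$ and $p_{i-1}$, expand the quadratic form, kill the cross term by conjugacy, and use $p_{i-1}^\top\bH p_{i-1}>0$ from \eqref{eq:suffcurvuptoj}. One minor slip: from $p_i=-r_i+\beta_i p_{i-1}$ the correct rearrangement is $r_i=-p_i+\beta_i p_{i-1}$, not $r_i=p_i-\beta_i p_{i-1}$ (and $r_i-p_i\ne\beta_i p_{i-1}$); fortunately the quadratic form $r_i^\top\bH r_i$ is insensitive to this overall sign, so your expansion and conclusion are still correct.
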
	
\begin{proof}
Since CG has not terminated prior to iteration $j$,
\eqref{eq:suffcurvuptoj} clearly holds.  All properties then follow
from the definition of the CG process, and most are proved in standard
texts (see, for example,
\cite[Chapter~5]{JNocedal_SJWright_2006}). Property 8 is less commonly
used, so we provide a proof here.

The case $i=0$ is immediate since $r_0=-p_0$ and there is
equality. When $i \ge 1$, we have:
\begin{equation*}
	p_i = -r_i + \frac{\|r_i\|^2}{\|r_{i-1}\|^2}p_{i-1} \; \Leftrightarrow \; 
	r_i = -p_i + \frac{\|r_i\|^2}{\|r_{i-1}\|^2}p_{i-1}.
\end{equation*}
(Note that if iteration $i$ is reached, we cannot have
$\|r_{i-1}\|=0$.)  It follows that
\begin{eqnarray*}
	r_i^\top \bH r_i &=   
	&p_i^\top \bH p_i -2\frac{\|r_i\|^2}{\|r_{i-1}\|^2} p_i^\top \bH p_{i-1} + 
	\frac{\|r_i\|^4}{\|r_{i-1}\|^4}p_{i-1}^\top \bH p_{i-1} \\
	&= &p_i^\top \bH p_i + \frac{\|r_i\|^4}{\|r_{i-1}\|^4}p_{i-1}^\top \bH p_{i-1},
\end{eqnarray*}
as $p_i^\top \bH p_{i-1} = 0$ by Property 5 above. Since iteration $i$
has been reached, $p_{i-1}$ is a direction of \refer{positive
  curvature},
and we obtain $r_i^\top \bH r_i \ge p_i^\top \bH p_i$, as required.
\end{proof}

We next address an important technical point about
Algorithm~\ref{alg:ccg}: the test \eqref{eq:weakcurvdir} to identify a
direction of negative curvature for $H$ after an insufficiently rapid
rate of reduction in the residual norm $\|r_j\|$ has been observed. As
written, the formula \eqref{eq:weakcurvdir} suggests both that
previous iterations $y_i$, $i=1,2,\dotsc,j-1$ must be stored (or
regenerated) and that additional matrix-vector multiplications
(specifically, $\bH (y_{j+1}-y_i)$, $i=0,1,\dotsc$) must be
performed. We show here that in fact \eqref{eq:weakcurvdir} can be
evaluated at essentially no cost, provided we store two extra scalars
at each iteration of CG: the quantities $\alpha_k$ and
$\|r_k\|^2$, for $k=0,1,\dotsc,j$.
\begin{lemma} \label{lemma:cgcurvaturescalars}
Suppose that Algorithm~\ref{alg:ccg} computes iterates up to iteration
$j+1$. Then, for any $i \in \{0,\dotsc,j\}$, we can compute
(\ref{eq:weakcurvdir}) as
\[
\frac{(y_{j+1}-y_i)^\top \bH (y_{j+1}-y_i)}{\|y_{j+1}-y_i\|^2} =
\frac{\sum_{k=i}^{j} \alpha_k \|r_k\|^2}
{\sum_{\ell=0}^j \left[\sum_{k=\max\{\ell,i\}}^j \alpha_k \|r_k\|^2 \right]^2/\|r_{\ell}\|^2}.
\]
\end{lemma}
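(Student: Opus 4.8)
The plan is to expand $y_{j+1}-y_i$ in terms of the CG search directions $p_k$, handle the numerator via conjugacy, and handle the denominator by passing to the (orthogonal) residual basis. First I would record the telescoping identity that follows directly from the CG update $y_{k+1}=y_k+\alpha_k p_k$:
\[
y_{j+1}-y_i \;=\; \sum_{k=i}^{j}\alpha_k p_k .
\]

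For the numerator, substitute this expression and use conjugacy of the search directions (Property~5 of Lemma~\ref{lemma:propertiesCGoneiter}), which kills every cross term $p_k^\top\bH p_\ell$ with $k\neq\ell$, leaving $\sum_{k=i}^{j}\alpha_k^2\,p_k^\top\bH p_k$. Then the defining relation $\alpha_k=\|r_k\|^2/(p_k^\top\bH p_k)$ collapses each summand to $\alpha_k\|r_k\|^2$, giving exactly the claimed numerator $\sum_{k=i}^{j}\alpha_k\|r_k\|^2$. This step is essentially immediate.

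For the denominator, the $p_k$ are not orthogonal, so I would first rewrite each of them in the residual basis using Property~6, $p_k=-\sum_{\ell=0}^{k}(\|r_k\|^2/\|r_\ell\|^2)r_\ell$, substitute into $\sum_{k=i}^{j}\alpha_k p_k$, and interchange the order of the summations over $k$ and $\ell$. For a fixed $\ell$, the inner index $k$ must satisfy both $k\ge i$ (from the outer sum) and $k\ge\ell$ (from the support of Property~6), hence $k$ runs from $\max\{\ell,i\}$ to $j$, which produces
\[
y_{j+1}-y_i \;=\; -\sum_{\ell=0}^{j}\frac{r_\ell}{\|r_\ell\|^2}\left(\sum_{k=\max\{\ell,i\}}^{j}\alpha_k\|r_k\|^2\right).
\]
Taking the squared Euclidean norm and invoking orthogonality of the residuals $r_0,\dots,r_j$ (Property~2) removes all cross terms, and the diagonal term indexed by $\ell$ contributes $\big(\sum_{k=\max\{\ell,i\}}^{j}\alpha_k\|r_k\|^2\big)^2\|r_\ell\|^2/\|r_\ell\|^4$, i.e.\ the claimed denominator. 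Dividing the two expressions gives the stated formula.

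The only genuinely delicate point is the index bookkeeping in the double-sum interchange and the resulting $\max\{\ell,i\}$ lower limit; the rest is a direct appeal to the standard CG identities. I would also remark in passing that, by the time Algorithm~\ref{alg:ccg} has reached iteration $j+1$, the quantities $\alpha_k$ and $\|r_k\|^2$ are available for all $k\le j$, and $\|r_\ell\|\neq 0$ for $\ell\le j$ (otherwise CG would have already terminated), so every division appearing above is well defined.
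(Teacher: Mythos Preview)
Your proposal is correct and follows essentially the same route as the paper: expand $y_{j+1}-y_i=\sum_{k=i}^{j}\alpha_k p_k$, use conjugacy and the definition of $\alpha_k$ for the numerator, and for the denominator rewrite each $p_k$ in the residual basis via Property~6, swap the sums to get the $\max\{\ell,i\}$ lower limit, and finish with orthogonality of the residuals. Your additional remark about $\|r_\ell\|\neq 0$ ensuring the divisions are well defined is a nice touch that the paper leaves implicit.
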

\begin{proof}
By definition, $y_{j+1}-y_i = \sum_{k=i}^{j} \alpha_k p_k$. By
conjugacy of the $p_k$ vectors, we have
\begin{equation} \label{eq:numeratorcomp}
(y_{j+1}-y_i)^\top \bH (y_{j+1}-y_i)
= \sum_{k=i}^{j} \alpha_k^2 p_k^\top \bH p_k
= \sum_{k=i}^{j} \alpha_k \|r_k\|^2,
\end{equation}
where we used the definition of $\alpha_k$ to obtain the last
equality.  Now we turn our attention to the denominator. Using
Property 6 of Lemma~\ref{lemma:propertiesCGoneiter}, we have that
\[
y_{j+1}-y_i = \sum_{k=i}^{j} \alpha_k p_k =
\sum_{k=i}^{j} \alpha_k \left(-\sum_{\ell=0}^k \frac{\|r_k\|^2}{\|r_\ell\|^2} r_\ell \right),
\]
By rearranging the terms in the sum, we obtain
\begin{equation*}
y_{j+1}-y_i = -\sum_{k=i}^{j} \sum_{\ell=0}^k \alpha_k\|r_k\|^2 \frac{r_\ell}{\|r_{\ell}\|^2} 
= -\sum_{\ell=0}^j \left[\sum_{k=\max\{\ell,i\}}^j \alpha_k \|r_k\|^2 \right] \frac{r_\ell}{\|r_\ell\|^2}.
\end{equation*}
Using the fact that the residuals $\{ r_{\ell} \}_{\ell=0,1,\dotsc,j}$
form an orthogonal set (by Property 2 of
Lemma~\ref{lemma:propertiesCGoneiter}), we have that
\begin{equation*}
  \|y_{j+1}-y_i\|^2 = \sum_{\ell=0}^j \frac{1}{\|r_{\ell} \|^2}
  \left[\sum_{k=\max\{\ell,i\}}^j \alpha_k \|r_k\|^2 \right]^2.
\end{equation*}
Combining this with~\eqref{eq:numeratorcomp} gives the desired result.
\end{proof}

\section{Implementing Procedure~\ref{alg:meo} via Lanczos and Conjugate Gradient} \label{app:lacg}

In the first part of this appendix (Appendix~\ref{subapp:RL}) we outline
the randomized Lanczos approach and describe some salient convergence
properties. The second part (Appendix~\ref{subapp:lacgwithM}) analyzes
the CG method (Algorithm~\ref{alg:cg}) applied to a (possibly
nonconvex) quadratic function with a random linear term. We show that
the number of iterations required by CG to detect nonpositive
curvature in an indefinite matrix is the same as the number required
by Lanczos, when the two approaches are initialized in a consistent
way, thereby proving Theorem~\ref{theo:randcg}.  \refer{As a result,
  both techniques are implementations of Procedure~\ref{alg:meo} that
  satisfy Assumption~\ref{assum:wccmeo}, provided than an upper bound
  $M$ on $\|H\|$ is known. In the third part
  (Appendix~\ref{subapp:lacgwithoutM}), we deal with the case in which
  a bound on $\|H\|$ is not known a priori, and describe a version of
  the randomized Lanczos scheme which obtains an overestimate of this
  quantity (to high probability) during its first phase of
  execution. The complexity of this version differs by only a modest
  multiple from the complexity of the original method, and still
  satisfies Assumption~\ref{assum:wccmeo}.}


\subsection{\refer{Randomized Lanczos}}
\label{subapp:RL}

Consider first the Lanczos algorithm applied to a symmetric,
$n$-by-$n$ matrix $\bH$ and a
starting vector $b \in \R^n$ with $\| b\|=1$.  After $t+1$ iterations,
Lanczos \refer{constructs a basis of the $t$-th Krylov subspace
  defined by
\begin{equation} \label{eq:Kk}
\cK_t(b, \bH) = \lspan \{ b, \bH b, \dotsc, \bH^t b \}.
\end{equation}
The Lanczos method can compute estimates of the minimum and maximum
eigenvalues of $\bH$. For $t=0,1,\dotsc$, those values are given by
\begin{subequations}  \label{eq:eigvalsKk}
\begin{align}     \label{eq:eigvalsKk.min}
\ximin(\bH,b,t) &= \min_z \, z^\top \bH z  \quad \mbox{subject to $\|z\|_2=1$,
   $z \in \cK_t(b,\bH)$,} \\
\ximax(\bH,b,t) &= \max_z  \, z^\top \bH z  \quad \mbox{subject to $\|z\|_2=1$,
   $z \in \cK_t(b,\bH)$.} 
\end{align}
\end{subequations}
The Krylov subspaces satisfy a shift invariance property, that is, for
any $\hat{H}=a_1 I + a_2 H$ with $(a_1,a_2) \in \R^2$, we have that
\begin{equation} \label{eq:shiftinvariance}
	\cK_t(b, \hat{H}) = \cK_t(b,H) \quad \mbox{for $t=0,1,\dotsc$}
\end{equation}
}

\refer{
Properties of the randomized Lanczos procedure are explored in
\cite{JKuczynski_HWozniakowski_1992}. The following key result is a
direct consequence of Theorem~4.2(a) from the cited paper, along with
the shift invariance property mentioned above.
\begin{lemma} \label{lem:KW4.2}
  Let $\bH$ be an $n \times n$ symmetric matrix, let $b$ be chosen
  from a uniform distribution over the sphere $\|b\|=1$, and suppose
  that $\bar{\eps} \in [0,1)$ and $\delta \in (0,1)$ are
    given. Suppose that $\ximin (\bH,b,k)$ and $\ximax(\bH,b,k)$ are
    defined as in \eqref{eq:eigvalsKk}. Then after $k$ iterations of
    randomized Lanczos, the following convergence condition holds:
    \begin{equation} \label{eq:KW5}
      \lambdamax(\bH) - \ximax(\bH,b,k) \le \bar{\eps}(\lambdamax(\bH)-\lambdamin(\bH)) 
      \quad \mbox{with probability at least $1-\delta$,}
    \end{equation}
  provided $k$ satisfies
    \begin{equation} \label{eq:KW6}
     k=n \quad \mbox{or} \quad 1.648 \sqrt{n} \exp \left( -\sqrt{\bar{\eps}} (2k-1) \right) \le \delta.
      \end{equation}
  A sufficient condition for \eqref{eq:KW5} thus is
    \begin{equation} \label{eq:KW7}
      k \ge \min\left \{n,1+\left\lceil \frac{1}{4 \sqrt{\bar{\eps}}} \ln (2.75 n/\delta^2) \right\rceil
      \right\}.
    \end{equation}
    Similarly, we have that
    \begin{equation} \label{eq:KW8}
      \ximin(\bH,b,k)-\lambdamin(\bH) \le \bar{\eps}(\lambdamax(\bH)-\lambdamin(\bH)) 
      \quad \mbox{with probability at least $1-\delta$}
      \end{equation}
    for $k$ satisfying the same conditions \eqref{eq:KW6} or \eqref{eq:KW7}.
\end{lemma}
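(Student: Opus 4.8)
The plan is to derive all four displayed conclusions from a single external ingredient --- the randomized-Lanczos relative-error bound for the \emph{largest} eigenvalue of a symmetric positive semidefinite matrix (Kuczy\'nski--Wo\'zniakowski~\cite{JKuczynski_HWozniakowski_1992}, Theorem~4.2(a)) --- and to transfer it to the possibly indefinite matrix $\bH$ by a translation, exploiting the shift-invariance property~\eqref{eq:shiftinvariance} of Krylov subspaces. Concretely, I would first establish~\eqref{eq:KW5}--\eqref{eq:KW6} for $\ximax$, then obtain the analogue~\eqref{eq:KW8} for $\ximin$ by repeating the argument on the reflected matrix $\lambdamax(\bH)I-\bH$, and finally read off the closed-form sufficient condition~\eqref{eq:KW7} by solving the exponential inequality in~\eqref{eq:KW6} for $k$. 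Throughout one may assume $\lambdamax(\bH)>\lambdamin(\bH)$, since otherwise both sides of~\eqref{eq:KW5} and~\eqref{eq:KW8} vanish and there is nothing to prove.

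For the core step, set $A:=\bH-\lambdamin(\bH)I$, so that $A\succeq 0$, $\lambdamax(A)=\lambdamax(\bH)-\lambdamin(\bH)>0$, and $\lambdamin(A)=0$. By~\eqref{eq:shiftinvariance}, $\cK_t(b,A)=\cK_t(b,\bH)$ for every $t$, and since the Rayleigh quotient of $A$ over this subspace is that of $\bH$ shifted by the constant $-\lambdamin(\bH)$, we get $\ximax(A,b,k)=\ximax(\bH,b,k)-\lambdamin(\bH)$. Kuczy\'nski--Wo\'zniakowski's bound applied to $A$ asserts that, with probability at least $1-1.648\sqrt n\,\exp(-\sqrt{\bar{\eps}}(2k-1))$, the relative error $(\lambdamax(A)-\ximax(A,b,k))/\lambdamax(A)$ is at most $\bar{\eps}$; substituting the two identities above turns this statement into exactly~\eqref{eq:KW5}, and the failure probability is $\le\delta$ precisely under the second alternative of~\eqref{eq:KW6}. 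The first alternative, $k=n$, is handled directly: for $b$ drawn uniformly on the sphere, $b$ has a nonzero component in every eigenspace of $\bH$ with probability one, so $\cK_{n-1}(b,\bH)$ contains an eigenvector for each distinct eigenvalue and hence $\ximax(\bH,b,n)=\lambdamax(\bH)$ almost surely, making~\eqref{eq:KW5} hold trivially. Applying this whole argument verbatim to $A':=\lambdamax(\bH)I-\bH\succeq 0$ --- for which $\cK_t(b,A')=\cK_t(b,\bH)$ and $\ximax(A',b,k)=\lambdamax(\bH)-\ximin(\bH,b,k)$ --- produces~\eqref{eq:KW8} under the identical conditions on $k$.

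Finally I would check that~\eqref{eq:KW7} implies the second alternative of~\eqref{eq:KW6}: taking logarithms, $1.648\sqrt n\,\exp(-\sqrt{\bar{\eps}}(2k-1))\le\delta$ is equivalent to $k\ge \tfrac12+\tfrac1{2\sqrt{\bar{\eps}}}\ln(1.648\sqrt n/\delta)$, and writing $\ln(1.648\sqrt n/\delta)=\tfrac12\ln(1.648^2 n/\delta^2)\le\tfrac12\ln(2.75 n/\delta^2)$ shows this is implied by $k\ge 1+\lceil \tfrac1{4\sqrt{\bar{\eps}}}\ln(2.75 n/\delta^2)\rceil$; intersecting with the alternative $k=n$ gives~\eqref{eq:KW7}. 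The only delicate points I anticipate are (i) aligning the translation and the Rayleigh-quotient identities exactly with Kuczy\'nski--Wo\'zniakowski's normalization (relative error measured against the largest eigenvalue of a positive semidefinite matrix --- if one insists on strict positive definiteness one can perturb $A$ by $\mu I$ and let $\mu\downarrow 0$), and (ii) the measure-zero argument that disposes of the exhaustive case $k=n$; once these are pinned down, the remaining manipulations are routine.
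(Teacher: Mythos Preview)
Your proposal is correct and follows precisely the approach the paper indicates: the paper does not give a detailed proof, stating only that the lemma is ``a direct consequence of Theorem~4.2(a) from the cited paper, along with the shift invariance property,'' and your argument fills in exactly those details (the shift $A=\bH-\lambdamin(\bH)I$ for~\eqref{eq:KW5}, the reflection $A'=\lambdamax(\bH)I-\bH$ for~\eqref{eq:KW8}, and the logarithmic manipulation yielding~\eqref{eq:KW7}). The two minor caveats you flag --- extending Kuczy\'nski--Wo\'zniakowski's bound to the semidefinite case by continuity and the almost-sure exhaustiveness at $k=n$ --- are both standard and correctly handled.
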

}

\subsection{\refer{Lanczos and Conjugate Gradient as Minimum Eigenvalue Oracles}}
\label{subapp:lacgwithM}

\refer{ Lemma~\ref{lemma:randlanczos} implies that using the Lanczos
  algorithm to generate the minimum eigenvalue of $\bH$ from
  \eqref{eq:eigvalsKk.min} represents an instance of
  Procedure~\ref{alg:meo} satisfying Assumption~\ref{assum:wccmeo}.
  The sequence of iterates $\{z_t\}$ given by $z_0=b$ and
\begin{equation} \label{eq:lanc.obj}
z_{t+1}\, \refer{\in}\, \arg\min_z \, \frac12 z^\top \bH z \quad \mbox{subject to $\|z\|_2=1$,
   $z \in \cK_t(b,\bH)$, for $t=0,1,\dotsc$}
\end{equation}
 eventually yields a direction of sufficient negative curvature, when
 such a direction exists.}

Consider now Algorithm~\ref{alg:cg} applied to $\bH$, and $g=-b$.  By
Property 2 of Lemma~\ref{lemma:propertiesCGoneiter}, we can
see that if Algorithm~\ref{alg:cg} does not terminate with $j \le t$,
for some given index $t$, then $y_{t+1}$, $r_{t+1}$, and $p_{t+1}$ are
computed, and we have
\begin{equation} \label{eq:CGspanKspace}
	\lspan\{p_0,\dotsc,p_i\} = \lspan\{r_0,\dotsc,r_i\} 
	= \cK_{i}(b,\bH), \quad \mbox{for $i=0,1,\dotsc,t$,}
\end{equation}
because $\{r_{\ell}\}_{\ell=0}^{i}$ is a set of $i+1$ orthogonal
vectors in $\cK_{i}(b,\bH)$. Thus $\{p_0,\dotsc,p_i\}$,
$\{r_0,\dotsc,r_i\}$, and $\{ b, \bH b, \dotsc, \bH^i b \}$ are all
bases for $\cK_{i}(b,\bH)$, $i=0,1,\dotsc,t$.
As long as they are computed, the iterates of
Algorithm~\ref{alg:cg} satisfy
\begin{equation} \label{eq:CGsol}
y_{t+1} := \arg\min_y \, \frac12 y^\top \bH y - b^\top y \quad \mbox{subject to    
  $y \in \cK_{t}(b,\bH)$, for $t=0,1,\dotsc$.}
\end{equation}
\refer{ The sequences defined by~\eqref{eq:lanc.obj} (for Lanczos)
  and~\eqref{eq:CGsol} (for CG) are related via the Krylov subspaces.}
We have the following result about the number of iterations required
by CG to detect non-positive-definiteness.
\begin{theorem} \label{th:lcg}
  Consider applying Algorithm~\ref{alg:cg} to the quadratic function
  \eqref{eq:quadraticCG}, with $g=-b$ for some $b$ with $\|b\| = 1$.
  Let $\JB$ be the smallest value of $t \ge 0$ such that
  $\cK_t(b,\bH)$ contains a direction of nonpositive curvature, so
  that $\JB$ is also the smallest index $t \ge 0$ such that $z_{t+1}^\top
  \bH z_{t+1} \le 0$, where $\{z_j \}$ are the Lanczos iterates from
  \eqref{eq:lanc.obj}. Then Algorithm~\ref{alg:cg} terminates with
  $j=\JB$, with $p_\JB^\top \bH p_\JB \le 0$.
\end{theorem}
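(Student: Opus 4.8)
The plan is to prove by induction on the iteration index that CG runs through iterations $j=0,1,\dots,\JB-1$ without the CG loop exiting, and then that the loop exits precisely at $j=\JB$ because the curvature test $p_j^\top\bH p_j>0$ fails. Two facts established above do most of the work: the span identity \eqref{eq:CGspanKspace}, which guarantees $p_i\in\cK_i(b,\bH)$ and $\lspan\{p_0,\dots,p_i\}=\cK_i(b,\bH)$ at every index CG reaches, and the conjugacy and residual-orthogonality relations of Lemma~\ref{lemma:propertiesCGoneiter} (Properties~2, 3, and 5). I would also record the elementary consequence of the definition of $\JB$ that for each $t<\JB$ the matrix $\bH$ is positive definite on $\cK_t(b,\bH)$, since that subspace contains no nonzero $w$ with $w^\top\bH w\le 0$. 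The case $\JB=0$ can be dispatched at once: then $\cK_0(b,\bH)=\lspan\{b\}$ contains a nonpositive-curvature direction, which being one-dimensional forces $b^\top\bH b\le 0$; since $p_0=-r_0=-g=b$ and $r_0=-b\ne 0$, the loop exits immediately with $j=0$ and $p_0^\top\bH p_0\le 0$.

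So suppose $\JB\ge 1$. The inductive claim is that for each $j=0,1,\dots,\JB-1$, CG reaches iteration $j$ with $r_j\ne 0$ and $p_j^\top\bH p_j>0$, and hence completes that iteration. The step I expect to be the main obstacle is excluding premature termination through $r_j=0$, which positive definiteness on the Krylov subspaces does not by itself rule out. The remedy is a Krylov ``stalling'' argument: if CG reaches an iteration $j$ with $1\le j\le\JB$ and $r_j=0$, then $b=-g=\bH y_j$ with $y_j\in\cK_{j-1}(b,\bH)$, so $b\in\bH\,\cK_{j-1}(b,\bH)$ and therefore $\cK_j(b,\bH)=\bH\,\cK_{j-1}(b,\bH)=\cK_{j-1}(b,\bH)$; this subspace is then $\bH$-invariant, so $\cK_t(b,\bH)=\cK_{j-1}(b,\bH)$ for all $t\ge j-1$, and in particular $\cK_{\JB}(b,\bH)=\cK_{j-1}(b,\bH)$. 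This is impossible, since $\cK_{\JB}(b,\bH)$ contains a direction of nonpositive curvature whereas $\cK_{j-1}(b,\bH)$ does not (as $j-1<\JB$). Applying this with $j\le\JB-1$ yields $r_j\ne 0$, whence $p_j\ne 0$ by Property~3, and $p_j^\top\bH p_j>0$ because $p_j\in\cK_j(b,\bH)\subseteq\cK_{\JB-1}(b,\bH)$; the base case $j=0$ is the same, using $r_0=-b\ne 0$. A by-product is that $r_0,\dots,r_{\JB-1}$ are $\JB$ nonzero pairwise orthogonal vectors, so $\dim\cK_{\JB-1}(b,\bH)=\JB$ and $\{p_0,\dots,p_{\JB-1}\}$ is an $\bH$-conjugate basis of it.

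It then remains to examine iteration $\JB$, now known to be reached. The stalling argument with $j=\JB$ gives $r_{\JB}\ne 0$, so by \eqref{eq:CGspanKspace} the set $\{p_0,\dots,p_{\JB}\}$ is a basis of $\cK_{\JB}(b,\bH)$, which has dimension $\JB+1$. Taking any nonzero $w\in\cK_{\JB}(b,\bH)$ with $w^\top\bH w\le 0$ and writing $w=u+c\,p_{\JB}$ with $u\in\lspan\{p_0,\dots,p_{\JB-1}\}=\cK_{\JB-1}(b,\bH)$ and $c\in\R$, conjugacy (Property~5) gives $p_{\JB}^\top\bH u=0$, hence $0\ge w^\top\bH w=u^\top\bH u+c^2\,p_{\JB}^\top\bH p_{\JB}$. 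Since $u^\top\bH u\ge 0$ by positive definiteness of $\bH$ on $\cK_{\JB-1}(b,\bH)$, and $c\ne 0$ (otherwise $w=u$ would be a nonzero nonpositive-curvature vector inside $\cK_{\JB-1}(b,\bH)$, contradicting that $\JB$ is the smallest index with this property), it follows that $p_{\JB}^\top\bH p_{\JB}\le -u^\top\bH u/c^2\le 0$. Thus the CG loop exits at $j=\JB$, and since $r_{\JB}\ne 0$ it does so exactly because $p_{\JB}^\top\bH p_{\JB}\le 0$, which is the assertion.
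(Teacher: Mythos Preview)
Your proof is correct and uses the same essential ingredients as the paper's: the Krylov ``stalling'' argument to rule out early termination via $r_j=0$, the observation that $p_j\in\cK_j(b,\bH)$ with $\bH$ positive definite on $\cK_t$ for $t<\JB$, and the $\bH$-conjugacy of the $p_j$'s to force $p_{\JB}^\top\bH p_{\JB}\le 0$. The paper organizes the argument as a three-case contradiction (terminating too early with $r_t=0$; terminating too late at some $t>\JB$; terminating too early with $p_j^\top\bH p_j\le 0$), whereas you run a forward induction showing CG survives through $j=\JB-1$ and then analyze iteration $\JB$ directly. Your stalling argument is also phrased more abstractly (showing $\cK_{j-1}$ is $\bH$-invariant) while the paper writes out the explicit linear combination for $r_t$ and argues a leading coefficient is nonzero; and at the final step you decompose $w=u+cp_{\JB}$ and bound directly, while the paper expands $z=\sum_j\gamma_j p_j$ and derives a contradiction from the full conjugate sum. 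These are organizational rather than substantive differences.
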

\begin{proof}
  We consider first the case of $\JB=0$. Then $z_1 = b / \|b\|$ and
  $b^\top \bH b \le 0$, so since $p_0=-r_0=b$, we have $p_0^\top \bH p_0 \le
  0$, so the result holds in this case. We assume that $\JB \ge 1$ for
  the remainder of the proof.
  
  Suppose first that Algorithm~\ref{alg:cg} terminates with $j=t$, for
  some $t$ satisfying $1 \le t \le \JB$, because of a zero residual
  --- $r_t=0$ --- without having encountered nonpositive curvature.
  In that case, we can show that 
  $\bH^t b \in \lspan\{b,\dotsc,\bH^{t-1}b\}$.

     We can invoke~\eqref{eq:CGspanKspace} with $t$ replaced by $t-1$ 
     since Algorithm~\ref{alg:cg} has not terminated at iteration $t-1$. 
     By the recursive definition of $r_{t-1}$ 
     within Algorithm~\ref{alg:cg}, 
     there exist coefficients $\tau_i$ and
     $\sigma_i$ such that
     \[
     r_{t-1} = -b + \sum_{i=1}^{t-1} \tau_i \bH^i b, \quad
     p_{t-1} = \sum_{i=0}^{t-1} \sigma_i \bH^i b.
     \]
     Since $r_t=0$, we have again from Algorithm~\ref{alg:cg} that
     \begin{equation} \label{eq:uh7}
     0 = r_{t-1} + \alpha_{t-1} \bH p_{t-1} = -b + \sum_{i=1}^{t-1} (\tau_i + \alpha_{t-1} \sigma_{i-1}) \bH^i b +  \alpha_{t-1} \sigma_{t-1} \bH^t b.
     \end{equation}
     The coefficient $\alpha_{t-1} \sigma_{t-1}$ must be nonzero,
     because otherwise this expression would represent a nontrivial
     linear combination of the basis elements $\{ b, \bH b, \dotsc,
     \bH^{t-1} b \}$ of $\cK_{t-1}(b,\bH)$ that is equal to zero. It
     follows from this observation and \eqref{eq:uh7} that $\bH^t b \in \lspan \{b, \bH b,
     \dotsc, \bH^{t-1} b \} = \cK_{t-1}(b,\bH)$, as required. 
   

  Consequently,
  \[
	\cK_t(b,\bH) = \lspan\{b,\bH b,\dotsc,\bH^t b\} =
	\lspan\{b,\dotsc,\bH^{t-1}b\} = \cK_{t-1}(b,\bH).
  \] 
  By using a recursive argument on the definition of $\cK_i(b,\bH)$
  for $i=t,\dotsc,\JB$, we arrive at $\cK_{t-1}(b,\bH) =
  \cK_{\JB}(b,\bH)$. Thus there is a value of $t$ smaller than $\JB$
  such that $\cK_{t}(b,\bH)$ contains a direction of nonpositive
  curvature, contradicting our definition of $\JB$. Thus we cannot
  have termination of Algorithm~\ref{alg:cg} with $j \le \JB$ unless
  $p_j^\top \bH p_j \le 0$.

  Suppose next that CG terminates with $j=t$ for some $t >\JB$. It
  follows that $p_j^\top \bH p_j > 0$ for all $j=0,1,\dotsc,\JB$.  By
  definition of $\JB$,  there is a nonzero vector $z \in
  \cK_{\JB}(b,\bH)$ such that $z^\top \bH z \le 0$. On the other hand,
  we have $\cK_{\JB}(b,\bH) = \lspan\{p_0,p_1,\dotsc,p_{\JB} \}$
  by~\eqref{eq:CGspanKspace}, thus we can write $z = \sum_{j=0}^{\JB}
  \gamma_j p_j$, for some scalars $\gamma_j$, $j=0,1,\dotsc,\JB$. By
  Property 5 of Lemma~\ref{lemma:propertiesCGoneiter}, we then have
\[
0 \ge z^\top \bH z = \sum_{j=0}^{\JB} \gamma_j^2 p_j^\top \bH p_j.
\]
Since $p_{j}^\top \bH p_{j}>0$ for every $j=0,1,\dotsc,\JB$, and not
all $\gamma_j$ can be zero (because $z \ne 0$), the final summation is
strictly positive, a contradiction.

Suppose now that CG terminates at some $j<\JB$. Then $p_j^\top \bH p_j
\le 0$, and since $p_j \in \cK_j(b,\bH)$, it follows that
$\cK_j(b,\bH)$ contains a direction of nonpositive curvature,
contradicting the definition of $\JB$.

We conclude that Algorithm~\ref{alg:cg} must terminate with $j=\JB$
and $p_\JB^\top \bH p_\JB \le 0$, as claimed.
\end{proof}

Theorem~\ref{th:lcg} is a generic result that does not require $b$ to
be chosen randomly. It does not guarantee that Lanczos will detect
nonpositive curvature in $\bH$ whenever present, because $b$ could be
orthogonal to the subspace corresponding to the nonpositive curvature,
so the Lanczos subspace never intersects with the subspace of negative
curvature. When $b$ is chosen randomly from a uniform distribution
over the unit ball, however, we can certify the performance of
Lanczos, as we have shown in Lemma\refer{~\ref{lemma:randlanczos}
based on Lemma~\ref{lem:KW4.2} above}.  We can exploit
Theorem~\ref{th:lcg} to obtain the same performance for CG, as stated
in Theorem~\ref{theo:randcg}. We restate this result as a corollary,
and prove it now.
\begin{corollary} \label{cor:cgnegcurv}
Let $b$ be distributed uniformly on the unit ball and $H$ be a 
symmetric $n$-by-$n$ matrix, with $\| H \| \le M$. 
Given $\delta \in [0,1)$, define
\begin{equation} \label{eq:lanbound}
\JBB := \min \left\{n, 1+\left\lceil \frac{\ln(2.75n/\delta^2)}{2}
\sqrt{\frac{M}{\epsilon}} \right\rceil \right\}.
\end{equation}
Consider applying Algorithm~\ref{alg:cg} with $\bH
:= H + \frac12 \epsilon I$ and $g=-b$. Then, the following properties hold:
\begin{itemize}
\item[(a)] If $\lambdamin(H) < -\eps$, then with probability at least
  $1-\delta$, there is some index $j \le \JBB$ such that
  Algorithm~\ref{alg:cg} terminates with a direction $p_j$ such that
  $p_j^\top H p_j \le -(\eps/2) \|p_j\|^2$.
\item[(b)] if Algorithm~\ref{alg:cg} runs for $\JBB$
  iterations without terminating, then with probability at least
  $1-\delta$, we have that $\lambdamin(H) \ge -\eps$.
\end{itemize}
\end{corollary}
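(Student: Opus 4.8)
The plan is to assemble the corollary from two facts that are already in place: the deterministic identity of Theorem~\ref{th:lcg}, which says that Algorithm~\ref{alg:cg} applied to $\bH := H + \tfrac12\eps I$ with $g=-b$ terminates at exactly the first index $\JB$ for which the Krylov space $\cK_{\JB}(b,\bH)$ contains a nonpositive-curvature direction for $\bH$ (and does so through the test $p_{\JB}^\top\bH p_{\JB}\le 0$), and Lemma~\ref{lemma:randlanczos}, which controls how quickly a randomized Krylov space captures the small eigenvalues. The shift-invariance \eqref{eq:shiftinvariance} lets me identify $\cK_t(b,\bH)$ with $\cK_t(b,H)$ throughout, so an eigenvalue estimate for $H$ transfers directly into curvature information about $\bH$.

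For (a), I would assume $\lambdamin(H)<-\eps$ and apply Lemma~\ref{lemma:randlanczos} to $H$, with the given bound $M$ and failure probability $\delta$; the iteration bound \eqref{eq:rlanc} of that lemma is precisely the quantity $\JBB$ of \eqref{eq:lanbound}. With probability at least $1-\delta$, after $k:=\JBB$ iterations the estimate $\ximin(H,b,k)$ satisfies $\ximin(H,b,k)\le\lambdamin(H)+\eps/2<-\eps/2$, so the minimizing unit vector $z\in\cK_k(b,H)=\cK_k(b,\bH)$ has $z^\top H z<-\eps/2$, whence $z^\top\bH z = z^\top H z+\tfrac12\eps<0$. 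Thus $\cK_k(b,\bH)$ contains a nonpositive-curvature direction for $\bH$, so the index $\JB$ of Theorem~\ref{th:lcg} satisfies $\JB\le k=\JBB$, and that theorem gives termination of Algorithm~\ref{alg:cg} at $j=\JB\le\JBB$ with $p_{\JB}^\top\bH p_{\JB}\le 0$, i.e. $p_{\JB}^\top H p_{\JB}\le-(\eps/2)\|p_{\JB}\|^2$, which is the claim. (Drawing $b$ from the unit ball rather than the unit sphere is immaterial: rescaling $b$ changes neither the Krylov spaces nor the signs of the $p_j^\top\bH p_j$, so Lemma~\ref{lemma:randlanczos} still applies after normalization.)

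For (b), the point to keep in mind is that $\lambdamin(H)$ is a deterministic quantity, so I would case-split on it. If $\lambdamin(H)\ge-\eps$ there is nothing to prove. If $\lambdamin(H)<-\eps$, then part (a) shows that Algorithm~\ref{alg:cg} detects nonpositive curvature, hence terminates, at some $j\le\JBB$ with probability at least $1-\delta$; equivalently, the event that Algorithm~\ref{alg:cg} runs $\JBB$ iterations without terminating has probability at most $\delta$. So whenever that event is observed, $\lambdamin(H)\ge-\eps$ can fail only on a set of probability at most $\delta$, which is the assertion.

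I do not anticipate a genuine obstacle here, since all the analytic work lives in Theorem~\ref{th:lcg} and Lemma~\ref{lemma:randlanczos}; the steps that need care are essentially organizational — verifying that \eqref{eq:lanbound} coincides with \eqref{eq:rlanc}, converting the strict bound on $\ximin(H,b,k)$ into strict negative curvature for the shifted matrix $\bH$ via \eqref{eq:shiftinvariance}, and, in (b), being explicit that the only randomness is the draw of $b$ (with $H$, and hence $\lambdamin(H)$, fixed), so that the probability statements refer to that draw alone.
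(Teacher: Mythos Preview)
Your proposal is correct and follows essentially the same route as the paper: shift-invariance \eqref{eq:shiftinvariance} to identify the Krylov spaces, Lemma~\ref{lemma:randlanczos} to produce (with probability $1-\delta$) a unit vector of curvature below $-\eps/2$ for $H$ inside $\cK_{\JBB}(b,\bH)$, and Theorem~\ref{th:lcg} to translate that into CG termination with $p_j^\top \bH p_j\le 0$. Your treatment of (b) via a case-split on $\lambdamin(H)$ and the contrapositive of (a) is a slight repackaging of the paper's argument (which re-invokes Theorem~\ref{th:lcg} to say the Krylov space has no nonpositive-curvature direction and then appeals to Lemma~\ref{lemma:randlanczos}), but the content is the same.
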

\begin{proof}
\refer{We will again exploit the invariance of the Krylov subspaces to 
linear shifts given by~\eqref{eq:shiftinvariance}.} This allows us 
to make inferences about the behavior of Algorithm~\ref{alg:cg} 
applied to $\bH$ from the behavior of the
Lanczos method applied to $H$, which has been described in
Lemma~\ref{lemma:randlanczos}.

Suppose first that $\lambdamin(H) < - \eps$. By
Lemma~\ref{lemma:randlanczos}, we know that with probability at least
$1-\delta$, the Lanczos procedure returns a vector $v$ such that
$\|v\|=1$ and $v^\top Hv \le -(\eps/2)$ after at most $\JBB$ iterations.
Thus, for some $j \le \JBB$, we have $v \in \cK_j(b,H) =
\cK_j(b,\bH)$, and moreover $v^\top \bH v \le 0$ by definition of $\bH$,
so the Krylov subspace $\cK_j(b,\bH)$ contains directions of
nonpositive curvature, for some $j \le \JBB$. It then follows from
Theorem~\ref{th:lcg} that $p_j^\top \bH p_j \le 0$ for some $j \le
\JBB$. To summarize: If $\lambdamin(H) < - \eps$, then with
probability $1-\delta$, Algorithm~\ref{alg:cg} applied to $\bH$ and
$g=-b$ will terminate with some $p_j$ such that $p_j^\top \bH p_j \le
0$ for some $j$ with $j \le \JBB$. The proof of (a) is complete. 

Suppose now that Algorithm~\ref{alg:cg} applied to $\bH$ and $g=-b$
runs for $\JBB$ iterations without terminating, that is $p_j^\top \bH
p_j >0$ for $j=0,1,\dotsc,\JBB$. It follows from the logic of
Theorem~\ref{th:lcg} that $\cK_{\JBB}(b,\bH)$ contains no directions
of nonpositive curvature for $\bH$.  Equivalently, there is no
direction of curvature less than $-\eps/2$ for $H$ in
$\cK_{\JBB}(b,H)$.  By Lemma~\ref{lemma:randlanczos}, this certifies
with probability at least $1-\delta$ that $\lambdamin(H) \ge -\eps$,
establishing (b).
\end{proof}


\subsection{\refer{Randomized Lanczos with Internal Estimation of a Bound on $\|H\|$}}
\label{subapp:lacgwithoutM}

The methods discussed in Section~\ref{subapp:lacgwithM} assume
knowledge of an upper bound on the considered matrix, denoted by $M$.
When no such bound is known, we show here that it is possible to
estimate it within the Lanczos procedure. Algorithm~\ref{alg:lanadapt}
details the method; we show that it can be used as an instance of
Procedure~\ref{alg:meo} satisfying Assumption~\ref{assum:wccmeo} when
the optional parameter $M$ is not provided.

Algorithm~\ref{alg:lanadapt} consists in applying the Lanczos method
on $H$ starting with a random vector $b$.  We first run Lanczos for
$j_M$ iterations, where $j_M$ does not depend on any estimate on the
minimum or maximum eigenvalue and instead targets a fixed
accuracy.
After this initial phase of $j_M$ iterations, we have
approximations of the extreme eigenvalues $\ximax(H, b, j_M)$ and
$\ximin(H, b, j_M)$ from \eqref{eq:eigvalsKk}. An estimate $M$ of
$\|H\|$ is then given by:
\begin{equation} \label{eq:estimateMLanczoskM}
	M = 2 \max \left\{ |\ximax(H, b, j_M)|, 
	|\ximin(H, b, j_M)| \right\} .
\end{equation}
We show below that $\|H\| \leq M \le 2\|H\|$, with high
probability. This value can then be used together with a tolerance
$\epsilon$ to define a new iteration limit for the Lanczos
method. After this new iteration limit is reached, we can either
produce a direction of curvature at most $-\epsilon/2$, or certify
with high probability that $\lambda_{\min}(H) \succeq -\epsilon I$ ---
the desired outcomes for Procedure~\ref{alg:meo}.

\begin{algorithm}[ht!]
\caption{Lanczos Method with Upper Bound Estimation}
\label{alg:lanadapt}
\begin{algorithmic}
  \STATE \emph{Inputs:} Symmetric matrix $H \in \R^{n \times n}$, 
  tolerance $\epsilon>0$.
  \STATE \emph{Internal parameters:} probability $\delta \in [0, 1)$, 
  vector $b$ uniformly distributed on the unit sphere.
  \STATE \emph{Outputs:} Estimate $\lambda$ of $\lambdamin(H)$
  such that $\lambda \le - \epsilon/2$, and vector $v$
  with $\|v\|=1$ such that $v^\top H v =\lambda$ OR
  certificate that $\lambdamin(H) \ge -\epsilon$. If the
  certificate is output, it is false with probability $\delta$.
  \STATE Set $j_M = \min \left\{n, 1+\ceil*{\frac12 \ln(25n/\delta^2)}\right\}$.
  \STATE Perform $j_M$ iterations of Lanczos starting from $b$.
  \STATE Compute $\ximin(H,b,j_M)$ and $\ximax(H,b,j_M)$, and set $M$ according 
  to~\eqref{eq:estimateMLanczoskM}.
  \STATE Set $j_{\mathrm{total}} = \min \left\{j_M, 1+
  \ceil*{\frac12  \ln(25n/\delta^2)\sqrt{\frac{M}{\epsilon}}} \right\}$. 
  \STATE Perform $\max\{0,j_{\mathrm{total}}-j_M\}$ additional iterations of Lanczos.
  \STATE Compute $\ximin(H,b,j_{\mathrm{total}})$.
  \IF{$\ximin(H, b, j_{\mathrm{total}}) \leq -\epsilon/2$}
  \STATE \emph{Output} $\lambda = \ximin(H, b, j_{\mathrm{total}})$ and a unit vector
  $v$ such that $v^\top H v = \lambda$.
  \ELSE
  \STATE \emph{Output} $\lambda = \ximin(H, b, j_{\mathrm{total}})$ as a certificate that
  $\lambdamin(H) \geq - \epsilon$. This certificate is false with probability $\delta$.
  \ENDIF
\end{algorithmic}
\end{algorithm}

Algorithm~\ref{alg:lanadapt} could be terminated
  earlier, in fewer than $j_{\mathrm{total}}$ iterations, when a
  direction of sufficient negative is encountered. For simplicity, we
  do not consider this feature, but observe that it would not affect
  the guarantees of the method, described in Lemma~\ref{lem:adaptM}
  below.
\begin{lemma} \label{lem:adaptM}
Consider Algorithm~\ref{alg:lanadapt} with input parameters $H$ and
$b$, and internal parameter $\delta$.  The method outputs a value
$\lambda$ such that
\begin{equation} \label{eq:adaptM1}
\lambda \leq \lambdamin(H) + \frac{\epsilon}{2}
\end{equation}
in at most 
\begin{equation} \label{eq:boundktotalUH}
\min \left\{n, 1+\max\left\{\ceil*{\frac12 \ln(25n/\delta^2)}, 
\ceil*{\frac12 \ln(25n/\delta^2)\sqrt{\frac{2\|H\|}{\epsilon}}} \right\} \right\}
\end{equation}
matrix-vector multiplications by $H$, with probability at least $1 -
\delta$.
\end{lemma}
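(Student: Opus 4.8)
The plan is to analyze the two phases of Algorithm~\ref{alg:lanadapt} separately: the first $j_M$ iterations, which produce the estimate $M$ from~\eqref{eq:estimateMLanczoskM}, and the subsequent iterations up to $j_{\mathrm{total}}$, which sharpen the estimate of $\lambdamin(H)$ (the same random starting vector $b$ is used throughout). I would first establish that $M$ is a good proxy for $\|H\|$: the bound $M\le 2\|H\|$ holds deterministically, since $\ximax(H,b,j_M)$ and $\ximin(H,b,j_M)$ in~\eqref{eq:eigvalsKk} are Rayleigh quotients of $H$ and hence lie in $[\lambdamin(H),\lambdamax(H)]\subseteq[-\|H\|,\|H\|]$, so $M=2\max\{|\ximax(H,b,j_M)|,|\ximin(H,b,j_M)|\}\le 2\|H\|$; and $M\ge\|H\|$ holds with probability at least $1-2\delta/3$. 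For the latter, I apply Lemma~\ref{lem:KW4.2} with $\bar{\eps}=1/4$ and failure probability $\delta/3$ to each of~\eqref{eq:KW5} and~\eqref{eq:KW8}; since $2.75/(\delta/3)^2=24.75/\delta^2<25/\delta^2$, the definition of $j_M$ meets requirement~\eqref{eq:KW7}, so with probability at least $1-\delta/3$ each we have $\lambdamax(H)-\ximax(H,b,j_M)\le\frac14(\lambdamax(H)-\lambdamin(H))$ and $\ximin(H,b,j_M)-\lambdamin(H)\le\frac14(\lambdamax(H)-\lambdamin(H))$. A short case split---according to whether $\|H\|=\lambdamax(H)$ or $\|H\|=-\lambdamin(H)$, using $\lambdamax(H)-\lambdamin(H)\le 2\|H\|$---then gives $\max\{|\ximax(H,b,j_M)|,|\ximin(H,b,j_M)|\}\ge\frac12\|H\|$, i.e.\ $M\ge\|H\|$; a union bound over the two events gives probability $1-2\delta/3$.

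Next, working on the event $E:=\{\|H\|\le M\le 2\|H\|\}$, I would bound the output $\lambda=\ximin(H,b,j_{\mathrm{total}})$. If $\|H\|\le\eps/4$, then $\lambda\le\lambdamax(H)\le\|H\|\le\eps/4\le\lambdamin(H)+\eps/2$, so~\eqref{eq:adaptM1} holds deterministically. If $\|H\|>\eps/4$, set $\bar{\eps}:=\eps/(4\|H\|)\in(0,1)$, so that $\frac{1}{4\sqrt{\bar{\eps}}}=\frac12\sqrt{\|H\|/\eps}$ and the iteration count in~\eqref{eq:KW7} associated with this $\bar{\eps}$ and failure probability $\delta/3$ equals $k:=\min\{n,\,1+\ceil*{\frac12\ln(24.75n/\delta^2)\sqrt{\|H\|/\eps}}\}$. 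On $E$ we have $M\ge\|H\|$ and, since $24.75<25$, $j_{\mathrm{total}}\ge k$; because $\ximin(H,b,\cdot)$ is nonincreasing in the iteration index, \eqref{eq:KW8} at iteration $k$ gives, with probability at least $1-\delta/3$, that $\ximin(H,b,j_{\mathrm{total}})-\lambdamin(H)\le\bar{\eps}(\lambdamax(H)-\lambdamin(H))\le\frac{\eps}{4\|H\|}\cdot 2\|H\|=\eps/2$, which is~\eqref{eq:adaptM1}. Intersecting the lower-bound event for $M$ with this convergence event, and adding the failure probabilities $2\delta/3+\delta/3=\delta$, yields~\eqref{eq:adaptM1} with probability at least $1-\delta$.

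For the iteration count I note that $M\le 2\|H\|$ always, so $1+\ceil*{\frac12\ln(25n/\delta^2)\sqrt{M/\eps}}\le 1+\ceil*{\frac12\ln(25n/\delta^2)\sqrt{2\|H\|/\eps}}$ and $j_M\le 1+\ceil*{\frac12\ln(25n/\delta^2)}$; hence $j_{\mathrm{total}}$, and therefore the total number of matrix-vector multiplications by $H$ carried out by Algorithm~\ref{alg:lanadapt}, is bounded deterministically by~\eqref{eq:boundktotalUH}. Combining this with the previous paragraph completes the proof.

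The main obstacle is the coupling: the accuracy $\eps/(4M)$ that one would naturally feed into Lemma~\ref{lem:KW4.2}, as well as the iteration budget $j_{\mathrm{total}}$, both depend on the \emph{random} quantity $M$, while Lemma~\ref{lem:KW4.2} is stated for a fixed accuracy parameter. The remedy is to condition on $\{\|H\|\le M\le 2\|H\|\}$ and replace the random $\eps/(4M)$ by the deterministic $\eps/(4\|H\|)$---no larger, hence only strengthening the convergence requirement, yet still met because $j_{\mathrm{total}}$ is driven by $M\ge\|H\|$---and then to use monotonicity of $\ximin(H,b,\cdot)$ to transfer the guarantee from the ``required'' iteration count $k$ up to $j_{\mathrm{total}}$. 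Aligning the logarithmic constants, $24.75/\delta^2<25/\delta^2$, is precisely what lets the three $\delta/3$ failure budgets (two for the estimate of $M$, one for the eigenvalue refinement) fit inside the claimed $1-\delta$.
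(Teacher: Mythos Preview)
Your proposal is correct and follows essentially the same two–phase, three–way union–bound argument as the paper: establish $\|H\|\le M\le 2\|H\|$ with probability at least $1-2\delta/3$ via Lemma~\ref{lem:KW4.2} at accuracy $\bar\eps=1/4$, then invoke the Lanczos convergence guarantee with the remaining $\delta/3$ budget. Your treatment is in places tighter than the paper's---you observe that $M\le 2\|H\|$ is deterministic (the paper derives it case by case), you collapse the four sign cases into two via $\|H\|=\max\{\lambdamax(H),-\lambdamin(H)\}$, and you make the coupling explicit by fixing the deterministic target $\bar\eps=\eps/(4\|H\|)$ and appealing to monotonicity of $\ximin(H,b,\cdot)$ rather than invoking Lemma~\ref{lemma:randlanczos} with a random $M$---but these are refinements of the same argument, not a different route.
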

\begin{proof}
  We begin by showing that the first phase of
  Algorithm~\ref{alg:lanadapt} yields an accurate estimate of $\|H\|$
  with high probability. We assume that $\|H\|>0$ as the result is
  trivially true otherwise.  By setting $\delta \leftarrow \delta/3$
  and $\bar\eps = \frac14$ in Lemma~\ref{lem:KW4.2}, we obtain that
  the following inequalities hold, each with probability at least
  $1-\delta/3$:
\begin{subequations} \label{eq:ba9}
  \begin{align}
    \label{eq:ba9.1}
    \ximax (H,b,j_M) & \ge \lambdamax(H) - \tfrac14 (\lambdamax(H)-\lambdamin(H)), \\
    \label{eq:ba9.2}
\ximin (H,b,j_M) & \le \lambdamin(H) + \tfrac14 (\lambdamax(H)-\lambdamin(H)).
  \end{align}
\end{subequations}
We consider the various possibilities for $\lambdamin(H)$ and
$\lambdamax(H)$ separately, showing in each case that $M$ defined by
\eqref{eq:estimateMLanczoskM} has $\|H\| \le M \le 2 \|H\|$.
\begin{itemize}
  \item When $\lambdamax(H) \ge \lambdamin(H) \ge 0$, we have
    $\ximax(H,b,j_M) \ge \frac34 \lambdamin(H)$ and $0 \le
    \ximin(H,b,j_M) \le \ximax(H,b,j_M)$, so that
    \begin{align*}
    M &= 2 \ximax(H,b,j_M) \ge \frac32 \lambdamax(H) = \frac32\|H\|, \\
    M &= 2 \ximax(H,b,j_M) \le 2 \lambdamax(H) = 2 \|H\|,
    \end{align*}
    as required.
    \item When $\lambdamin(H) \le \lambdamax(H) \le 0$, we have
      $\ximin(H,b,j_M) \le \frac34 \lambdamin(H) \le 0$ and
      $\ximin(H,b,j_M) \le \ximax(H,b,j_M) \le 0$, so that
      \begin{align*}
      M &= 2 | \ximin(H,b,j_M)| \ge \frac32  | \lambdamin(H)| = \frac32 \|H\|, \\
      M &= 2 | \ximin(H,b,j_M)| \le 2 | \lambdamin(H)| = 2 \|H\|,
      \end{align*}
      as required.
      \item When $\lambdamin(H) \le 0 \le \lambdamax(H)$ and
        $-\lambdamin(H) \le \lambdamax(H)$, we have $\lambdamax(H)-\lambdamin(H) \le 2 \lambdamax(H)$, so from \eqref{eq:ba9.1}, it follows that
        $\ximax(H,b,j_M) \ge \frac12 \lambdamax(H) = \frac12 \|H\|$, and so
        \begin{align*}
          M &\ge 2 \ximax(H,b,j_M) \ge \|H\|, \\
          M & = 2 \max\left\{ |\ximax(H, b, j_M)|, 
	|\ximin(H, b, j_M)| \right\}  \le 2 \max\left\{ |\lambdamax(H)|,|\lambdamin(H)| \right\} = 2 \|H\|,
        \end{align*}
        as required.
        \item When $\lambdamin(H) \le 0 \le \lambdamax(H)$ and
          $-\lambdamin(H) \ge \lambdamax(H)$, we have $\lambdamax(H) - \lambdamin(H) \le -2 \lambdamin(H)$, so from \eqref{eq:ba9.2}, it follows that
          $\ximin(H,b,j_M) \le \frac12 \lambdamin(H) \le 0$, and so
          \begin{align*}
            M &\ge 2 |\ximin(H,b,j_M)| \ge | \lambdamin(H)| = \|H\|, \\
            M & =2 \max\left\{ |\ximax(H, b, j_M)|, 
	|\ximin(H, b, j_M)| \right\}  \le 2 \max\left\{ |\lambdamax(H)|,|\lambdamin(H)| \right\} = 2 \|H\|,
        \end{align*}
          as required.
  \end{itemize}
Since each of the bounds in \eqref{eq:ba9} holds with probability at
least $1-\delta/3$, both hold with probability at least $1-2\delta/3$,
by a union bound argument. 
 

We finally consider the complete run of Algorithm~\ref{alg:lanadapt},
which requires $j_{\mathrm{total}}$ iterations of Lanczos.  If our
estimate $M$ is accurate, we have by setting $\delta \leftarrow
\delta/3$ and $M \leftarrow 2 \|H\|$ in Lemma~\ref{lemma:randlanczos}
that $\lambda=\ximin(H,b,j_{\mathrm{total}})$ satisfies
\eqref{eq:adaptM1} with probability $1-\delta/3$. By using a union
bound to combine this probability with the probabilities of estimating
$M$ appropriately, we obtain the probability of at least $1-\delta$.

In conclusion, Algorithm~\ref{alg:lanadapt} runs $j_{\mathrm{total}}$
iterations of Lanczos (each requiring one matrix-vector multiplication
by $H$) and terminates correctly with probability at least
$1-\delta$.
\end{proof}

The lemma above shows that Algorithm~\ref{alg:lanadapt} is an instance
of Procedure~\ref{alg:meo} that does not require an a priori bound on
$\|H\|$. Assuming $\|H\| \le U_H$, we further observe that
Algorithm~\ref{alg:lanadapt} satisfies the conditions of Assumption
\ref{assum:wccmeo} with $\Cmeo =
\frac{\ln(25n/\delta^2)}{\sqrt{2}}\sqrt{U_H}$, which is within a
modest constant multiple of the one obtained for the Lanczos method
with knowledge of $\|H\|$ or $U_H$.

\section{Proof of Theorem~\ref{theo:cvCGwhilestronglycvx}} \label{app:ccg}

\begin{proof}
  The proof proceeds by contradiction: If we assume that all
  conditions specified in the statement of the theorem hold, and in
  addition that
\begin{equation} \label{eq:curvcondyKp1}
\frac{(y_{\hIcg +1}-y_i)^\top \bH (y_{\hIcg+1}-y_i)}
     {\|y_{\hIcg+1}-y_i\|^2} \ge \eps, \quad \mbox{for all
       $i=0,1,\dotsc,\hIcg-1$,}
\end{equation}
then we must have 
\begin{equation} \label{eq:cvCGwhilestronglycvx}
\|r_{\hIcg}\| \le \sqrt{T} \tau^{\hIcg/2} \|r_0\|,
\end{equation}
contradicting \eqref{eq:Kitsstronglycvx}. Note that
\begin{equation} \label{eq:casehK}
		\frac{(y_{\hIcg +1}-y_{\hIcg})^\top \bH (y_{\hIcg+1}-y_{\hIcg})}
		{\|y_{\hIcg+1}-y_{\hIcg}\|^2} = 
		\frac{\alpha_{\hIcg} p_{\hIcg}^\top \bH (\alpha_{\hIcg} p_{\hIcg})}
		{\|\alpha_{\hIcg} p_{\hIcg}\|^2} = \frac{p_{\hIcg}^\top \bH p_{\hIcg}}
		{\|p_{\hIcg}\|^2} \ge \eps
\end{equation}
by assumption, therefore we can consider \eqref{eq:curvcondyKp1} to
hold for $i=0,1,\dotsc,\hIcg$. Moreover, recalling the definition
\eqref{eq:quadraticCG} of the quadratic function $q$, we have for any
$i=0,1,\dotsc,\hIcg$ that 
\[
		q(y_{\hIcg+1}) = q(y_i) + \nabla q(y_i)^\top (y_{\hIcg+1}-y_i) + 
		\frac{1}{2}(y_{\hIcg+1}-y_i)^\top \bH (y_{\hIcg+1}-y_i).
                \]
Thus, \eqref{eq:curvcondyKp1} can be reformulated as
\begin{equation} \label{eq:stronglycvxyKp1yj}
		q(y_{\hIcg+1}) \ge q(y_i) + r_i^\top (y_{\hIcg+1}-y_i) 
		+ \frac{\eps}{2}\|y_{\hIcg+1}-y_i\|^2, \quad
                \mbox{for all $i=0,1,\dotsc,\hIcg$,}
\end{equation}
where we used $\nabla q(y_i)=r_i$ and the definitions
\eqref{eq:curvcondyKp1}, \eqref{eq:casehK} of strong convexity along
the directions $y_{\hIcg+1}-y_i$. In the remainder of the proof, and
similarly to \cite[Proof of Proposition
  1]{YCarmon_JCDuchi_OHinder_ASidford_2017b}, we will show that
\eqref{eq:stronglycvxyKp1yj} leads to the contradiction
\eqref{eq:cvCGwhilestronglycvx}, thus proving that
\eqref{eq:yKp1negcurv} holds.

We define the  sequence of functions $\Phi_j$, $j=0,1,\dotsc\hIcg$ as follows:
\[
    \Phi_0(z) := q(y_0) + \frac{\eps}{2}\|z-y_0\|^2,
    \]
    and for $j=0,\dotsc,\hIcg-1$:
\begin{equation} 
    \label{eq:formulaPhijplus1}
\Phi_{j+1}(z)  := \tau\Phi_j(z) + (1-\tau)\left( q(y_j) + r_j^\top
(z-y_j) +\frac{\eps}{2}\|z-y_j\|^2 \right).
\end{equation}
Since each $\Phi_j$ is a quadratic function with Hessian $\eps I$, it
can be written as follows:
\begin{equation} \label{eq:Phijwithvj}
		\Phi_j(z) \; = \; \Phi_j^* + \frac{\eps}{2}\|z-v_j\|^2,
\end{equation}
where $v_j$ is the unique minimizer of $\Phi_j$, and $\Phi_j^* =
\Phi_j(v_j)$ is the minimum value of $\Phi_j$. (Note that $v_0=y_0=0$
and $\Phi^*_0 = q(y_0) = 0$.)

Defining
\begin{equation} \label{eq:defpsi}
\psi(y) := q(y_0) - q(y) + \frac{\eps}{2}\|y-y_0\|^2 = \Phi_0(y) - q(y),
\end{equation}
we give a short inductive argument to establish that 
\begin{equation} \label{eq:PhiYJp1}
\Phi_j(y_{\hIcg+1}) \le q(y_{\hIcg+1}) + \tau^j \psi(y_{\hIcg+1}),
\quad j=0,1,\dotsc,\hIcg.
\end{equation}
For $j=0$, \eqref{eq:PhiYJp1} holds because $\Phi_0(y) = q(y) +
\psi(y)$ by definition. Assuming that \eqref{eq:PhiYJp1} holds for
some index $j \ge 0$, we find by first applying
\eqref{eq:formulaPhijplus1} (with $z=y_{\hIcg+1}$) and then
\eqref{eq:stronglycvxyKp1yj} (with $i=j$) that
\begin{eqnarray*}
\Phi_{j+1}(y_{\hIcg+1}) &= &\tau\Phi_j(y_{\hIcg+1}) + (1-\tau)\left( q(y_j) + 
r_j^\top (y_{\hIcg+1}-y_j) +\frac{\eps}{2}\|y_{\hIcg+1}-y_j\|^2 \right) \\
&\le &\tau\Phi_j(y_{\hIcg+1}) + (1-\tau) q(y_{\hIcg+1}).
\end{eqnarray*}
Thus, we have
\begin{align*}
\Phi_{j+1}(y_{\hIcg+1}) &\le \tau\Phi_j(y_{\hIcg+1}) + (1-\tau) q(y_{\hIcg+1}) \\
&\le \tau q(y_{\hIcg+1}) + \tau^{j+1} \psi(y_{\hIcg+1}) + 
(1-\tau) q(y_{\hIcg+1}) \quad \mbox{from \eqref{eq:PhiYJp1}} \\
&= q(y_{\hIcg+1}) + \tau^{j+1} \psi(y_{\hIcg+1}),
\end{align*}
which proves \eqref{eq:PhiYJp1} for $j+1$, and thus completes the
inductive argument.

We next prove another technical fact about the relationship between
$q(y_j)$ and $\Phi_j^*$, namely,
\begin{equation} \label{eq:468}
  q(y_j) \le \Phi_j^*, \quad j=0,1,\dotsc, \hIcg.
\end{equation}
We establish this result by an inductive argument that is quite
lengthy and technical; we note the termination of this phase of the
proof clearly below.

This result trivially holds (with equality) at $j=0$.  Supposing that
it holds for some $j =0,1,\dotsc, \hIcg-1$, we will prove that it also
holds for $j+1$.

By using Properties 7 and 8 of Lemma~\ref{lemma:propertiesCGoneiter},
\refer{and also $\|\bH r_j\| \le (M+2\eps)\|r_j\|$,} we have
\begin{equation*} 
	q(y_{j+1}) = q(y_j) - \frac{\|r_j\|^4}{2\, p_j^\top \bH p_j} \le 
	q(y_j) - \frac{\|r_j\|^4}{2\, r_j^\top \bH r_j} \le 
	q(y_j) - \frac{\|r_j\|^2}{2(M+2\eps)}.
\end{equation*}
It follows from induction hypothesis $q(y_j) \le \Phi_j^*$ that 
\begin{align} \label{eq:qyjp1Phijstar}
q(y_{j+1}) \le q(y_j) - \frac{\|r_j\|^2}{2(M+2\eps)}
&= \tau q(y_j) + (1-\tau) q(y_j) -\frac{\|r_j\|^2}{2(M+2\eps)} \nonumber \\
&\le \tau \Phi_j^* + (1-\tau) q(y_j) -\frac{\|r_j\|^2}{2(M+2\eps)}.
\end{align}
By taking the derivative in \eqref{eq:formulaPhijplus1}, and using
\eqref{eq:Phijwithvj}, we obtain
\begin{align*}
\nabla \Phi_{j+1}(z) &= \tau \nabla \Phi_j(z) + (1-\tau)\left[r_j + \eps(z-y_j)\right] \\
\Rightarrow \;\; \eps(z-v_{j+1})&=  \eps \tau (z-v_j) + (1-\tau) \left(r_j + \eps (z-y_j) \right).
\end{align*}
By rearranging the above relation (and noting  that the $z$ terms cancel out), we obtain:
\begin{equation} \label{eq:vjp1minPhijp1}
	v_{j+1}  \; = \; \tau v_j - \frac{1-\tau}{\eps}r_j + (1-\tau) y_j.
\end{equation}
It follows from this expression together with Properties 1 and 2 of
Lemma~\ref{lemma:propertiesCGoneiter} that 
\begin{equation} \label{eq:vjinKj}
v_j \in \lspan\left\{p_0,p_1,\dotsc,p_{j-1}\right\}, \quad j=1,2,\dotsc,\hIcg.
\end{equation}
(The result holds for $j=1$, from \eqref{eq:vjp1minPhijp1} we have
$v_1 \in \lspan \{v_0, r_0, y_0\} = \lspan \{r_0\} = \lspan \{p_0 \}$,
and an induction based on \eqref{eq:vjp1minPhijp1} can be used to
establish \eqref{eq:vjinKj} for the other values of $j$.)  By
combining the expressions~\eqref{eq:Phijwithvj} for $\Phi_j$,
$\Phi_{j+1}$ with the recurrence formula~\eqref{eq:formulaPhijplus1}
for $\Phi_{j+1}$, we obtain
\begin{align*} 
		\Phi_{j+1}^* + \frac{\eps}{2}\|y_j-v_{j+1}\|^2 &= \Phi_{j+1}(y_j) \\
		&= \tau\Phi_j(y_j) + (1-\tau) q(y_j) \\
                &= \tau\left(\Phi_j^* + 
		\frac{\eps}{2}\|y_j-v_j\|^2\right) + (1-\tau) q(y_j)
\end{align*}
and therefore
\begin{equation} \label{eq:doubleexpressionPhijp1}
  \Phi_{j+1}^* = \tau \left(\Phi_j^* +
  \frac{\eps}{2}\|y_j-v_j\|^2\right) + (1-\tau) q(y_j) -
  \frac{\eps}{2}\|y_j-v_{j+1}\|^2.
\end{equation}
On the other hand, we have by~\eqref{eq:vjp1minPhijp1} that
\begin{eqnarray} \label{eq:normylminusvlp1}
		\|y_j-v_{j+1}\|^2 &= &\left\| \tau (y_j-v_j) 
		+\frac{1-\tau}{\eps}r_j \right\|^2 \nonumber \\
		&= &(\tau^2 \|y_j-v_j\|^2 + \frac{(1-\tau)^2}{\eps^2}\|r_j\|^2 
		+ \frac{2}{\eps}(1-\tau)\tau r_j^\top (y_j-v_j) \nonumber \\
		&= & \tau^2 \|y_j-v_j\|^2 + \frac{(1-\tau)^2}{\eps^2}\|r_j\|^2,
\end{eqnarray}
where the last relation comes from $r_j \perp
\lspan\{p_0,\dotsc,p_{j-1}\}$ (Property 2 of
Lemma~\ref{lemma:propertiesCGoneiter}) and \eqref{eq:vjinKj} in the
case $j \ge 1$, and immediately in the case $j=0$, since $y_0=v_0=0$.
By combining \eqref{eq:doubleexpressionPhijp1}
and~\eqref{eq:normylminusvlp1}, we arrive at:
\begin{align} \label{eq:phijp1star}
		\Phi_{j+1}^* &= \tau \left(\Phi_j^* + 
		\frac{\eps}{2}\|y_j-v_j\|^2\right) + (1-\tau) q(y_j) 
		- \frac{\eps}{2}\|y_j-v_{j+1}\|^2 \nonumber \\
		&= \tau \left(\Phi_j^* + 
		\frac{\eps}{2}\|y_j-v_j\|^2\right) + (1-\tau) q(y_j) 
		-\frac{\eps}{2}\tau^2 \|y_j-v_j\|^2 - \frac{(1-\tau)^2}{2\eps} \|r_j\|^2 
		\nonumber \\
		&= \tau \Phi_j^* + \frac{\eps}{2}\left[ \tau 
		-\tau^2\right]\|y_j-v_j\|^2 + (1-\tau) q(y_j) 
		- \frac{(1-\tau)^2}{2\eps} \|r_j\|^2 \nonumber \\ 
		&= \tau \Phi_j^* + 
		\frac{\eps}{2}(1-\tau)\tau \|y_j-v_j\|^2 + (1-\tau) q(y_j) 
		- \frac{(1-\tau)^2}{2\eps} \|r_j\|^2 \nonumber \\
		&\ge \tau \Phi_j^* + (1-\tau) q(y_j) 
		- \frac{(1-\tau)^2}{2\eps} \|r_j\|^2 \nonumber \\
                &\ge q(y_{j+1}) + \frac{1}{2(M+2\eps)}\|r_j\|^2 
		- \frac{(1-\tau)^2}{2\eps} \|r_j\|^2.
\end{align}
where the last inequality comes from~\eqref{eq:qyjp1Phijstar}.  By
using the definitions of $\tau$ and $\kappa$ in
Algorithm~\ref{alg:ccg}, we have
\[
		\frac{(1-\tau)^2}{2\eps} = \frac{1}{2\eps(\sqrt{\kappa}+1)^2} \le 
		\frac{1}{2\eps\kappa} = \frac{1}{2(M+2\eps)}.
\]
It therefore follows from \eqref{eq:phijp1star} that $q(y_{j+1}) \le
\Phi_{j+1}^*$. At this point, we have shown that when $q(y_j) \le
\Phi_j^*$ for $j=0,1,\dotsc,\hIcg-1$, it follows that $q(y_{j+1}) \le
\Phi_{j+1}^*$, establishing the inductive step. As a result, our
proof of \eqref{eq:468} is complete.

By substituting $j=\hIcg$ into \eqref{eq:468}, we obtain
$q(y_{\hIcg}) \le \Phi_{\hIcg}^*$,
which in combination with \eqref{eq:PhiYJp1} with $j=\hIcg$, and the
definition \eqref{eq:Phijwithvj}, yields
\begin{equation} \label{eq:desiredecreaseformula}
q(y_{\hIcg}) -q(y_{\hIcg+1}) \le \Phi_{\hIcg}^* - q(y_{\hIcg+1}) 
\le \Phi_{\hIcg}(y_{\hIcg+1}) - q(y_{\hIcg+1})
\le \tau^{\hIcg} \psi(y_{\hIcg+1}).
\end{equation}
By substitution from \eqref{eq:defpsi}, we obtain
\begin{equation} \label{eq:funcdecreasecontrad}
q(y_{\hIcg}) - q(y_{\hIcg+1}) \; \le \; \tau^{\hIcg}
\left(q(y_0)-q(y_{\hIcg+1}) +
\frac{\eps}{2}\|y_0-y_{\hIcg+1}\|^2\right).
\end{equation}

We now depart from the analysis
of~\cite{YCarmon_JCDuchi_OHinder_ASidford_2017b}, and complete the
proof of this result by expressing \eqref{eq:funcdecreasecontrad} in
terms of residual norms.  On the left-hand side, we have
\begin{align*}
  q(y_{\hIcg}) - q(y_{\hIcg+1})
		&= r_{\hIcg+1}^\top (y_{\hIcg}-y_{\hIcg+1}) + 
		\frac{1}{2}(y_{\hIcg}-y_{\hIcg+1})^\top \bH (y_{\hIcg}-y_{\hIcg+1})  \\
		&= \frac{1}{2}(y_{\hIcg}-y_{\hIcg+1})^\top \bH (y_{\hIcg}-y_{\hIcg+1})
\end{align*}
because $r_{\hIcg+1}^\top (y_{\hIcg}-y_{\hIcg+1}) = r_{\hIcg+1}^\top
(\alpha_{\hIcg} p_{\hIcg}) = 0$ by
Lemma~\ref{lemma:propertiesCGoneiter}, Property 2.
We thus have from \eqref{eq:casehK} that
\begin{alignat}{2}
  \nonumber
  q(y_{\hIcg}) - q(y_{\hIcg+1}) &\ge \frac{\eps}{2}\|y_{\hIcg}-y_{\hIcg+1}\|^2  &&  \\
  \nonumber
  &\refer{= \frac{\eps}{2} \|\alpha_{\hIcg}p_{\hIcg}\|^2} \\
  \nonumber
  &\refer{\ge \frac{\eps}{2 (M+2\eps)^2}\|\bH(\alpha_{\hIcg}p_{\hIcg})\|^2} 
  && \mbox{(since $\|\bH p_{\hIcg}\| \le (M+2\eps)\|p_{\hIcg}\|$)} \\
  \nonumber
  &= \frac{\eps}{2 (M+2\eps)^2} \|\bH(y_{\hIcg}-y_{\hIcg+1})\|^2   \\
  \nonumber
  &= \frac{\eps}{2 (M+2\eps)^2} \|r_{\hIcg} - r_{\hIcg+1}\|^2  && \mbox{(since $r_j=g+\bH y_j$)} \\
  \nonumber
		&= \frac{\eps}{2 (M+2\eps)^2} (\|r_{\hIcg}\|^2 + \|r_{\hIcg+1}\|^2) & \quad & \mbox{(by Lemma~\ref{lemma:propertiesCGoneiter}, Property 2)} \\
  \label{eq:funcdeccontradLHS}
		& \ge \frac{\eps}{2 (M+2\eps)^2} \|r_{\hIcg}\|^2,
\end{alignat}

On the right-hand side of~\eqref{eq:funcdecreasecontrad}, because of
the strong convexity condition \eqref{eq:stronglycvxyKp1yj} at $i=0$,
we have
\begin{align*}
q(y_0)-q(y_{\hIcg+1}) + \frac{\eps}{2}\|y_0-y_{\hIcg+1}\|^2  & \le 
-\nabla q(y_0)^\top (y_{\hIcg+1}-y_0) \\
&= -r_0^\top (y_{\hIcg+1}-y_0)  \le \|r_0\|\|y_{\hIcg+1}-y_0\|.
\end{align*}
Moreover, we have 
\[
		\|y_{\hIcg+1}-y_0\| \; = \;
		\left\|\sum_{i=0}^{\hIcg} \alpha_i p_i \right\| 
		\; \le \; \sum_{i=0}^{\hIcg} \alpha_i \|p_i\| 
		\; = \; \sum_{i=0}^{\hIcg} \frac{\|r_i\|^2}{p_i^\top \bH p_i}\|p_i\|,
\]
where the last relation follows from the definition of $\alpha_i$. By
combining these last two bounds, and using Property 3 of
Lemma~\ref{lemma:propertiesCGoneiter}, we obtain
\begin{equation} \label{eq:wd12}
		q(y_0)-q(y_{\hIcg+1}) + \frac{\eps}{2}\|y_0-y_{\hIcg+1}\|^2 \; \le \; 
		\|r_0\| \sum_{i=0}^{\hIcg} \|r_i\| \frac{\|p_i\|^2}{p_i^\top \bH p_i} \;
		\le \; \|r_0\| \frac{1}{\eps} \sum_{i=0}^{\hIcg} \|r_i\|,
\end{equation}
where the last inequality holds $p_j^\top \bH p_j \ge \eps \|p_j\|^2$
for $j=0,1,\dotsc,\hIcg$ by assumption.

To bound the sum in \eqref{eq:wd12}, we recall that since
Algorithm~\ref{alg:ccg} did not terminate until iteration $\hIcg$,
\refer{the residual norms $\|r_i \|$ at all iterations
  $i=0,1,\dotsc,\hIcg-1$ must have decreased at the expected
  convergence rate. In particularly, we have $\|r_i\| \le \sqrt{T}
  \tau^{i/2} \|r_0\|$ for the {\em possibly smaller} versions of
  $\sqrt{T}$ and $\tau$ that prevailed at iteration $i$, so certainly
  $\|r_i\| \le \sqrt{T} \tau^{i/2} \|r_0\|$ for the final values of
  these parameters.} Thus for $i=0,1,\dotsc,\hIcg-1$, we have
\[
\|r_i\| \le \sqrt{T} \tau^{i/2} \|r_0\| \le \tau^{(i-\hIcg)/2}\|r_{\hIcg}\|,
\]
where we used $\|r_{\hIcg}\| \ge \sqrt{T} \tau^{\hIcg/2} \|r_0\|$
(from \eqref{eq:Kitsstronglycvx}) for the last inequality. Observing
that this bound also holds (trivially) for $i=\hIcg$, we obtain by
substituting in \eqref{eq:wd12} that
\begin{align}
  \nonumber
q(y_0)-q(y_{\hIcg+1}) + \frac{\eps}{2}\|y_0-y_{\hIcg+1}\|^2
&\le \|r_0\| \frac{1}{\eps} \sum_{i=0}^{\hIcg} \tau^{(i-\hIcg)/2} \|r_{\hIcg}\| \\
\nonumber
&\le \|r_0\| \frac{\tau^{-\hIcg/2}}{\eps}\|r_{\hIcg}\|\,\sum_{i=0}^{\hIcg} (\sqrt{\tau})^{i} \\
\label{eq:funcdeccontradRHS}
& \le \|r_0\| \frac{\tau^{-\hIcg/2}}{\eps}\|r_{\hIcg}\| \frac{1}{1-\sqrt{\tau}}.
\end{align}
Applying successively \eqref{eq:funcdeccontradLHS},
\eqref{eq:funcdecreasecontrad} and \eqref{eq:funcdeccontradRHS}
finally yields:
\begin{align*}
		\frac{\eps}{2 (M+2\eps)^2} \|r_{\hIcg}\|^2 &\le 
		 q(y_{\hIcg}) - q(y_{\hIcg+1}) \\
		 &\le \tau^{\hIcg} \left( q(y_0)-q(y_{\hIcg+1}) + \frac{\eps}{2}\|y_0-y_{\hIcg+1}\|^2 \right) \\
		 &\le \tau^{\hIcg} \|r_0\|\|r_{\hIcg}\| 
		 \frac{\tau^{-\hIcg/2}}{\eps}\frac{1}{1-\sqrt{\tau}}.
\end{align*}
After rearranging this inequality and dividing by $\|r_{\hIcg}\| > 0$,
we obtain
\begin{equation} \label{eq:finalcontrad}
		\|r_{\hIcg}\| \le \frac{2 (M+2\eps)^2}{\eps^2}\frac{\tau^{\hIcg/2}}{1-\sqrt{\tau}} \|r_0\|
		= \sqrt{T} \tau^{\hIcg/2}\|r_0\|.
\end{equation}
We have thus established \eqref{eq:cvCGwhilestronglycvx} which, as we
noted earlier, contradicts \eqref{eq:Kitsstronglycvx}. Thus
\eqref{eq:curvcondyKp1} cannot be true, so we have established
\eqref{eq:yKp1negcurv}, as required.
\end{proof}

\end{document}